\newtheorem{theorem}{Theorem}[section]
\newtheorem{lemma}[theorem]{Lemma}
\newcommand{\suchthat}{\;\ifnum\currentgrouptype=16 \middle\fi|\;}
\def\R{{\mathbb{R} }}
\renewcommand{\leq}{\leqslant}
\renewcommand{\geq}{\geqslant}
\DeclareMathOperator{\dvr}{div}
\newcommand{\tder}{\partial_t}
\newcommand{\LPND}[1]{L^{#1}_{\dvr}(\Omega)}
\newcommand{\LND}{\LPND{2}}
\newcommand{\WND}{W^{1,2}_{0,\dvr}(\Omega)}
\numberwithin{equation}{section}
\newtheorem{thm}{Theorem}
\numberwithin{thm}{section}
\newaliascnt{lemma}{thm}
\newtheorem{lem}[lemma]{Lemma}
\newaliascnt{proposition}{thm}
\newtheorem{prop}[proposition]{Proposition}
\newaliascnt{corollary}{thm}
\newaliascnt{definition}{thm}
\newtheorem{mydef}[definition]{Definition}
\newaliascnt{remark}{thm}
\newtheorem{remark}[remark]{Remark}
\newaliascnt{assumption}{thm}
\newtheorem{assumption}[assumption]{Assumption}
\def\eR{{\mathbb{R} }}
\def\eN{\mathbb{N}}
\title[Existence of weak solutions to a diffuse interface model for magnetic fluids]{Existence of weak solutions to a diffuse interface model involving magnetic fluids with unmatched densities}
\author{Martin Kalousek$^\dagger$}
\address{$^\dagger$Institute of Mathematics, Czech Academy of Sciences, \v{Z}itn\'a 25, 11567 Prague, Czech Republic}
\author{Sourav Mitra$^\ddagger$ and Anja Schl\"omerkemper$^\ddagger$}
\address{$^\ddagger$Institute of Mathematics, University of W{\"u}rzburg, Emil-Fischer-Str.\ 40, 97074 W{\"u}rzburg, Germany}
\email{kalousek@math.cas.cz}\email{sourav.mitra@mathematik.uni-wuerzburg.de}  \email{anja.schloemerkemper@mathematik.uni-wuerzburg.de}
\begin{document}

		\maketitle
		\vspace{-1cm}
		\begin{abstract}
		In this article we prove the global existence of weak solutions for a diffuse interface model in a bounded domain (both in 2D and 3D) involving incompressible magnetic fluids with unmatched densities. The model couples the incompressible Navier–Stokes equations,
 gradient flow of the magnetization vector and the
Cahn–Hilliard dynamics describing the partial mixing of two fluids. The density of the mixture depends on an order parameter and the modelling (specifically the density dependence) is inspired from Abels, Garcke and Gr\"{u}n 2011.
		\end{abstract}
		\noindent{\bf Key words}. Cahn-Hilliard equations, singular potential, unmatched density, diffuse interface model, global existence, magnetization, implicit time discretization, incompressible Navier-Stokes equations, weak solution.
		\smallskip\\
		\noindent{\bf AMS subject classifications}. Primary: 76T06. Secondary: 35D30, 35Q35, 76D05. 
\section{Introduction}
 This article is devoted to the mathematical analysis of a system modeling the flow of two viscous incompressible fluids with magnetic properties and unmatched densities undergoing partial mixing in a bounded domain $\Omega\subset \R^d$, $d=2,3.$ with the boundary $\partial\Omega$ of class $C^{2}$. Let $T>0$ and define the space time cylinder as $Q_T = \Omega \times (0,T)$. Further let $\Sigma_{T}$ denote  $\partial\Omega\times(0,T)$.\\
	The mixing of the fluids is described by an order parameter $\phi:Q_T \to \R$, which is the difference of the volume fractions of the fluids involved and undergoes a smooth and rapid transition in an interfacial region between the two fluids. We denote by $v:Q_T\to \R^d$ the mean fluid velocity, by $\rho=\rho(\phi):Q_{T}\to \R$ the mean mass density, $p: Q_T\to \R$ the pressure, $M:Q_{T}\rightarrow\mathbb{R}^{3}$ the magnetization and $\mu:Q_T \to \R$ the chemical potential. The system we consider reads as follows			
	\begin{equation}\label{diffviscoelastic*}
		\left\{ \begin{array}{llll}
			&\displaystyle\partial_{t}(\rho v)+\mbox{div}(\rho v\otimes v)-\mbox{div}\,(2\nu(\phi)\mathbb{D}(v))+\mbox{div}(v\otimes J)+\nabla p&&\\
			&\displaystyle\qquad\qquad\qquad\qquad\qquad=\mu\nabla\phi+\frac{\xi(\phi)}{\alpha^{2}}((|M|^{2}-1)M)\nabla M-\dvr(\xi(\phi)\nabla M)\nabla M&&\mbox{ in } Q_{T},\\
			&\displaystyle\dvr v=0&&\mbox{ in } Q_{T},\\
			&	\displaystyle\partial_{t}M+(v\cdot\nabla)M=\dvr(\xi(\phi)\nabla M)-\frac{\xi(\phi)}{\alpha^{2}}(|M|^{2}-1)M&&\mbox{ in } Q_{T},\\
			&\displaystyle\partial_{t}\phi+(v\cdot\nabla)\phi=\Delta \mu&&\mbox{ in } Q_{T},\\
			&\displaystyle\mu=-\eta\Delta\phi+\Psi'(\phi)+\xi'(\phi)\frac{|\nabla M|^{2}}{2}+\frac{\xi'(\phi)}{4\alpha^{2}}(|M|^{2}-1)^{2}&&\mbox{ in }Q_{T},\\
			&\displaystyle v=0,\ \partial_{n}M=0,\ \partial_{n}\phi=\partial_{n}\mu=0&&\mbox{ on } \Sigma_{T},\\
			&\displaystyle(v,M,\phi)(\cdot,0)=(v_{0},M_{0},\phi_{0})&& \mbox{ in }\Omega.
		\end{array}\right.
	\end{equation} 	 
	where $J$ is a relative flux related to the diffusion of the components and is given by the following expression:
	\begin{equation}\label{extJ}
		\begin{array}{l}
			\displaystyle J=-\frac{\widetilde{\rho}_{2}-\widetilde{\rho}_{1}}{2}\nabla\mu,
		\end{array}
	\end{equation}
	with $\widetilde{\rho}_{i}$ ($i=1,2$) denoting the specific density of the $i-$th fluid. In system \eqref{diffviscoelastic*}, $\alpha$ and $\eta$ are positive constants, $\nu(\phi)$ is the concentration dependent viscosity coefficient which is assumed to be non degenerate and $\mathbb{D}(v)=\frac{1}{2}\left(\nabla v+(\nabla v)^\top\right)$ denotes the symmetric part of the velocity gradient. The factor $\alpha$ penalizes the saturation condition of the length of the magnetization vector $|M|$ from 1 and $\eta$ measures the thickness of the region where the two fluids mix. The function $\xi(\phi)$ denotes the exchange parameter, which reflects the tendency of the magnetization to align in one direction. We assume that $\xi(\cdot)$ is non degenerate, i.e., that it has a positive lower bound and further both $\xi$ and $\xi'$ are bounded from above, cf.\ \eqref{XiAssum}. The homogeneous free energy density of the fluid mixture is denoted by $\Psi(\phi).$ We will consider a class of singular free energies and 
	our consideration (cf.\ \eqref{PsiReg}) will include the homogeneous free energy of the form
		\begin{equation}\label{exPsi}
		\begin{array}{l}
		\displaystyle\Psi(s)=\frac{a}{2}\left((1+s)\ln(1+s)+(1-s)\ln(1-s)\right)-\frac{b}{2}s^{2},
		\end{array}
		\end{equation}
		where $s\in[-1,1]$ and $a,b>0,$ introduced in \cite{Cahn}. The logarithmic terms in \eqref{exPsi} relate to the entropy of the system. We note that the function $\Psi$ in \eqref{exPsi} is convex iff $a\geqslant b$ and $\Psi'$ shows singular behavior at $s=\pm 1$.\\
	The present article is devoted to prove the existence of a global weak solution (i.e.\ without any restriction on time and the size of the initial data) of the model \eqref{diffviscoelastic*}. In Section~\ref{Sec:Notation} we will first introduce some notations corresponding to the functional spaces, the Leray projector which will be essential to deal with our incompressible bi-fluid model and then present our result on the global well posedness of the system \eqref{diffviscoelastic*}. In Section~\ref{Sec:StratAppr} we will comment on the strategies of the proof and related approaches. After a discussion on the physical background of the system in Section~\ref{Sec:Derivation}, we devote Section~\ref{Sec:BibNot} to bibliographical notes. 
	\subsection{Functional framework and main result}\label{Sec:Notation}
	\subsubsection{Functional settings} The Lebesgue and Sobolev spaces are denoted by the notations  $L^{p}(\Omega)$ and $W^{s,p}(\Omega)$ respectively.  The standard notations $C^k(\overline\Omega)$ and $C^{k,\gamma}(\overline\Omega)$ ($\gamma\in(0,1]$) are used to denote respectively the spaces of $k$--times continuously differentiable functions and H\"older continuous functions. The functional spaces with elements having compact support are denoted by using a subscript $c.$ The hooked arrow notations $X\hookrightarrow Y$ ($X\stackrel{C}{\hookrightarrow} Y$) are used to write the continuous (compact) embedding of a Banach space $X$ to a Banach space $Y$. The duality pairing between a Banach space $X$ and its dual $X'$ is written as $\left\langle\cdot,\cdot\right\rangle.$ The functional space $C_w([0,T];X)$ denotes a subspace of $L^\infty(0,T;X)$ containing functions $f$ for which the mapping $t\mapsto\left\langle \phi, f(t)\right\rangle$ is continuous on $[0,T]$ for each $\phi\in X'$. We set 
	\begin{align*}
	\LND=\overline{\{v\in C^\infty_c(\Omega):\ \dvr v=0\text{ in }\Omega\}}^{\|\cdot\|_{L^2}},\quad
	&\WND=\overline{\{v\in C^\infty_c(\Omega):\ \dvr v=0\text{ in }\Omega\}}^{\|\cdot\|_{W^{1,2}}},\\[3.mm]
	W^{2,2}_n(\Omega)=\{u\in W^{2,2}(\Omega): \partial_n u=0\text{ on }\partial\Omega\},\quad
	&V(\Omega)=\WND\cap W^{2,2}(\Omega).
	\end{align*}
From now onward we will use $C$ to denote a generic positive constant which might vary from line to line. For simplicity of notations we will always use $\cdot$ to denote both the scalar product of vectors and tensor products.\\
 Let us now introduce the Leray projector $\mathbb{P}_{\dvr}:L^{2}(\Omega)\rightarrow L^{2}_{\dvr}(\Omega)$ defined as 
 \begin{equation}\label{Leray1}
 \begin{array}{l}
 \displaystyle\mathbb{P}_{\dvr}(f)=f-\nabla p\,\, \mbox{for}\,\, f\in L^{2}(\Omega),\,\, \mbox{where}\,\, p\in W^{1,2}(\Omega)\,\, \mbox{with} \,\,\int_{\Omega} p=0
 \end{array}
 \end{equation}
 solves the weak Neumann problem
 \begin{equation}\label{Leray}
 \begin{array}{l}
 \displaystyle\left(\nabla p,\nabla\varphi\right)_{\Omega}=\left(f,\nabla\varphi\right)\,\,\mbox{for all}\,\,\varphi\in C^{\infty}(\overline{\Omega}).
 \end{array}
 \end{equation}
	\subsubsection{Existence of weak solutions}
	In order to state the precise definition of a weak solution to \eqref{diffviscoelastic*} we summarize the assumptions on the functions $\xi,$ $\nu,$ $\Psi$ and also make the dependence of the mean density $\rho(\phi)$ on $\phi$ precise.\\
	\begin{assumption}\label{Assumption}
		The function $\xi\in C^1(\eR)$ satisfies
		\begin{equation}\label{XiAssum}
			\begin{split}
				&0<c_1\leq\xi\leq c_2\text{ on }\eR, \text{ for some }c_1,c_2>0,\\
				&\xi'\leq c_3\text{ on }\eR, \text{ for some }c_3>0.\
			\end{split}
		\end{equation}
		The viscosity coefficient $\nu\in C^1(\eR)$ satisfies
		\begin{equation}\label{nuAssum}
		0<\nu_1\leq\nu\leq \nu_2\text{ on }\eR, \text{ for some }\nu_1,\nu_2>0.
		\end{equation}
		The homogeneous free energy density $\Psi\in C([-1,1])\cap C^2((-1,1))$ solves
		\begin{equation}\label{PsiReg}
			\begin{split}
				&\lim_{s\rightarrow -1}\Psi'(s)=-\infty,\ \lim\limits_{s\rightarrow 1}\Psi'(s)=\infty,\\ &\Psi''(s)\geqslant -\kappa, \text{ for some }\kappa\in\mathbb{R}.
			\end{split}
		\end{equation}
		The mean mass density $\rho$ and the phase field $\phi$ are related via
		\begin{equation}\label{DefRho}
		\rho(\phi)=\frac{1}{2}(\widetilde{\rho}_{1}+\widetilde{\rho}_{2})+\frac{1}{2}(\widetilde{\rho}_{2}-\widetilde{\rho}_{1})\phi \,\,\mbox{in}\,\, \overline{Q}_{T},
		\end{equation}
		where $\widetilde{\rho}_{i}>0$, $i=1,2$ are specific constant mass densities of the unmixed fluids.
	\end{assumption}
	\begin{remark}\label{Rem:DensityBound}
		The expression for $\rho$ in \eqref{DefRho} implies that 
		\begin{equation}\label{BoundsOnRho}
			0<\min\{\widetilde\rho_1,\widetilde\rho_2\}\leq \rho(\phi)\leq \max\{\widetilde\rho_1,\widetilde\rho_2\}
		\end{equation}
		provided that $|\phi|\leq 1$.
	\end{remark}
	
	\begin{remark}
			Let $\xi_{1},\xi_{2}>0$ be the exchange constants for the magnetic fluids undergoing partial mixing. The function
			$$\xi(\phi)=(1-\mathcal{H}_{\eta}(\phi))\xi_{1}+\mathcal{H}_{\eta}(\phi)\xi_{2},$$
			 where $\eta>0$ corresponds to the thickness of the interface and $\mathcal{H}_{\eta}(x)=\frac{1}{1+e^{-\frac{x}{\eta}}}$ is a regularized approximation of the Heaviside step function, provides an example of a non degenerate function $\xi$ satisfying the assumptions \eqref{XiAssum}, cf.\ \cite{KMS20, Nochetto,yangmao}, in which such a regularized Heaviside function is used in a similar contexts.
	\end{remark}
	Next we define the notion of weak solution to the system \eqref{diffviscoelastic*}.
	
	\begin{mydef}[Definition of weak solutions]\label{DefWS}
		Let $T>0.$ For a given triplet
		\begin{equation}\label{InValAs}
			(v_0,M_0,\phi_0)\in L^2_{\dvr}(\Omega)\times W^{1,2}(\Omega)\times W^{1,2}(\Omega)\text{ with }|\phi_0|\leq 1,
		\end{equation} 
		 we call the quadruple $(v,M,\phi,\mu)$ possessing the regularity
		\begin{equation}\label{functionalspaces}
			\begin{split}
				v&\in C_w([0,T];L^2_{\dvr}(\Omega))\cap L^2(0,T;W^{1,2}_{0,\dvr}(\Omega)),\\
				M&\in C_w([0,T];W^{1,2}(\Omega))\cap C([0,T];L^{2}(\Omega))\cap W^{1,2}(0,T;L^{\frac{3}{2}}(\Omega)),\\
				\phi&\in \displaystyle C_w([0,T];W^{1,2}(\Omega))\cap C([0,T];L^{2}(\Omega))\cap L^2(0,T;W^{2,1}(\Omega))\text{ with }\Psi'(\phi)\displaystyle\in L^1(Q_T),\\
				\mu&\in L^2(0,T;W^{1,2}(\Omega)),
			\end{split}
		\end{equation}
		a weak solution to \eqref{diffviscoelastic*} if it satisfies 
		\begin{equation}\label{WeakForm}
			\begin{split}
				&\int_{\Omega}\rho(t)v(t)\cdot\psi_1(t)-\int_{\Omega}\rho_0v_0\cdot\psi_{1}(0)=\int_0^t\int_{\Omega}\biggl(\rho v\cdot\tder\psi_{1} +\left(\rho v\otimes v+v\otimes J\right)\cdot\nabla \psi_{1}\\ &\qquad-2\nu(\phi)\mathbb{D}v\cdot\mathbb{D}\psi_{1}-\nabla\mu\phi\cdot\psi_{1}
		       +\left(\frac{\xi(\phi)}{\alpha^{2}}\bigl((|M|^{2}-1)M\bigr)\nabla M
				-\dvr\bigl(\xi(\phi)\nabla M\bigr)\nabla M\right)\cdot\psi_{1}\biggr),\\
				&\int_\Omega M(t)\cdot\psi_2(t)-\int_\Omega M_0\cdot \psi_{2}(0)=\int_0^t\int_\Omega\biggl(M\cdot\tder\psi_{2}-(v\cdot\nabla) M\cdot\psi_{2}-\xi(\phi)\nabla M\cdot\nabla\psi_{2}\\
				&\qquad\qquad\qquad\qquad\qquad\qquad\qquad\quad-\frac{1}{\alpha^2}\bigl(\xi(\phi)(|M|^2-1)M\bigr)\cdot\psi_{2}\biggr),\\
				&\int_\Omega \phi(t)\psi_3(t)-\int_\Omega\phi_0\psi_3(0)=\int_0^t\int_\Omega\left(\phi\tder\psi_{3}-(v\cdot\nabla)\phi\psi_{3}-\nabla \mu\cdot\nabla\psi_{3}\right),\\
				&\mu-\xi'(\phi)\frac{|\nabla M|^{2}}{2}-\frac{\xi'(\phi)}{4\alpha^{2}}(|M|^{2}-1)^{2}=-\eta\Delta\phi+\Psi'(\phi)\text{ a.e.\ in }Q_T
			\end{split}
		\end{equation}
		for all $t\in(0,T)$, for all $\psi_{1}\in C^{1}_{c}([0,T);V(\Omega))$, $\psi_{2}\in C^{1}_{c}([0,T);W^{1,2}(\Omega))$ and all \\
		$\psi_{3}\in C^{1}_{c}\left([0,T);W^{1,2}(\Omega)\right)$.
		The initial data are attained in the form 
		\begin{equation}\label{InitDataAtt}
			\lim_{t\to 0_+}\left(\|v(t)-v_0\|_{L^2(\Omega)}+\|M(t)-M_0\|_{W^{1,2}(\Omega)}+\|\phi(t)-\phi_0\|_{W^{1,2}(\Omega)}\right)=0.
		\end{equation}
	\end{mydef}
	Now we present the central result of our article that concerns the global existence of a weak solutions of the model \eqref{diffviscoelastic*}. 
	\begin{theorem}\label{Thm:Main}
		Let $T>0$, $\Omega\subset\eR^d$ be a bounded domain of class $C^{2}$ and let the initial datum $(v_0,M_0,\phi_0)$ satisfy \eqref{InValAs}. Then under Assumption~\ref{Assumption} there exists a quadruple $(v,M,\phi,\mu)$ which solves\eqref{diffviscoelastic*} in the sense of Definition~\ref{DefWS}. Moreover there exists a $p>2$ such that the triplet $(M,\phi,\Psi'(\phi))$ enjoys the following  improved regularity
		\begin{equation}\label{AddReg}
		\begin{split}
		    M&\in L^2(0,T;W^{1,p}(\Omega)),\\
		    \phi&\in L^2(0,T;W^{2,\frac{2p}{p+2}}(\Omega)),\\ \Psi'(\phi)&\in L^2(0,T;L^\frac{2p}{p+2}(\Omega)). 
		\end{split}
		\end{equation}
	  Further the following items hold:\\
	  $(i)$ The obtained weak solution $(v,M,\phi,\mu)$ of \eqref{diffviscoelastic*} satisfies the following energy inequality:
	  \begin{equation}\label{EnIneq*}
	  \begin{split}
	  E_{tot}(v(t),M(t),\phi(t))+&\int_0^t\Biggl(\|\sqrt{2\nu(\phi)}\mathbb{D} v\|^2_{L^2(\Omega)}+\|\nabla \mu\|^2_{L^2(\Omega)}\\
	  &+\left\|\dvr(\xi(\phi)\nabla M)-\frac{\xi(\phi)}{\alpha^2}M(|M|^2-1)\right\|^2_{L^2(\Omega)}\Biggr)\leq E_{tot}(v_0,M_0,\phi_0)
	  \end{split}
	  \end{equation}
	  for all $t\in(0,T),$ where
	  \begin{equation}\label{defEtot}
	  \begin{split}  
	  E_{tot}(v,M,\phi)=&\frac{1}{2}\int_{\Omega}\rho(\phi)|v|^{2}+\frac{1}{2}\int_{\Omega}\xi(\phi)|\nabla M|^{2}+\frac{1}{4\alpha^{2}}\int_{\Omega}\xi(\phi)(|M|^{2}-1)^{2}\\
	  & +\frac{\eta}{2}\int_{\Omega}|\nabla\phi|^{2}+\int_{\Omega}{\Psi}(\phi)
	  \end{split}
	  \end{equation}
	  with $\rho(\phi)$ being defined as in \eqref{DefRho}.\\
	  $(ii)$ The magnetization $M$ attains the homogeneous Neumann boundary condition in a weak sense, i.e.\ for $a.e.$ $t\in(0,T)$ the following holds
	  \begin{equation}\label{trace}
	  \begin{array}{l}
	  \gamma_{n}(\xi(\phi)\nabla M)=0\quad \mbox{in}\quad (W^{\frac{1}{2},2}(\partial\Omega))',
	  \end{array}
	  \end{equation}
	    where $\gamma_{n}$ is the normal trace operator defined on $L^{2}(\Omega)\rightarrow (W^{\frac{1}{2},2}(\partial\Omega))'$ such that the following Stokes formula holds for $a.e.$ $t\in(0,T):
	   $
	   \begin{equation}\label{Stokes}
	   \begin{array}{l}
	   \displaystyle\int_{\Omega}\dvr(\xi(\phi)\nabla M)\cdot \psi_{2}=-\int_{\Omega}\xi(\phi)\nabla M\cdot\nabla \psi_{2}+\langle\gamma_{n}(\xi(\phi)\nabla M),\gamma_{0}\psi_{2}\rangle_{(W^{\frac{1}{2},2}(\partial\Omega))',W^{\frac{1}{2},2}(\partial\Omega)}
	   \end{array}
	   \end{equation} 
	   for all $\psi_{2}\in W^{1,2}(\Omega)$, where $\gamma_{0}: W^{1,2}(\Omega)\rightarrow W^{\frac{1}{2},2}(\partial\Omega)$ is the Dirichlet trace operator.\\
	  $(iii)$ Moreover, if $M_{0}\in W^{1,2}(\Omega)\cap L^{r}(\Omega),$ $r>6,$  then $M\in L^{\infty}(0,T;L^{r}(\Omega))$ and 
	    \begin{equation}\label{afLpGron}
	    \begin{array}{l}
	    \displaystyle\|M(t)\|_{L^{r}(\Omega)}\leqslant \|M_{0}\|_{L^{r}(\Omega)}e^{\delta t}\quad\mbox{for all}\quad t\in[0,T]
	    \end{array}
	    \end{equation}
	    for some positive constant $\delta>0.$ Additionally, if $M_{0}\in L^{\infty}(\Omega),$ then
	    \begin{equation}\label{afLpGronLin}
	    \begin{array}{l}
	    \displaystyle\|M(t)\|_{L^{\infty}(\Omega)}\leqslant \|M_{0}\|_{L^{\infty}(\Omega)}e^{\delta t}\quad\mbox{for all}\quad t\in[0,T].
	    \end{array}
	    \end{equation} 
	\end{theorem}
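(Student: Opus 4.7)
The plan is to construct the weak solution through a multi-level approximation scheme. First, I would regularize the singular potential by replacing $\Psi$ with a smooth approximation $\Psi_\varepsilon\in C^2(\eR)$ satisfying $\Psi_\varepsilon''\geq -\kappa$ uniformly in $\varepsilon$, obtained for instance by Moreau--Yosida approximation of the convex part and keeping the quadratic perturbation. Second, in keeping with the implicit time discretization mentioned in the keywords, I would partition $[0,T]$ into $N$ intervals of length $\tau=T/N$ and solve a backward-Euler-type coupled system at each step. Uniform bounds then come from a discrete counterpart of the energy inequality \eqref{EnIneq*}, and the limits $\tau\to 0$ and $\varepsilon\to 0$ are taken successively.

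At the $k$-th time step, given $(v^{k-1},M^{k-1},\phi^{k-1})$ I would seek $(v^k,M^k,\phi^k,\mu^k)\in V(\Omega)\times W^{1,2}(\Omega)\times W^{2,2}_n(\Omega)\times W^{1,2}(\Omega)$ satisfying the time-discretized system, solvable by a Leray--Schauder argument combined with the Galerkin method in a basis of $V(\Omega)$ and of $W^{1,2}(\Omega)$. Testing the discrete momentum equation by $v^k$, the magnetization equation by $-\dvr(\xi(\phi^k)\nabla M^k)+\xi(\phi^k)\alpha^{-2}(|M^k|^2-1)M^k$, the Cahn--Hilliard equation by $\mu^k$ and the chemical-potential identity by the difference quotient of $\phi$, and exploiting the cancellations encoded in the choice \eqref{extJ} for the flux $J$ together with \eqref{BoundsOnRho}, gives a discrete analogue of \eqref{EnIneq*}. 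This furnishes uniform bounds on the piecewise-constant/affine interpolants in the spaces listed in \eqref{functionalspaces}. The improved regularity \eqref{AddReg} follows at the discrete level by viewing the magnetization equation as a second-order elliptic problem with bounded measurable coefficient $\xi(\phi)$ and applying the Meyers $W^{1,p}$ estimate to obtain $p>2$ depending only on $c_1,c_2$; plugging this back into the chemical potential equation and invoking $W^{2,q}$ regularity for the Neumann Laplacian yields the claimed bounds on $\phi$ and on $\Psi_\varepsilon'(\phi)$.

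The passage $\tau\to 0$ uses Aubin--Lions compactness: the momentum equation gives a uniform bound on $\tder(\rho v)$ in a suitable negative space, the magnetization equation controls $\tder M$ in $L^2(0,T;L^{3/2}(\Omega))$, and the Cahn--Hilliard equation controls $\tder\phi$. This produces strong convergence of $\phi$ in $C([0,T];L^2(\Omega))$ and of $M$ in $L^2(0,T;W^{1,2}(\Omega))$, which together with the energy-based bounds suffice to identify all nonlinear terms $\rho(\phi)v\otimes v$, $\xi(\phi)\nabla M\cdot\nabla M$, $\xi(\phi)(|M|^2-1)M$, $\nu(\phi)\mathbb{D}v$, $\mu\nabla\phi$, and the flux $v\otimes J$ in the limit. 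The subsequent limit $\varepsilon\to 0$ is the main obstacle: one has to show $\Psi_\varepsilon'(\phi_\varepsilon)$ is equi-integrable and converges, at least weakly in $L^1(Q_T)$, to $\Psi'(\phi)$, which in turn forces $|\phi|\leq 1$ a.e. This is done by deriving an $\varepsilon$-uniform $L^1$-bound for $\Psi_\varepsilon'(\phi_\varepsilon)-\overline{\Psi_\varepsilon'(\phi_\varepsilon)}$ by testing the $\mu$-equation with $\Psi_\varepsilon'(\phi_\varepsilon)-\overline{\Psi_\varepsilon'(\phi_\varepsilon)}$, controlling the mean value separately, and then invoking the standard convex-analysis argument to identify the weak limit.

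Items $(i)$--$(iii)$ then follow from the same approximation. The energy inequality \eqref{EnIneq*} is inherited by weak lower semicontinuity of the dissipation and convexity of the energy functional. The Neumann trace identity \eqref{trace} is obtained once $\dvr(\xi(\phi)\nabla M)\in L^2(Q_T)$ is established (which is guaranteed by the dissipation term in \eqref{EnIneq*}) by passing to the limit in the Stokes formula \eqref{Stokes} applied at the approximate level. The initial-data attainment \eqref{InitDataAtt} follows from weak continuity together with the usual strong-upper/lower semicontinuity matching of the energy at $t=0$. Finally, the $L^r$ estimates \eqref{afLpGron}--\eqref{afLpGronLin} are obtained at the discrete level by testing the magnetization equation by $|M^k|^{r-2}M^k$: the convective term vanishes due to $\dvr v^k=0$, the diffusive contribution is non-negative by $\xi\geq c_1$, and the reaction term contributes $\xi(\phi^k)\alpha^{-2}(|M^k|^r-|M^k|^{r-2}M^k\cdot M^k)$, with the sign allowing absorption of $c_3\|M^k\|_{L^r}^r$ via Young's inequality and the $W^{1,p}$ bound; a Grönwall argument, followed by a Moser iteration of the $L^r$ estimates in $r$, yields the $L^\infty$ bound.
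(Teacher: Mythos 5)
Your overall architecture (implicit time discretization, Leray--Schauder solvability of the discrete problem, discrete energy inequality, interpolants, Aubin--Lions, elliptic $W^{1,p}$ regularity for the magnetization with merely bounded measurable coefficients, lower semicontinuity for the energy inequality, $|M|^{r-2}M$ tests for item $(iii)$) parallels the paper. The decisive difference --- and the genuine gap --- is your first step: replacing $\Psi$ by a smooth approximation $\Psi_\varepsilon$ and recovering $|\phi|\leq 1$ only in a final limit $\varepsilon\to 0$. For the unmatched-density model this is not a harmless regularization: the mean density is the affine function \eqref{DefRho} of $\phi$, so the bound \eqref{BoundsOnRho} --- which you yourself invoke in the discrete energy argument --- holds only when $|\phi|\leq 1$. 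At the $\varepsilon$-level, solutions of the fourth-order Cahn--Hilliard dynamics with a smooth potential need not stay in $[-1,1]$ (there is no comparison principle), so $\rho(\phi_\varepsilon)$ may vanish or become negative. Then the kinetic term $\frac{1}{2}\int_\Omega\rho|v|^2$ in \eqref{defEtot} is no longer coercive, the discrete energy inequality does not yield an $L^\infty(0,T;L^2)$ bound on $v$, and the compactness argument for the velocity (which writes $v=\rho^{-\frac12}(\rho^{\frac12}v)$ and needs a positive lower bound on $\rho$) collapses; truncating $\rho$ instead destroys the mass-balance structure behind the $J$-modified momentum equation and hence the energy identity. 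This is precisely why the paper never smooths $\Psi$: it rewrites \eqref{diffviscoelastic*}$_{5}$ via the convex function $\widetilde\Psi_0$ and its subdifferential $\partial\widetilde E$, and solves each time-discrete problem with the singular potential itself (Proposition~\ref{ellipticreg} supplying the $W^{2,2}$/$W^{2,p}$ estimates and $\widetilde\Psi_0'(\phi)\in L^2$), so that $\phi_{k+1}\in\mathcal D(\partial\widetilde E)$ forces $|\phi_{k+1}|\leq 1$ at every step and only the single limit $h\to 0$ is required.

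A second, smaller gap: you assert that testing a fully implicit scheme already gives a discrete analogue of \eqref{EnIneq*}, but the map $M\mapsto(|M|^{2}-1)M$ is not monotone, and with a fully implicit discretization the inequality $(M_{k+1}-M_k)\cdot\bigl(|M_{k+1}|^{2}M_{k+1}-M_{k+1}\bigr)\geq\frac14\bigl(|M_{k+1}|^{2}-1\bigr)^{2}-\frac14\bigl(|M_k|^{2}-1\bigr)^{2}$ fails; the paper obtains unconditional stability only through the convex-splitting discretization $(|M_{k+1}|^{2}M_{k+1}-M_k)$ combined with Lemma~\ref{algebriclem}. Without this (or a time-step restriction) your discrete energy estimate is not available. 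The remaining ingredients are broadly consistent with the paper: for the $W^{1,p}$ gain one needs a Neumann-problem version of Meyers' estimate for nondegenerate measurable coefficients (the paper uses Gr\"oger's result), the identification of $\Psi'(\phi)$ can be done either by your $L^1$-estimate route or by the measure-of-$\{|\phi|=1\}$ argument used here, and in item $(iii)$ simply letting $r\to\infty$ in the $r$-uniform Gr\"onwall bound replaces the Moser iteration.
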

	We stress on the fact that the improved regularity \eqref{AddReg}, more precisely the unifrom bounds on the suitable approximates of $(M,\phi,\Psi'(\phi))$ in the spaces mentioned in \eqref{AddReg} play a key role in the passage of limit and recovering the weak formulations \eqref{WeakForm} (more precisely  \eqref{WeakForm}$_{4}$).\\ 
	 To the best of our knowledge, \cite{Nochetto,yangmao,KMS20} are the only articles in the literature studying diffuse interface models for magnetic fluids. The article \cite{Nochetto} develops a simplified model describing the behavior of two-phase ferrofluid flows using phase field-techniques and present an energy-stable numerical scheme for the same. The authors of \cite{Nochetto} further analyse the stability and the  convergence of the numerical scheme developed and as a by-product they prove  the existence of weak solutions of their model. In the article \cite{yangmao} the authors propose a diffuse interface model and finite element approximation for two-phase magnetohydrodynamic (MHD) flows with different viscosities and electric conductivities. Their model involves the incompressible Navier-Stokes equations, the Maxwell equations of electromagnetism and the Cahn-Hilliard equations. Unlike \cite{Nochetto} and \cite{yangmao}, in one of our previous articles \cite{KMS20}, we studied a diffuse interface magnetic fluid model where the magnetization vector $M$ is modeled by a gradient flow dynamics. \\
	 As far as we know, our current article is the first mathematical study of a diffuse interface model for a magnetic fluid with unmatched densities. We consider the model \eqref{diffviscoelastic*} where the mean mass density of the mixture depends on the order parameter $\phi$ via the formula \eqref{DefRho}. We show that the mean density $\rho(\phi)$ is always strictly positive and bounded. In order to do so we prove that the order parameter $\phi$ solves the physically reasonable bound $\phi(x,t)\in[-1,1]$ for a.e.\ $(x,t)\in Q_{T}.$ \\
	 To this end it is important to use a singular potential $\Psi(\cdot)$, cf.\ \eqref{PsiReg}, as a homogeneous free energy density of the mixture. 
	 Often in the literature this singular free energy is approximated by a suitable smooth free energy density. For instance, in \cite{KMS20}, we considered a Ginzburg–Landau double-well potential $\tfrac{1}{4\eta}(|\phi|^{2}-1)^{2}$ instead of the singular potential $\Psi(\cdot).$ But using such a polynomial potential one can not ensure that the order parameter $\phi$ stays in the physical reasonable interval $[-1,1]$ due to the lack of a comparison principle for fourth order diffusion equation and hence in particular can not deal with the model \eqref{diffviscoelastic*} comprising of fluids with unmatched densities.\\
	 Unlike the model considered in \cite{yangmao}, in the present case (and also in the one considered in \cite{KMS20}) the magnetization $M$ enters into the Cahn-Hilliard dynamics. Due to the presence of $|\nabla M|^{2}$ in the Cahn-Hilliard part, cf.\ \eqref{diffviscoelastic*}$_{5}$ we can not obtain $L^{2}(0,T;W^{2,2}(\Omega))$ regularity of the order parameter $\phi$ and we only recover $\phi\in L^{2}(0,T;W^{2,q}(\Omega))$ for some $q>1$ by a bootstrap argument using  $L^{2}(0,T;W^{1,p}(\Omega))$ ($p>2$) elliptic regularity for $M.$
	 
   \subsection{Ideas of proof and related discussion}\label{Sec:StratAppr}
   The proof of Theorem~\ref{Thm:Main} is given in  Section~\ref{Thmmain}. It relies on an unconditionally stable time discretization scheme designed in Section~\ref{sec2}. Before going into the analysis of a time discrete problem we first write the singular potential $\Psi$ as a perturbation of a convex function. This helps in reformulating the Cahn-Hilliard equation \eqref{diffviscoelastic*}$_{5}$ as the subdifferential of a convex potential and to use the monotone operator theory and regularity results for Cahn-Hilliard equation developed in \cite{AbelsWilke} and \cite{Abels2007}. The reformulation is done in Section~\ref{reformCahn}. Roughly speaking the content of Section~\ref{sec2} is to consider a sequence $0=t_0<t_1<\ldots<t_k<t_{k+1}<\ldots$, $k\in\eN_0$ of equidistant nodes and next to construct a solution $(v_{k+1},M_{k+1},\phi_{k+1},\mu_{k+1})$ to a stationary problem (cf.\ \eqref{timediscretesystem}) at the point $t_{k+1}$ using $(v_k,M_k,\phi_k)$ which corresponds to the solution at the time $t_k$.\\
    There is no common rule to write a time discretization of a nonlinear PDE model. It is generally done in a way so that the discrete system admits an energy inequality which is in close proximity with the formal energy balance of the original unsteady model. Here we follow a strategy devised in our previous article \cite{KMS20} to suitably discretize the term $\displaystyle\frac{\xi(\phi)}{\alpha^{2}}(|M|^{2}-1)M$ appearing in \eqref{diffviscoelastic*}$_{3}.$ In order to obtain an energy type inequality for \eqref{timediscretesystem} one in particular tests $\displaystyle\frac{M_{k+1}-M_{k}}{h}$ (which is the discretization of the time derivative $\displaystyle\partial_{t}{M}$, with $h=t_{k+1}-t_{k}$ in the discrete magnetization equation \eqref{timediscretesystem}$_{3}$) with an approximation of $(|M|^{2}-1)M.$ Since the map $\displaystyle M\mapsto (|M|^{2}-1)M$ is not monotone one can check that
    $$(M_{k+1}-M_{k})\left(|M_{k+1}|^{2}M_{k+1}-M_{k+1}\right)\ngeqslant \frac{1}{4}(|M_{k+1}|^{2}-1)^{2}-\frac{1}{4}(|M_{k}|^{2}-1)^{2}$$
    and hence the discretization $\displaystyle\frac{\xi(\phi)}{\alpha^{2}}(|M|^{2}-1)M\approx \frac{{\xi(\phi_{k})}}{\alpha^{2}}(|M_{k+1}|^{2}M_{k+1}-M_{k+1})$ does not lead to an unconditionally stable scheme. Following the convex splitting scheme used in our previous article \cite{KMS20} for vector valued functions, which is itself inspired from the convex splitting used in \cite{yangmao} for scaler functions, we use the approximation $\displaystyle\frac{\xi(\phi)}{\alpha^{2}}(|M|^{2}-1)M\approx \frac{{\xi(\phi_{k})}}{\alpha^{2}}(|M_{k+1}|^{2}M_{k+1}-M_{k})$, which along with Lemma~\ref{algebriclem} provides the desired estimate
    $$(M_{k+1}-M_{k})\left(|M_{k+1}|^{2}M_{k+1}-M_{k}\right)\geqslant \frac{1}{4}(|M_{k+1}|^{2}-1)^{2}-\frac{1}{4}(|M_{k}|^{2}-1)^{2}.$$
    We then deal with the time discrete system \eqref{timediscretesystem} by considering it as a  perturbation of a certain nonlinear operator and solving the operator equation by employing a fixed point argument.\\
    After the proof of an existence result for the discrete problem \eqref{timediscretesystem} in Section~\ref{sec2}, we define piecewise constant interpolants in Section~\ref{Thmmain} which approximate $(v,M,\phi,\mu),$ the solution to \eqref{WeakForm}. The weak compactness of the interpolants are obtained from an energy type inequality and the strong compactness properties result by using the classical Aubin-Lions lemma and some suitable interpolation estimates. At this point a crucial observation is the obtainment of the strong convergence of $\{\nabla M^{N}\}_{N}$, where $M^{N}$ approximates $M$. This convergence plays a key role to pass to the limit in the terms approximating $\displaystyle\mbox{div}(\xi(\phi)\nabla M)\nabla M$ (cf.\ \eqref{diffviscoelastic*}$_{1}$) and $\displaystyle\xi'(\phi)\frac{|\nabla M|^{2}}{2}$ (cf.\ \eqref{diffviscoelastic*}$_{5}$), as was presented in our previous article \cite{KMS20}. These arguments can be directly adapted to the current setting. We comment on this at the end of Section~\ref{Ubic}.\\
     To pass to the limit in the approximate of the nonlinear term $\widetilde{\Psi}'_{0}(\phi)$, where $\widetilde{\Psi}_{0}(\cdot)$ corresponds to the convex part of $\Psi(\cdot)$ and is defined on entire $\mathbb{R},$ cf.\ \eqref{deftpsi0}, one first needs to show an apriori estimate of the same in $L^1(Q_{T})$ and next identify the weak limit for a non relabeled subsequence with $\widetilde{\Psi}'_{0}(\phi).$ For the first part the idea roughly is to write the Cahn-Hilliard equation as
    \begin{equation}\label{ellipticphi}
    \begin{alignedat}{2}
   \displaystyle-\eta\Delta\phi^{N}+\widetilde{{\Psi}}'_{0}(\phi^{N})=&\text{lower order terms}+\frac{|\nabla M^{N}|^{2}}{2}&&\text{ in }\Omega\\
   \displaystyle \partial_{n}\phi^{N}=&0 &&\text{ on }\partial\Omega
    \end{alignedat}
    	\end{equation}
   and to use the elliptic structure to obtain a suitable uniform bound for $\phi^{N}$ and $\widetilde{{\Psi}}'_{0}(\phi^{N}).$ In that direction one needs to estimate the right hand side of \eqref{ellipticphi}$_{1}$ in $L^{q}(\Omega)$ with $q>1.$ The boundedness of $E_{tot}$ (defined in \eqref{defEtot}) alone does not provide this information and hence we exploit the dissipative part of the energy and the equation solved by $M^{N}$ to obtain that
   \begin{equation}\label{diveq}
    \dvr\,({{\xi}(\phi^{N})\nabla M^{N}})\in L^{2}(0,T;L^{\frac{3}{2}}(\Omega)).
    \end{equation}
     Next one would expect to recover an improved bound on $\{M^{N}\}$ from \eqref{diveq} but in view of the uniform bound on $\{\phi^{N}\}$ in $L^{\infty}(0,T;W^{1,2}(\Omega))$) one can only use the fact that $\{\xi(\phi^{N})\}$ is bounded in $L^{\infty}(Q_{T})$ and nondegenerate. With this setup we can use \cite{konrad}, which deals with the regularity of weak solutions to elliptic problems with nondegenerate, bounded and measurable coefficients, to obtain a uniform estimate of $M^{N}$ in $L^{2}(0,T;W^{1,p}(\Omega))$ for some $p>2,$ in fact $p$ is slightly greater than two and tends to two as the operator in \eqref{diveq} degenerates. At this moment one can recover a uniform estimate of $|\nabla M^{N}|^{2}$ in $L^{2}(0,T;L^{\frac{2p}{p+2}}(\Omega))$ since $\displaystyle\nabla M^{N}\in L^{\infty}(0,T;L^{2}(\Omega))\cap L^{2}(0,T;L^{p}(\Omega))$. This in turn allows us to use \eqref{ellipticphi} and to obtain a uniform bound of $\widetilde{\Psi}'_{0}(\phi^{N})$ in $L^{2}(0,T;L^{\frac{2p}{p+2}}(\Omega)).$ The details of obtaining these uniform bounds of $M^{N}$ and $\widetilde{\Psi}'_{0}(\phi^{N})$ can be found in Section~\ref{phiNMNuest}. To identify the limit of a subsequence of $\{\widetilde{\Psi}'_{0}(\phi^{N})\}$ with $\widetilde{\Psi}'_{0}(\phi),$ we adapt the ideas related to the estimate of the measure of the set $\{|\phi|=1\}$ developed in \cite{DebDet95} and \cite{FriGra12}.\\
     After the recovery of the weak formulations \eqref{WeakForm} solved by $(v,M,\phi,\mu)$ we prove that the obtained weak solution attains the initial data in a strong sense (cf.\ \eqref{InitDataAtt}).\\
     Items $(ii)$ and $(iii)$ of Theorem~\ref{Thm:Main}, which correspond to the obtainment of boundary condition for $M$ in a weak sense and some further regularity results of $M$ in Lebesgue spaces, are proved in Section~\ref{bndryregM}.
       \subsection{Physical background and comments on the derivation of the model}\label{Sec:Derivation}
       In this section we comment on the physical background of the model \eqref{diffviscoelastic*}. In \cite{KMS20} we already have derived a model for diffuse interface magnetic fluids but with matched densities and a smooth double well potential for the mixing energy. In the present article the density consideration of the mixture is inspired from \cite{AbelsGrun}. To deal with the density dependence it is important to consider a singular logarithmic potential for the mixing energy which is also more physical than a smooth double well potential considered in \cite{KMS20}.\\
     We assume that the mean velocity satisfies the homogeneous Dirichlet boundary condition on $\partial\Omega.$ For the derivation of a modified momentum balance equation solved by a solenoidal mean velocity field $v$ and mean density $\rho(\phi)$ (cf.\ \eqref{DefRho}) we refer to \cite[Section 2]{AbelsGrun}. The obtained momentum balance equation with a general stress tensor $\mathbb{S}$ is of the form:
     \begin{equation}\label{momentumlin}
     \begin{array}{ll}
     \rho\partial_{t}v+\left((\rho v+J)\cdot\nabla\right)v=\dvr\mathbb{S}\quad\mbox{in}\,\, Q_{T},
     \displaystyle 
     \end{array}
     \end{equation} 
      along with the incompressibility $\dvr\,v=0$ where $J$ is the relative diffusion flux defined in \eqref{extJ}. We assume that the stress tensor $\mathbb{S}$ is the sum of the standard viscous Newtonian stress tensor $2\nu(\phi)\mathbb{D}(v)-\pi\mathbb{I}$ (where $\mathbb{I}$ is the identity matrix and $\pi$ being the mean pressure) and an extra contribution from the mixing energy and a simplified micromagnetic energy.\\
      In order to derive the dependence of $\mathbb{S}$ on $\phi,$ $\nabla \phi,$ $M$ and $\nabla M$ we start with the expression of $\mathcal{E}_{mix}$ (the mixing energy) and $\mathcal{E}_{mag}$ (the micromagnetic energy). The mixing energy reads as follows \begin{equation}\label{freeenergy-mix}
      \begin{split}
      \mathcal{E}_{mix}= \frac{\eta}{2}\int_{\Omega}|\nabla\phi|^{2}+\int_{\Omega}\Psi(\phi),
      \end{split}
      \end{equation}
      where $\phi$ is the order parameter, $\eta >0$ denotes the thickness of the interface where the two fluids mix and $\Psi(\cdot)$ is the singular potential satisfying \eqref{PsiReg} (or one can in particular consider the expression \eqref{exPsi}).\\
      In the present article the magnetic energy contribution $\mathcal{E}_{mag}$ is inspired from micromagnetics, for details we refer to \cite{DiFratta-etal2019} and the references therein. We only consider the exchange energy contribution, which reflects the tendency of the magnetization to orient in one direction. We further consider the dependence of the micromagnetic energy on the order parameter $\phi$ which in turn allows us to study a system involving fluids with different magnetic behavior. The simplified micromagnetic energy reads as follows
      \begin{equation*}
      \begin{split}
      &\mathcal{E}_{mag}=\int_{\Omega}\xi(\phi)\frac{|\nabla M|^{2}}{2}+\frac{1}{4\alpha^{2}}\int_{\Omega}\xi(\phi)(|M|^{2}-1)^{2},
      \end{split}
      \end{equation*}
      where $\alpha>0$ is a parameter. The first term in the expression of $\mathcal{E}_{mag}$ is the exchange energy contribution and the second one is a penalization term punishing the derivation of $|M|$ from one (which is a more physical constraint). The consideration of such a penalization term is standard in the literature, cf.\ \cite[Section 1.2]{Kurzke}, \cite{chipotshafrir} or \cite{anjazab}. The magnetic energy in our case is coupled with the order parameter via a regular, bounded and non degenerate function $\xi$ (we refer to \eqref{XiAssum} for the assumptions on $\xi$). In a little different situation, for the modeling and numerical analysis of liquid crystals, one can find specific expressions of degenerate functions $\xi(\cdot)$ in the articles \cite{shenyang} and \cite{yuefeng}. 
      For us it seems important to choose a non degenerate function $\xi$ which in turn plays a crucial role to obtain the strong compactness of $\nabla M.$\\
      Now exactly as in \cite[Section 2.3]{yuefeng} one can use the principle of virtual work to compute that the contribution of $\mathcal{E}_{mix}$ and $\mathcal{E}_{mag}$ to the stress tensor $\mathbb{S}$ is given by
      $$- \frac{\partial \mathcal{E}_{mag}}{\partial \nabla M} \odot \nabla M  -\frac{\partial \mathcal{E}_{mix}}{\partial \nabla \phi} \otimes \nabla \phi
      = -\xi(\phi)(\nabla M\odot\nabla M)-\eta(\nabla\phi\otimes\nabla\phi)$$ 
      where $(\nabla M\odot\nabla M)_{ij}=\sum_{k=1}^{3}(\nabla_{i}M_{k})(\nabla_{j}M_{k})$ and $(\nabla\phi\otimes\nabla\phi)_{ij}=\nabla_{i}\phi\nabla_{j}\phi$.\\
      Hence altogether $\mathbb{S}$ has the following expression
      \begin{equation}\label{mathbbS}
      \begin{array}{l}
      \mathbb{S}=2\nu(\phi)\mathbb{D}(v)-\xi(\phi)(\nabla M\odot\nabla M)-\eta(\nabla\phi\otimes\nabla\phi).
      \end{array}
      \end{equation}
      The momentum balance \eqref{momentumlin} along with \eqref{mathbbS} reads as follows:
      \begin{equation}\label{linmomentum2}
      \begin{array}{ll}
      \rho\partial_{t}v+\left((\rho v+J)\cdot\nabla\right)v-\dvr(2\nu(\phi)\mathbb{D}(v))+\nabla\pi=-\dvr\left(\xi(\phi)(\nabla M\odot\nabla M)+\eta(\nabla\phi\otimes\nabla\phi)\right)\,\,\mbox{in}\,\, Q_{T},
      \end{array}
      \end{equation} 
      with $\dvr\,v=0$ and $v=0$ on $\Sigma_{T}.$\\
      Next we assume physically reasonable boundary conditions $\partial_{n}M=\partial_{n}\phi=\partial_{n}\mu=0$ on $\Sigma_{T}.$ The derivation of the magnetization equation \eqref{diffviscoelastic*}$_{3}$ is based on gradient flow dynamics. The obtainment of the Cahn-Hilliard equations \eqref{diffviscoelastic*}$_{4,5}$ relies on the generalized Fick’s law, i.e., the mass flux be proportional to the
      gradient of the chemical potential (we refer to \cite{Cahn,shenliu} for details). The detailed derivation of \eqref{diffviscoelastic*}$_{3,4,5}$ can be done by following the arguments presented in \cite[p.~8]{KMS20}, with modifications since here we use a singular potential in the mixing energy.\\
      In view of \eqref{diffviscoelastic*}$_{4}$ one at once derives the following mass conservation
     \begin{equation}\label{masconsrv}
     \begin{array}{l}
     \partial_{t}\rho+\dvr\left(\rho v+J\right)=0\quad\mbox{in}\,\,Q_{T}.
     \end{array}
     \end{equation}  
     In the spirit of \cite{AbelsGrun}, we explain here the dynamics behind \eqref{masconsrv}. The equation \eqref{masconsrv} implies that the flux of the density consists of two parts: $\rho v$, describing the transport by the mean velocity, and a relative flux $J$ (cf.\ \eqref{extJ}) related to diffusion of the components. Hence for the unmatched density case, diffusion of the components leads to the diffusion of the mass density.\\
     The modification of the momentum balance (cf.\ \eqref{momentumlin} and \eqref{linmomentum2}) by adding the relative diffusion flux $J$ was proposed in \cite{AbelsGrun} to obtain a local dissipation inequality and global energy estimate for their model. It serves the same purpose for our case and we recover the following formal energy balance for the system \eqref{linmomentum2}--\eqref{diffviscoelastic*}$_{2,3,4,5,6,7}:$
     \begin{equation}\nonumber
     \begin{array}{l}
     \displaystyle \frac{d}{dt}E_{tot}+\left(\|\sqrt{2\nu(\phi)}\mathbb{D} v\|^2_{L^2(\Omega)}+\|\nabla \mu\|^2_{L^2(\Omega)}
     +\left\|\dvr(\xi(\phi)\nabla M)-\frac{\xi(\phi)}{\alpha^2}M(|M|^2-1)\right\|^2_{L^2(\Omega)}\right)=0,
     \end{array}
     \end{equation}
     where $E_{tot}$ is given by \eqref{defEtot}.\\
     Finally in view of the mass balance \eqref{masconsrv}, the incompressibility of $v$ and identities
     \begin{equation}\label{ptwiseidentities}
     \begin{split}
     &\eta\dvr(\nabla\phi\otimes\nabla\phi)=\eta\Delta\phi\nabla\phi+\eta\nabla\left(\frac{|\nabla\phi|^{2}}{2}\right),\\
     & \dvr(\xi(\phi)(\nabla M\odot\nabla M))=\dvr(\xi(\phi)\nabla M)\nabla M+\xi(\phi)\nabla\left(\frac{|\nabla M|^{2}}{2}\right),
     \end{split}
     \end{equation}
     one can rewrite \eqref{linmomentum2} (we refer to \cite[p.~8]{KMS20} for the use of the identities \eqref{ptwiseidentities}) in the form \eqref{diffviscoelastic*}$_{1},$ of course with a modified pressure $p.$ We emphasize that the term $\mu\nabla\phi$ has better compactness properties compared to $\dvr(\nabla\phi\otimes\nabla\phi)$ and this will be exploited in the following analysis.
     \subsection{Bibliographical remarks}\label{Sec:BibNot}  Diffuse interface models without magnetization and comprising of fluids with matched densities date back to the works  \cite{anderson,gurtin,hohenberg,star}. The article \cite{gurtin} provides a unified framework for coupled Navier-Stokes and Cahn-Hilliard equations using the balance law for microforces with a mechanical version of the second law of thermodynamics. The mathematical analysis of such a model first appeared in \cite{star}, where the author deals with strong solutions and stability of stationary solutions (as $t\rightarrow\infty$) in the setting $\Omega=\mathbb{R}^{2}$ assuming a smooth double well potential for the mixing energy. For a detailed review of the subject we also refer to \cite{anderson}.\\
     A detailed study is then performed in the article \cite{boyer} where the author proves the existence of global weak solution of the model both in dimension two and three in a channel with a smooth double well potential for the mixing energy. The article \cite{boyer} further proves that the model under consideration (with non degenerate mobility) admits a strong solution which is global (in time) in dimension two and local (in time) in dimension three. The case of degeneracy in the Cahn-Hilliard equation is also considered in \cite{boyer}. A more complete mathematical description (existence, uniqueness, regularity of solutions and asymptotic behavior) of a similar model with the physically relevant logarithmic potential is discussed in \cite{abelsarma}. The article \cite{abelsarma} proves the existence of strong solutions for an initial velocity $v_{0}$ in interpolation spaces between $W^{1,2}_{0}(\Omega)$ and $W^{2,2}(\Omega)\cap W^{1,2}_{0}(\Omega)$ and satisfying $\mbox{div}\,v_{0}=0$ on $\Omega.$ Further the author shows that any weak solution of the system becomes regular for large times and the order parameter converges to a solution of the stationary Cahn–Hilliard system/a critical point of the mixing energy and the mean velocity tends to zero.  We would also like to quote the article \cite{Giorgini} where the authors study the uniqueness and regularity of weak and strong solutions of the model with the logarithmic potential. The article proves the existence and uniqueness of a global strong solution in dimension two under the assumption that the initial velocity $v_{0}\in W^{1,2}_{0,\dvr}(\Omega)$ and the local in time existence and uniqueness of strong solution in dimension three. Unlike \cite{abelsarma} and \cite{Giorgini} we study the case where the fluids involved have different density and they show magnetic behavior. The coupling of the magnetization with the Cahn-Hilliard dynamics do not allow us to obtain sufficient regularity of the unknowns to prove the uniqueness of weak solutions in dimension two.\\
     In the literature there has been several approaches modeling diffuse interface systems (without magnetization) where the density is not constant. The article \cite{lowengrub} derived a quasi-incompressible diffuse interface model where the velocity field is not divergence free. Analytical results for the model considered in \cite{lowengrub} first appeared in \cite{Abels2009} and \cite{Abels2012}. We also refer to \cite{boyer2} and \cite{ding} for other diffuse interface models involving fluids with unmatched densities. For a slightly non homogeneous case, i.e., with the assumption that the densities of the fluids undergoing partial mixing tend to be equal, \cite{boyer2} proved the global existence of a weak solution and of a unique local strong solution (which is global in two dimension) for their model. The non homogeneous magnetic fluid dynamics we consider in the present article is inspired from  the model introduced in \cite{AbelsGrun}. The global existence of weak solutions for the model introduced in \cite{AbelsGrun} can be found in \cite{AbDeGa113} (the case of non degenerate mobility) and \cite{AbDeGa213} (the case of  degenerate mobility). In a recent article \cite{Giorgini2} the author proves the local in time existence of a strong solution in a bounded domain in dimension two of the model introduced in \cite{AbelsGrun}. The author also shows the global existence in the space periodic set-up. Other variants of the Abels, Garcke and Gr\"{u}n model (\cite{AbelsGrun}) can be found in \cite{GalGrasselli}, \cite{Frigeri1} and \cite{Frigeri2}. The article \cite{GalGrasselli} considers a general diffuse interface model for incompressible two-phase flows with unmatched densities describing the evolution of free interfaces in contact with the solid boundary whereas \cite{Frigeri1} and \cite{Frigeri2} deal with a non-local version of the model derived in \cite{AbelsGrun}. For other analytical results on varying density diffuse interface models we also refer the readers to \cite{AbelsFeireisl}, \cite{FeireislLaurence} (compressible fluids),  \cite{AbelsBreit} (non Newtonian fluids), \cite{Jiang} (coupling between Allen-Cahn and Navier-Stokes) and \cite{Lopes} (non-isothermal diffuse interface model).\\
     Concerning the analysis of single phase magnetic fluids the readers can consult \cite{liubenesova} (magnetization is modeled by LLG dynamics), \cite{anjazab} (gradient flow of magnetization) and for various models involving diffuse interface magnetic fluids with matched densities and smooth double well potential we refer to \cite{Nochetto,yangmao,KMS20}.\\
     A crucial idea in our article is to recover the hidden $L^{2}(0,T;W^{1,p})$ ($p>2$) regularity of $M^{N},$ uniform with respect to $N$ (where $M^{N}$ is the approximate of $M$ defined in Section~\ref{Thmmain}) by using the fact that $\dvr(\xi(\phi^{N})\nabla M^{N})$ is uniformly bounded in $L^{2}(0,T;L^{\frac{3}{2}}(\Omega))$ and $\partial_{n}M^{N}\mid_{\partial\Omega}=0.$ In view of the regularity of $\phi^{N}$ we can only use that $\xi(\phi^{N})$ is bounded, measurable and non degenerate uniformly in $N$. To the best of our knowledge for such an elliptic boundary value problem there are two classes of result. One deals with the H\"{o}lder regularity ($C^{0,\gamma}(\Omega)$) of solutions and the second one proving $W^{1,p}(\Omega)$ ($p>2$) regularity. None of these results imply one another. For the first type of result we refer to \cite{GilTr01} and \cite{Nittka} whereas for the second we quote \cite{konrad} and \cite{Meyers}. In the present article we have used in particular the result proved in \cite{konrad}.

\section{Existence of weak solutions to a time discrete model} 
In this section we prove the existence of weak solution to a time discrete problem corresponding to system~\eqref{diffviscoelastic*}. In that direction we need some regularity results for the Cahn-Hilliard equations proved in \cite{Abels2007}. In order to access the regularity results from \cite{Abels2007} one needs to reformulate the equation \eqref{diffviscoelastic*}$_{5}$ by using the subdifferential of a convex potential. This will be done in the next section and the arguments are inspired from \cite{AbDeGa113}. 
\subsection{Reformulation of the problem using subdifferential of a convex potential}\label{reformCahn}
 First we define a potential $\widetilde{\Psi}$ as follows:
	\begin{equation}\label{deftpsi}
		\widetilde{\Psi}:\mathbb{R}\longrightarrow\mathbb{R},\qquad \widetilde{\Psi}(s)=\begin{cases}
		\Psi(s)&\text{ if }s\in[-1,1],\\
			+\infty&\text{ else}\end{cases} 
	\end{equation}
	where $\Psi$ is introduced in \eqref{PsiReg}. Since $\Psi''(s)\geqslant -\kappa,\,\,\mbox{for some}\,\,\kappa\in\mathbb{R},$ $\widetilde{\Psi}$ is not necessarily convex. In order to use the theory of subdifferentials we introduce a convex function 
	\begin{equation}\label{deftpsi0}
	\widetilde{\Psi}_{0}(r)=\widetilde{\Psi}(r)+\frac{\kappa}{2}r^{2}\,\,\mbox{for}\,\,r\in[-1,1],\,\,\widetilde{\Psi}_{0}\in C([-1,1])\cap C^{2}((-1,1)).
	\end{equation}
	With this new function $\widetilde{\Psi}_{0},$ the equation \eqref{diffviscoelastic*}$_{5}$ can be equivalently written as
	\begin{equation}\label{reformeqmu}
		\begin{array}{l}
			\displaystyle \mu+{\kappa}\phi=\widetilde{\Psi}'_{0}(\phi)-\eta\Delta\phi+\xi'(\phi)\frac{|\nabla M|^{2}}{2}+\frac{\xi'(\phi)}{4\alpha^{2}}(|M|^{2}-1)^{2}\mbox{ in }Q_{T}.
		\end{array}
	\end{equation}
	Inspired by \cite{AbDeGa113}, we define the energy $\widetilde{E}:L^{2}(\Omega)\longrightarrow\mathbb{R}\cup\{+\infty\}$ with the domain
	\begin{equation}\label{defdom}
		\begin{array}{l}
			\mbox{dom}\,\widetilde{E}=\{\phi\in W^{1,2}(\Omega)\suchthat -1\leqslant \phi\leqslant 1\,\,\mbox{a.e.}\}
		\end{array}
	\end{equation}
	as
	\begin{equation}\label{Ephi}
		\widetilde{E}(\phi)=\begin{cases}
			\displaystyle\frac{\eta}{2}\int\limits_{\Omega}|\nabla\phi|^{2}+\int\limits_{\Omega}\widetilde{\Psi}_{0}(\phi)&\text{ for }\phi\in\mbox{dom}\,\widetilde{E},\\
			+\infty&\text{ else.}
		\end{cases}
	\end{equation} 
\begin{prop}\label{ellipticreg}
	\cite[Theorem 3.12.8]{Abels2007} Let $\widetilde{E}:L^{2}(\Omega)\longrightarrow \mathbb{R}\cup\{+\infty\}$ be as defined in \eqref{defdom}--\eqref{Ephi}. Then $\partial \widetilde{E}(\phi)=-\eta\Delta\phi+\widetilde{\Psi}'_{0}(\phi)$ and
	\begin{equation}\label{domainsubg}
	\begin{array}{l}
	\displaystyle\mathcal{D}(\partial\widetilde{E})=\{\phi\in W^{2,2}_{n}(\Omega)\suchthat \widetilde{\Psi}'_{0}(\phi)\in L^{2}(\Omega),\,\,\widetilde{\Psi}''_{0}(\phi)|\nabla\phi|^{2}\in L^{1}(\Omega)\}
	\end{array}
	\end{equation}
	is the domain of definition of the subgradient $\partial\widetilde{E}.$ Moreover, there exists a positive constant $C$ such that 	
	\begin{equation}\label{regularityphi}
	\begin{array}{l}
	\displaystyle\|\phi\|^{2}_{W^{2,2}(\Omega)}+\|\widetilde{\Psi}'_{0}(\phi)\|^{2}_{L^{2}(\Omega)}+\int_{\Omega}\widetilde{\Psi}''_{0}(\phi(\cdot))|\nabla\phi(\cdot)|^{2}\leqslant C\left(\|\partial\widetilde{E}(\phi)\|^{2}_{L^{2}(\Omega)}+\|\phi\|^{2}_{L^{2}(\Omega)}+1\right).
	\end{array}
	\end{equation}
	Further for every $1<p\leqslant 2$, there exists a constant $C_{p}>0$ such that
	\begin{equation}\label{regLp}
	\begin{array}{l}
	\|\phi\|_{W^{2,p}(\Omega)}+\|\widetilde{\Psi}'_{0}(\phi)\|_{L^{p}(\Omega)}\leqslant C_{p}\left(\|\partial\widetilde{E}(\phi)\|_{L^{p}(\Omega)}+\|\phi\|_{L^{2}(\Omega)}+1\right).
	\end{array}
	\end{equation}
\end{prop}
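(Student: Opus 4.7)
The plan is to prove the four assertions (characterization of $\partial\widetilde E$, its domain, and the two regularity bounds) in the order they are stated, following the standard template for subdifferentials of convex energy functionals on $L^2$.

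First I would verify that $\widetilde E$ is a proper, convex, lower semicontinuous functional on $L^2(\Omega)$. Convexity is immediate: the Dirichlet part $\frac{\eta}{2}\int|\nabla\phi|^2$ is convex, and $\widetilde\Psi_0$ has been engineered via \eqref{deftpsi0} by absorbing $\tfrac{\kappa}{2}s^2$ into $\widetilde\Psi$ precisely so that $\widetilde\Psi_0''\geq 0$ on $(-1,1)$; extending by $+\infty$ outside $[-1,1]$ preserves convexity. Lower semicontinuity on $L^2$ follows from weak lower semicontinuity of the Dirichlet integral (so any minimizing sequence with bounded energy is bounded in $W^{1,2}$), Fatou's lemma applied to $\widetilde\Psi_0$ (which is bounded below on $[-1,1]$), and the closure of the pointwise constraint $|\phi|\leq 1$ under a.e.\ convergence.

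Second, I would identify the subgradient. Given $\phi\in\mathrm{dom}(\widetilde E)$ and $w\in L^2(\Omega)$, the definition of $\partial\widetilde E$ asserts $\widetilde E(\psi)-\widetilde E(\phi)\geq (w,\psi-\phi)_{L^2}$ for all $\psi$. Plugging in $\psi=\phi+t\varphi$ for $\varphi\in C^\infty_c(\Omega)$ with $\|\varphi\|_\infty$ small enough that $\phi+t\varphi\in[-1,1]$ a.e., dividing by $t$ and letting $t\to 0^\pm$, yields the weak Euler--Lagrange identity
\begin{equation*}
\eta\int_\Omega\nabla\phi\cdot\nabla\varphi+\int_\Omega\widetilde\Psi_0'(\phi)\varphi=\int_\Omega w\varphi.
\end{equation*}
The delicate point is admissibility of perturbations near the set $\{|\phi|=1\}$: one handles this by a cut-off/truncation argument in a neighborhood of that set, using convexity of $\widetilde\Psi_0$ and the monotone convergence theorem to pass to the limit. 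This shows $w=-\eta\Delta\phi+\widetilde\Psi_0'(\phi)$ distributionally, and elliptic regularity for the Neumann Laplacian then lifts $\phi$ to $W^{2,2}_n(\Omega)$ provided $\widetilde\Psi_0'(\phi)\in L^2$, which in turn is closed under elimination using $\widetilde\Psi_0'(\phi)=w+\eta\Delta\phi\in L^2$. This gives both the formula for $\partial\widetilde E(\phi)$ and the domain description \eqref{domainsubg}.

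Third, for the estimates \eqref{regularityphi} and \eqref{regLp} I would test the equation $-\eta\Delta\phi+\widetilde\Psi_0'(\phi)=w$ with $-\Delta\phi$. Integration by parts, legitimate because $\partial_n\phi=0$, produces
\begin{equation*}
\eta\|\Delta\phi\|_{L^2}^2+\int_\Omega\widetilde\Psi_0''(\phi)|\nabla\phi|^2=-\int_\Omega w\,\Delta\phi,
\end{equation*}
and convexity $\widetilde\Psi_0''\geq 0$ lets both terms on the left survive after Young's inequality, yielding control of $\|\Delta\phi\|_{L^2}$ and of $\int\widetilde\Psi_0''(\phi)|\nabla\phi|^2$ in terms of $\|w\|_{L^2}$. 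Standard Neumann elliptic regularity then upgrades this to the full $W^{2,2}$ estimate, and $\widetilde\Psi_0'(\phi)=w+\eta\Delta\phi$ inherits the $L^2$ bound. For \eqref{regLp} I would repeat the argument with $|\widetilde\Psi_0'(\phi)|^{p-2}\widetilde\Psi_0'(\phi)$ (or an approximation thereof, to legitimize the test) as the test function together with $L^p$ Calderón--Zygmund regularity for the Neumann problem; the extra Poincaré-type term produces the harmless additive $\|\phi\|_{L^2}+1$ on the right.

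The main obstacle is the second step, specifically the passage from the variational inequality to the pointwise Euler--Lagrange equation near the set $\{|\phi|=1\}$, where $\widetilde\Psi_0'$ may be singular. One must argue that admissible perturbations stay away from $\pm 1$ in measure and that the monotone structure of $\widetilde\Psi_0'$ allows a limiting argument to conclude that the endpoint behaviour is captured correctly inside $L^2$. Since the statement is quoted as \cite[Theorem 3.12.8]{Abels2007}, in practice one simply verifies that our $\widetilde E$ fits the framework there and invokes the cited theorem directly.
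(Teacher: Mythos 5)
The paper offers no proof of this proposition: it is quoted directly from \cite[Theorem 3.12.8]{Abels2007}, so the ``official'' argument is exactly your closing remark --- verify that $\widetilde E$ as defined in \eqref{defdom}--\eqref{Ephi} is proper, convex and lower semicontinuous on $L^{2}(\Omega)$ and invoke the cited theorem. Your sketch of an independent proof follows the standard route and is correct at the formal level: convexity of $\widetilde\Psi_{0}$ via \eqref{deftpsi0}, the Euler--Lagrange identity for elements of the subgradient, the $W^{2,2}$/$L^{2}$ bound \eqref{regularityphi} by testing with $-\Delta\phi$, and the $L^{p}$ bound \eqref{regLp} by testing with $|\widetilde\Psi_{0}'(\phi)|^{p-2}\widetilde\Psi_{0}'(\phi)$ combined with Neumann Calder\'on--Zygmund regularity. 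The caveat --- which you flag but resolve only by deferring to the citation --- is that for the singular potential neither the perturbation $\psi=\phi+t\varphi$ (inadmissible whenever $|\phi|=1$ on a set of positive measure) nor the chain-rule manipulations behind the two test-function computations are justified a priori; moreover $\widetilde\Psi_{0}''(\phi)|\nabla\phi|^{2}\in L^{1}(\Omega)$ is part of the conclusion \eqref{domainsubg}, not a standing assumption. The rigorous version replaces $\widetilde\Psi_{0}'$ by a Lipschitz (Yosida-type) regularization defined on all of $\mathbb{R}$, derives \eqref{regularityphi} and \eqref{regLp} uniformly for the regularized problems, and passes to the limit by monotonicity, which simultaneously yields the domain characterization. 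So your plan is sound in outline, but a self-contained proof needs that approximation layer rather than a direct truncation of perturbations; as written, your argument is complete only through the citation, which is precisely what the paper itself does.
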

\begin{remark}\label{implicitassurtion}
	It follows from \eqref{PsiReg} and the definitions \eqref{deftpsi} and \eqref{deftpsi0} that if $\phi\in\mathcal{D}(\partial\widetilde{E}),$ then $|\phi|\leqslant 1$ a.e.\ 
\end{remark}
	With the help of the subgradient $\partial\widetilde{E}$ the equation \eqref{reformeqmu} can be written as
	\begin{equation}\label{reformeqmu*}
		\begin{array}{l}
			\displaystyle \mu+{\kappa}\phi=\partial\widetilde{E}(\phi)+\xi'(\phi)\frac{|\nabla M|^{2}}{2}+\frac{\xi'(\phi)}{4\alpha^{2}}(|M|^{2}-1)^{2}\mbox{ in }Q_{T}.
		\end{array}
	\end{equation}
	It is interesting to note that for $\phi\in\mbox{dom}\,\widetilde{E}$ the free energy corresponding to the system \eqref{diffviscoelastic*} is related to $\widetilde{E}(\phi)$ via the following relation  
	\begin{equation}\nonumber
		\begin{array}{ll}
			\displaystyle \mathcal{E}_{free}
			&\displaystyle=\frac{1}{2}\int_{\Omega}\xi(\phi)|\nabla M|^{2}+\frac{1}{4\alpha^{2}}\int_{\Omega}\xi(\phi)(|M|^{2}-1)^{2}+\widetilde{E}(\phi)-\frac{\kappa}{2}\|\phi\|^{2}_{L^{2}(\Omega)}.
		\end{array}
	\end{equation}
	\subsection{Analysis of a time discrete model}\label{sec2}
    To begin with we define a suitable time discretization of the model \eqref{diffviscoelastic*} keeping in mind the reformulation \eqref{reformeqmu} (or \eqref{reformeqmu*}) of \eqref{diffviscoelastic*}$_{5}.$\\
     Let $h>0$ be a constant,
	\begin{equation}\label{assumk}
		v_{k}\in \LND,\,\,M_{k}\in W^{2,2}_{n}(\Omega),\,\,\phi_{k}\in \mathcal{D}(\partial\widetilde{E})
	\end{equation} 
	with $\mathcal{D}(\partial\widetilde{E})$ as in \eqref{domainsubg} and
	\begin{equation}\label{rhok}
	\begin{array}{l}
	 \rho_{k}=\frac{1}{2}(\widetilde{\rho}_{1}+\widetilde{\rho}_{2})+\frac{1}{2}(\widetilde{\rho}_{2}-\widetilde{\rho}_{1})\phi_{k}
	 \end{array}
	\end{equation}
	 be the information at time step $t_{k},$ $k\in\mathbb{N}_{0}.$
	The quadruple $(v_{k+1},M_{k+1},\phi_{k+1},\mu_{k+1}),$ solution at the time step $t_{k+1},$ is determined as a weak solution to the following system
	\begin{equation} \label{timediscretesystem}
		 \begin{array}{lll}
			&\displaystyle \frac{\rho_{k+1}v_{k+1}-\rho_{k}v_{k}}{h}+\dvr(\rho_{k}v_{k+1}\otimes v_{k+1})+\nabla p_{k+1}-\mu_{k+1}\nabla\phi_{k}+\dvr(v_{k+1}\otimes J_{k+1})\\
			&\displaystyle -\dvr(2\nu(\phi_{k})\mathbb{D}v_{k+1})-\frac{{\xi(\phi_{k})}}{\alpha^{2}}(|M_{k+1}|^{2}M_{k+1}-M_{k})\nabla M_{k+1}=
			-\dvr(\xi(\phi_{k})\nabla M_{k+1})\nabla M_{k+1}&\text{ in }\Omega\\[4.mm]
			&\displaystyle\dvr v_{k+1}=0 &\text{ in }\Omega\\[4.mm]
			&\displaystyle\frac{M_{k+1}-M_{k}}{h}+(v_{k+1}\cdot\nabla)M_{k+1}=\dvr(\xi(\phi_{k})\nabla M_{k+1}) -\frac{{\xi(\phi_{k})}}{\alpha^{2}}(|M_{k+1}|^{2}M_{k+1}-M_{k})&\text{ in }\Omega\\[4.mm]
			&\displaystyle\frac{\phi_{k+1}-\phi_{k}}{h}+(v_{k+1}\cdot\nabla)\phi_{k}=\Delta\mu_{k+1}&\text{ in }\Omega\\[4.mm]
			&\displaystyle\mu_{k+1}+\kappa\frac{\phi_{k+1}+\phi_{k}}{2}+\eta\Delta\phi_{k+1}-\widetilde{\Psi}'_{0}(\phi_{k+1})= H_{0}(\phi_{k+1},\phi_{k})\frac{|\nabla M_{k+1}|^{2}}{2}\\
			&\displaystyle\qquad\qquad\qquad\qquad\qquad\qquad\qquad+\frac{1}{4\alpha^{2}}{H_{0}(\phi_{k+1},\phi_{k})}(|M_{k+1}|^{2}-1)^{2}&\text{ in }\Omega\\[4.mm]
			&\displaystyle v_{k+1}=0,\ \partial_{n}M_{k+1}=0,\ \partial_{n}\phi_{k+1}=\partial_{n}\mu_{k+1}=0&\text{ on }\partial\Omega
		\end{array}
	\end{equation}
	where
	\begin{equation}\label{Jrho}
	\begin{array}{l} \displaystyle J=J_{k+1}=-\frac{\widetilde{\rho}_{2}-\widetilde{\rho}_{1}}{2}\nabla\mu_{k+1},\qquad \rho_{k+1}=\frac{1}{2}(\widetilde{\rho}_{1}+\widetilde{\rho}_{2})+\frac{1}{2}(\widetilde{\rho}_{2}-\widetilde{\rho}_{1})\phi_{k+1}
	\end{array}
	\end{equation}
	 and $H_{0}:\mathbb{R}\times\mathbb{R}\to \mathbb{R}$ is defined as
	\begin{equation}\label{H0}
		\ H_{0}(a,b)=\begin{cases}
			\frac{\xi(a)-\xi(b)}{a-b} &\mbox{if}\qquad a\neq b,\\[4.mm]
			\xi'(b) & \mbox{if}\qquad a= b.
		\end{cases}
	\end{equation}
	
	\enlargethispage{-\baselineskip}
	Now let us introduce the notion of weak solution to the time discrete system \eqref{timediscretesystem}. In the following definition of weak solution, the term $\displaystyle\int_{\Omega}(\mathrm{div}(v_{k+1}\otimes J_{k+1}))\widetilde{\psi}_{1}$ (originated from the fifth term of \eqref{timediscretesystem}$_{1}$) is replaced using the identity
	\begin{equation}\label{JKIdent}
	\int_{\Omega}(\mathrm{div}(v_{k+1}\otimes J_{k+1}))\widetilde{\psi}_{1}=\int_{\Omega}\left(\mathrm{div}J_{k+1} -\frac{\rho_{k+1}-\rho_{k}}{h}-v_{k+1}\cdot\nabla\rho_{k}\right)\frac{v_{k+1}}{2}\cdot \widetilde{\psi}_{1}+\int_{\Omega}(J_{k+1}\cdot\nabla)v_{k+1}\cdot\widetilde{\psi}_{1}
	\end{equation}
	(we refer to  \eqref{identity1}) and the derivation can be found in \cite[Remark 4.1 (i)]{AbDeGa113}. This reformulation helps mainly to obtain later the discrete energy estimate \eqref{discreteestimate}.
	\begin{mydef}\label{defweaksolta}[Weak solution to the problem \eqref{timediscretesystem}]
		Let \eqref{assumk}--\eqref{rhok} hold. The quadruple 
		\begin{equation}\label{regularityk1}
			\begin{array}{l}
				(v_{k+1},M_{k+1},\phi_{k+1},\mu_{k+1})\in \WND\times W^{2,2}_{n}(\Omega)\times \mathcal{D}(\partial\widetilde E)\times W^{2,2}_{n}(\Omega),
			\end{array}
		\end{equation}
		is a weak solution to system~\eqref{timediscretesystem} if the following identities are true
		\begin{equation}\label{identity1}
			\begin{split}
				&\int_{\Omega} \displaystyle\frac{\rho_{k+1}v_{k+1}-\rho_kv_{k}}{h}\cdot\widetilde\psi_{1}+\int_{\Omega}\dvr(\rho_{k}v_{k+1}\otimes v_{k+1})\cdot\widetilde{\psi}_{1}\\
				& \displaystyle+\int_{\Omega}\left(\mathrm{div}J_{k+1} -\frac{\rho_{k+1}-\rho_{k}}{h}-v_{k+1}\cdot\nabla\rho_{k}\right)\frac{v_{k+1}}{2}\cdot \widetilde{\psi}_{1}+\int_{\Omega}(J_{k+1}\cdot\nabla)v_{k+1}\cdot\widetilde{\psi}_{1}\\
				&\displaystyle-\int_{\Omega}\left(\frac{{\xi(\phi_{k})}}{\alpha^{2}}(|M_{k+1}|^{2}M_{k+1}-M_{k})\nabla M_{k+1}\right)\cdot\widetilde\psi_{1}+\int_{\Omega}\left(\mathrm{div}(\xi(\phi_{k})\nabla M_{k+1})\nabla M_{k+1}\right)\cdot\widetilde\psi_{1}\\
				&\displaystyle=-2\int_{\Omega}\nu(\phi_{k})\mathbb{D} v_{k+1}\cdot\mathbb{D}\widetilde\psi_{1}-\int_{\Omega}\nabla\mu_{k+1}\phi_{k}\cdot\widetilde\psi_{1}
			\end{split}
		\end{equation}
		for all $\widetilde\psi_{1}\in \WND,$\\
		\begin{equation}\label{identity2}
			\begin{split}
				&\int_{\Omega}\frac{M_{k+1}-M_{k}}{h}\cdot\widetilde\psi_{2}+\int_{\Omega}(v_{k+1}\cdot\nabla)M_{k+1}\cdot\widetilde\psi_{2}\\
				& =\int_{\Omega}\left(\dvr\left(\xi(\phi_{k})\nabla M_{k+1}\right)-\frac{\xi(\phi_{k})}{\alpha^{2}}(|M_{k+1}|^{2}M_{k+1}-M_{k})\right)\cdot\widetilde\psi_{2}
			\end{split}
		\end{equation}
		for all $\widetilde\psi_{2}\in L^2(\Omega),$\\
		\begin{equation}\label{identity3}
			\begin{split}
				&\frac{\phi_{k+1}-\phi_{k}}{h}+(v_{k+1}\cdot\nabla)\phi_{k}=\Delta\mu_{k+1}
			\end{split}
		\end{equation}
		a.e.\  in $\Omega$ and 
		\begin{equation}\label{identity4}
			\begin{split}
				&\mu_{k+1}+\kappa\frac{\phi_{k+1}+\phi_{k}}{2}-H_{0}(\phi_{k+1},\phi_{k})\frac{|\nabla M_{k+1}|^{2}}{2}-\frac{H_{0}(\phi_{k+1},\phi_{k})}{4\alpha^{2}}(|M_{k+1}|^{2}-1)^{2}\\
				&=-\eta\Delta\phi_{k+1}+\widetilde{\Psi}'_{0}(\phi_{k+1})
			\end{split}
		\end{equation}
		a.e.\  in $\Omega,$ where $J_{k+1}$ and $\rho_{k+1}$ are as defined in \eqref{Jrho} and $H_{0}$ is defines in \eqref{H0}. 
	\end{mydef}
In the next lemma we first prove an estimate of the $L^{1}$ norm of $\widetilde{\Psi}^{'}_{0}(\phi_{k+1})$ assuming the existence of a triplet $(M_{k+1},\phi_{k+1},\mu_{k+1})$ solving \eqref{identity4}. Then using the obtained estimate of $\|\widetilde{\Psi}^{'}_{0}(\phi_{k+1})\|_{L^{1}(\Omega)}$ we further prove an estimate of $\displaystyle\left|\int_{\Omega}\mu_{k+1}\right|.$ This estimate will be specifically used in Section~\ref{Thmmain} to show \eqref{MuCInterpBound}. We will also use a similar estimate while showing \eqref{estimatesofar}. 
	\begin{lem}\label{avestimates}
		Let $\phi=\phi_{k+1}\in\mathcal{D}(\partial \widetilde{E})$ and $\mu=\mu_{k+1}\in W^{1,2}(\Omega)$ solve \eqref{identity4} with $\phi_{k}\in W^{2,2}_{n}(\Omega),$ $|\phi_{k}|\leqslant 1$ in $\Omega$ such that
		$$\frac{1}{|\Omega|}\int_{\Omega}\phi_{k}=\frac{1}{|\Omega|}\int_{\Omega}\phi\in(-1,1),$$
		and $M=M_{k+1}\in W^{2,2}_{n}(\Omega).$
		Then there exists a constant $\displaystyle C=C\left(\int_{\Omega}\phi_{k}\right)>0,$ such that
		\begin{equation}\label{boundsubdiff}
			\begin{array}{ll}
				&\displaystyle\|\widetilde{\Psi}'_{0}(\phi)\|_{L^{1}(\Omega)}+\left|\int_{\Omega}\mu\right|\leqslant C\left(\|\nabla\mu\|_{L^{2}(\Omega)}+\|\nabla\phi\|^{2}_{L^{2}(\Omega)}
				+\|M\|^{2}_{W^{1,2}(\Omega)}+\|M\|^{4}_{W^{1,2}(\Omega)}+1\right).\\
			\end{array}
		\end{equation}
	\end{lem}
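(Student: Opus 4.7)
The plan is to treat the two quantities separately. The bound on $\|\widetilde{\Psi}'_0(\phi)\|_{L^1(\Omega)}$ will come from viewing \eqref{identity4} as an elliptic equation for $\phi$ with a singular zero-order term, testing it with the zero-mean function $\phi - \overline{\phi}$ (where $\overline{\phi} := \frac{1}{|\Omega|}\int_\Omega \phi$), and then invoking a standard convex-analytic trick for singular potentials. Once that $L^1$ bound is in hand, the bound on $\left|\int_\Omega \mu\right|$ will follow by integrating \eqref{identity4} over $\Omega$ and using the Neumann boundary condition.

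First I would rewrite \eqref{identity4} as $-\eta\Delta\phi + \widetilde{\Psi}'_0(\phi) = g$, where
$$g := \mu + \kappa\tfrac{\phi+\phi_k}{2} - H_0(\phi,\phi_k)\tfrac{|\nabla M|^2}{2} - \tfrac{H_0(\phi,\phi_k)}{4\alpha^2}(|M|^2-1)^2.$$
Testing against $\phi - \overline{\phi}$ and discarding the nonnegative gradient contribution $\eta\|\nabla\phi\|_{L^2(\Omega)}^2$ yields $\int_\Omega \widetilde{\Psi}'_0(\phi)(\phi - \overline{\phi}) \leq \int_\Omega g (\phi - \overline{\phi})$. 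To estimate the right-hand side I would use $|\phi|,|\overline{\phi}| \leq 1$, so that $\|\phi - \overline{\phi}\|_{L^\infty(\Omega)} \leq 2$; for the $\mu$-contribution, rewrite $\int_\Omega \mu(\phi - \overline{\phi}) = \int_\Omega (\mu - \overline{\mu})(\phi - \overline{\phi})$ and apply Poincar\'e--Wirtinger with Cauchy--Schwarz to isolate $\|\nabla\mu\|_{L^2(\Omega)}$; bound $\|H_0(\phi,\phi_k)\|_{L^\infty(\Omega)} \leq c_3$ via \eqref{XiAssum}; and invoke the Sobolev embedding $W^{1,2}(\Omega)\hookrightarrow L^4(\Omega)$ to dominate the $(|M|^2-1)^2$ term by $C(\|M\|_{W^{1,2}(\Omega)}^4 + 1)$. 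Altogether this would yield
$$\int_\Omega \widetilde{\Psi}'_0(\phi)(\phi - \overline{\phi}) \leq C\bigl(\|\nabla\mu\|_{L^2(\Omega)} + \|M\|_{W^{1,2}(\Omega)}^2 + \|M\|_{W^{1,2}(\Omega)}^4 + 1\bigr).$$

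The main obstacle is promoting this weighted integral to a genuine $L^1$ bound on $\widetilde{\Psi}'_0(\phi)$. Here I would exploit the hypothesis $\overline{\phi}\in(-1,1)$ together with the monotonicity of $\widetilde{\Psi}'_0$ (from convexity of $\widetilde{\Psi}_0$) and the blow-up $\widetilde{\Psi}'_0(s) \to \pm\infty$ as $s \to \pm 1$ inherited from \eqref{PsiReg}. Choose $\delta = \delta(\overline{\phi}) > 0$ small enough that $\overline{\phi}\in[-1+2\delta,1-2\delta]$ and that $\widetilde{\Psi}'_0$ does not change sign on $[-1,-1+\delta]\cup[1-\delta,1]$. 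On the set $A := \{|\phi| \leq 1-\delta\}$, the quantity $|\widetilde{\Psi}'_0(\phi)|$ is bounded by continuity of $\widetilde{\Psi}'_0$ on the compact interval $[-1+\delta,1-\delta]$; on $\Omega \setminus A$, the factor $\phi - \overline{\phi}$ has modulus at least $\delta$ and the same sign as $\widetilde{\Psi}'_0(\phi)$, producing the pointwise inequality $|\widetilde{\Psi}'_0(\phi)| \leq \frac{1}{\delta}\widetilde{\Psi}'_0(\phi)(\phi - \overline{\phi})$. Integrating and combining the two pieces gives $\|\widetilde{\Psi}'_0(\phi)\|_{L^1(\Omega)} \leq C(\overline{\phi})\bigl(\int_\Omega \widetilde{\Psi}'_0(\phi)(\phi - \overline{\phi}) + 1\bigr)$, which together with the previous step provides the desired $L^1$ estimate.

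Finally, to control $\left|\int_\Omega \mu\right|$, I would integrate \eqref{identity4} over $\Omega$; the Laplacian term vanishes by the Neumann boundary condition $\partial_n\phi = 0$, leaving
$$\int_\Omega \mu = \int_\Omega \widetilde{\Psi}'_0(\phi) - \kappa\int_\Omega \tfrac{\phi+\phi_k}{2} + \int_\Omega H_0(\phi,\phi_k)\tfrac{|\nabla M|^2}{2} + \int_\Omega \tfrac{H_0(\phi,\phi_k)}{4\alpha^2}(|M|^2-1)^2.$$
Bounding the first term via the $L^1$ estimate just established and the remaining three by the same arguments as in the second paragraph (with the trivial estimate $|\phi + \phi_k| \leq 2$ replacing $|\phi - \overline{\phi}| \leq 2$) then sums to \eqref{boundsubdiff}.
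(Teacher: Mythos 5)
Your proposal is correct and follows essentially the same route as the paper: test \eqref{identity4} with $\phi-\overline{\phi}$, use the $L^\infty$ bound on $H_{0}$, Poincar\'e for $\mu-\overline{\mu}$ and the Sobolev embedding for the $M$-terms, convert the weighted integral into an $L^{1}$ bound on $\widetilde{\Psi}'_{0}(\phi)$ by exploiting that $\overline{\phi}$ lies strictly inside $(-1,1)$ together with the monotonicity and blow-up of $\widetilde{\Psi}'_{0}$ at $\pm1$ (your set-splitting in $\Omega$ is just a rephrasing of the paper's pointwise inequality \eqref{ptwiswPsi}), and finally integrate \eqref{identity4} over $\Omega$ to control $\left|\int_{\Omega}\mu\right|$. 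The only cosmetic difference is that you drop the nonnegative term $\eta\|\nabla\phi\|_{L^{2}(\Omega)}^{2}$, which the paper simply carries to the right-hand side; this does not affect the result.
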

	\begin{proof}
		Without the magnetization vector $M$ a similar result was shown in \cite[Lemma 4.2]{AbDeGa113}. We will suitably adapt the line of arguments used in proving \cite[Lemma 4.2]{AbDeGa113} in our case.\\
		We test \eqref{identity4} by $(\phi-\overline{\phi}),$ where $\displaystyle\overline{\phi}=\frac{1}{|\Omega|}\int_{\Omega}\phi$ and obtain
		\begin{equation}\label{obtainidntity4}
			\begin{array}{ll}
				&\displaystyle\int_{\Omega}\mu(\phi-\overline{\phi})+\kappa\int_{\Omega}\frac{\phi+\phi_{k}}{2}(\phi-\overline{\phi}) -\int_{\Omega} H_{0}(\phi,\phi_{k})\frac{|\nabla M|^{2}}{2}(\phi-\overline{\phi})\\
				&\displaystyle-\int_{\Omega}\frac{H_{0}(\phi,\phi_{k})}{4\alpha^{2}}(|M|^{2}-1)^{2}(\phi-\overline{\phi})
				\displaystyle=\eta\int_{\Omega}\nabla\phi\cdot\nabla(\phi-\overline{\phi})+\int_{\Omega}\widetilde{\Psi}'_{0}(\phi)(\phi-\overline{\phi}).
			\end{array}
		\end{equation}
		One observes that $\displaystyle\int_{\Omega}\mu(\phi-\overline{\phi})=\int_{\Omega}(\mu-\overline{\mu})\phi,$ where $\displaystyle\overline{\mu}=\frac{1}{|\Omega|}\int_{\Omega}\mu.$ 
		Since  $\overline{\phi}\in(-1+\epsilon,1-\epsilon)$ for some $\epsilon>0$ (note that $\epsilon$ is independent of $\phi$) and $\lim\limits_{\phi\rightarrow \pm 1}\widetilde{\Psi}'_{0}(\phi)=\pm\infty,$ one has the inequality 
		\begin{equation}\label{ptwiswPsi}
		\begin{array}{l}
		\widetilde{\Psi}_{0}^{'}(\phi)(\phi-\overline{\phi})\geqslant C_{1}|\widetilde{\Psi}'_{0}(\phi)|-C_{2},
		\end{array}
		\end{equation}
		for constants $C_{1}>0$ and $C_{2}.$ The inequality \eqref{ptwiswPsi} can be proved by dividing $[-1,1]$ into three intervals $[-1,-1+\frac{\epsilon}{2}],$ $[-1+\frac{\epsilon}{2},1-\frac{\epsilon}{2}],$ $[1-\frac{\epsilon}{2},1],$ arguing by the blow up behavior of $\widetilde{\Psi}'_{0}$ at the endpoints $\{-1,1\}$ and the fact that $\widetilde{\Psi}'_{0}\in C([-1+\frac{\epsilon}{2},1-\frac{\epsilon}{2}]).$
		Hence integrating \eqref{ptwiswPsi} in $\Omega,$ we have the estimate
		\begin{equation}\label{Psiest}
			\begin{array}{l}
				\displaystyle\int_{\Omega} \widetilde{\Psi}'_{0}(\phi)(\phi-\overline{\phi})\geqslant C_{1}\int_{\Omega}|\Psi'_{0}(\phi)|-C_{3}
			\end{array}
		\end{equation}
		for constants $C_{1}>0$ and $C_{3}.$\\
		Further using \eqref{XiAssum}$_{2}$ and the definition \eqref{H0} of $H_{0}(\cdot,\cdot)$ one has $|H_{0}(\phi_{k+1},\phi_{k})|\leqslant c_{3}.$ Hence in view of  \eqref{obtainidntity4}, \eqref{Psiest} and the fact that $|\phi|,|\phi_{k}|\leqslant 1,$ we deduce
		\begin{equation}\label{estimatePsi}
			\begin{array}{ll}
				\displaystyle\int_{\Omega}|\widetilde{\Psi}'_{0}|&\displaystyle\leqslant C\left(\|\mu-\overline{\mu}\|_{L^{2}(\Omega)}+\|\nabla M\|_{L^{2}(\Omega)}^{2}+\|M\|^{4}_{L^{4}(\Omega)}+\|\nabla\phi\|_{L^{2}(\Omega)}^{2}+1\right)\\
				&\displaystyle\leqslant C\left(\|\nabla\mu\|_{L^{2}(\Omega)}+\|M\|^{2}_{W^{1,2}(\Omega)}+\|M\|^{4}_{W^{1,2}(\Omega)}+\|\nabla\phi\|_{L^{2}(\Omega)}^{2}+1\right),
			\end{array}
		\end{equation}
		where we have used Poincar\'{e}'s inequality to obtain the final step.\\
		Now we want to use the inequality \eqref{estimatePsi} to obtain an estimate of $\displaystyle\left|\int_{\Omega}\mu\right|.$ In that direction we integrate \eqref{identity4} to obtain
		\begin{equation}\label{estimatemu}
			\begin{array}{ll}
				\displaystyle\left|\int\limits_{\Omega} \mu\right|&\leqslant\displaystyle C\left(\int_{\Omega}|\widetilde{\Psi}'_{0}|+\|M\|^{2}_{W^{1,2}(\Omega)}+\|M\|^{4}_{W^{1,2}(\Omega)}+\|\nabla\phi\|_{L^{2}(\Omega)}^{2}+1\right).
			\end{array}
		\end{equation}
		Next using \eqref{estimatePsi} in \eqref{estimatemu} we furnish
		\begin{equation}\label{estimatemuav}
			\begin{array}{l}
				\displaystyle\left|\int\limits_{\Omega} \mu\right|\leqslant C\left(\|\nabla\mu\|_{L^{2}(\Omega)}+\|\nabla\phi\|^{2}_{L^{2}(\Omega)}\displaystyle+\|M\|^{2}_{W^{1,2}(\Omega)}+\|M\|^{4}_{W^{1,2}(\Omega)}+1\right).
			\end{array}
		\end{equation}
		Combining \eqref{estimatePsi} and \eqref{estimatemuav} we conclude the proof of Lemma~\ref{avestimates}.
	\end{proof}

	Next we recall the following result from \cite[Lemma 4.1]{KMS20}, which will be used to obtain \eqref{discreteestimate} (a discrete analogue of energy dissipation) in Theorem~\ref{existenceweaksoldiscrete}. 
	\begin{lem}\cite[Lemma 4.1]{KMS20} \label{algebriclem}
		Let $A,B\in \mathbb{R}^{3}.$ The following relation is true
		\begin{align}
			& \frac{1}{4}\bigl(|A|^{2}-1\bigr)^{2}-\frac{1}{4}\bigl(|B|^{2}-1\bigr)^{2}+\frac{1}{4}\bigl(|A|^{2}-|B|^{2}\bigr)^{2}+\frac{1}{2}|A\cdot(A-B)|^{2} +\frac{1}{2}|A-B|^{2}\nonumber\\
			&\leqslant (A-B)\cdot \bigl(|A|^{2}A-B\bigr)\label{AlgIneq1}.
		\end{align}
	\end{lem}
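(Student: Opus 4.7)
\textbf{Proof plan for Lemma~\ref{algebriclem}.} The statement is a purely algebraic identity plus one application of Cauchy--Schwarz, so the plan is to expand both sides in terms of the scalar quantities $|A|^{2}$, $|B|^{2}$ and $A\cdot B$, and to verify that the difference between the right-hand side and the left-hand side reduces to the non-negative Gram quantity $\tfrac12\bigl(|A|^{2}|B|^{2}-(A\cdot B)^{2}\bigr)$.

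First I would simplify the first three terms on the left-hand side. A direct computation gives
\begin{equation*}
\frac{1}{4}\bigl(|A|^{2}-1\bigr)^{2}-\frac{1}{4}\bigl(|B|^{2}-1\bigr)^{2}+\frac{1}{4}\bigl(|A|^{2}-|B|^{2}\bigr)^{2}
=\frac{1}{2}\bigl(|A|^{2}-1\bigr)\bigl(|A|^{2}-|B|^{2}\bigr),
\end{equation*}
after cancelling the $|B|^{4}$ terms and the constant $\pm\tfrac14$. Next, writing $A\cdot(A-B)=|A|^{2}-A\cdot B$, I would expand
\begin{equation*}
\frac{1}{2}|A\cdot(A-B)|^{2}=\frac{1}{2}|A|^{4}-|A|^{2}A\cdot B+\frac{1}{2}(A\cdot B)^{2},
\qquad
\frac{1}{2}|A-B|^{2}=\frac{1}{2}|A|^{2}-A\cdot B+\frac{1}{2}|B|^{2}.
\end{equation*}
Adding these to the previous display yields a closed-form expression for the entire left-hand side in the three scalar variables.

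For the right-hand side I would simply expand
\begin{equation*}
(A-B)\cdot\bigl(|A|^{2}A-B\bigr)=|A|^{4}-|A|^{2}\,A\cdot B-A\cdot B+|B|^{2}.
\end{equation*}
Subtracting the left-hand side from this and collecting terms, every monomial involving $|A|^{4}$, $|A|^{2}\,A\cdot B$, $A\cdot B$, $|A|^{2}$ and $|B|^{2}$ cancels, leaving precisely
\begin{equation*}
\mathrm{RHS}-\mathrm{LHS}=\frac{1}{2}\bigl(|A|^{2}|B|^{2}-(A\cdot B)^{2}\bigr).
\end{equation*}
This quantity is non-negative by the Cauchy--Schwarz inequality in $\mathbb{R}^{3}$, which concludes the proof.

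\textbf{Main obstacle.} There is no substantial analytical obstacle; the only real risk is a sign or coefficient slip in the bookkeeping. I would therefore handle the collapse of the first three terms into $\tfrac12(|A|^{2}-1)(|A|^{2}-|B|^{2})$ as a separate, clearly displayed step, and then keep careful track of the $|A|^{4}$, $(A\cdot B)^{2}$ and cross terms so that the final simplification to the Cauchy--Schwarz remainder is transparent. The fact that the leftover is \emph{exactly} a Gram determinant (rather than merely a non-negative expression) suggests that the lemma is sharp, which is consistent with its intended use in driving the discrete energy estimate \eqref{discreteestimate} without any slack.
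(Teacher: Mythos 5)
Your computation is correct: with $a=|A|^{2}$, $b=|B|^{2}$, $c=A\cdot B$, the first three terms indeed collapse to $\tfrac12(a-1)(a-b)$, the full left-hand side becomes $a^{2}-\tfrac12 ab+b-ac+\tfrac12 c^{2}-c$, the right-hand side is $a^{2}-ac-c+b$, and the difference is exactly $\tfrac12\bigl(|A|^{2}|B|^{2}-(A\cdot B)^{2}\bigr)\geq 0$ by Cauchy--Schwarz. The paper itself gives no proof here but only cites \cite[Lemma 4.1]{KMS20}, so your direct expansion is a complete self-contained justification of the same elementary algebraic type, with the added observation that the inequality holds with the Gram determinant as the exact remainder.
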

	Now we state and prove the central result of this section which corresponds to the existence of weak solution to the time discrete system \eqref{timediscretesystem}.
	\begin{thm}\label{existenceweaksoldiscrete}[Existence of weak solution to the problem \eqref{timediscretesystem}]
		Let Assumption \ref{Assumption}, \eqref{assumk} and \eqref{rhok} hold. Then there exists a quadruple $(v_{k+1},M_{k+1},\phi_{k+1},\mu_{k+1})$ which satisfies \eqref{regularityk1} and solves the identities \eqref{identity1}--\eqref{identity4}. Moreover, the following discrete version of the energy estimate holds
		\begin{equation}\label{discreteestimate}
			\begin{split}
				&  E_{tot}(v_{k+1},M_{k+1},\phi_{k+1})+2h\int_{\Omega}\nu(\phi_{k})|\mathbb{D}v_{k+1}|^{2}
				+h\int_{\Omega}|\nabla\mu_{k+1}|^{2}\\ &+h\int_{\Omega}\left|\mathrm{div}(\xi(\phi_{k})\nabla M_{k+1})-\frac{\xi(\phi_{k})}{\alpha^{2}}(|M_{k+1}|^{2}M_{k+1}-M_{k})\right|^{2}
				\leqslant E_{tot}(v_{k},M_{k},\phi_{k}),
			\end{split}
		\end{equation} 
		where $E_{tot}(v,M,\phi)$ is as defined in \eqref{defEtot}.
	\end{thm}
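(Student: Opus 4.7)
The plan is to construct the quadruple as a fixed point of a suitable solution operator and to derive \eqref{discreteestimate} by testing the discrete identities with well-chosen functions, exploiting the convex-splitting design of \eqref{timediscretesystem}. Define a map $\mathcal T:(\widehat v,\widehat M)\mapsto(v_{k+1},M_{k+1})$ on a bounded, closed, convex subset of $L^2_{\dvr}(\Omega)\times W^{1,2}(\Omega)$ as follows. First, given $(\widehat v,\widehat M)$, solve the Cahn-Hilliard pair \eqref{identity3}-\eqref{identity4} for $(\phi_{k+1},\mu_{k+1})$: substituting $\mu_{k+1}$ from \eqref{identity4} into \eqref{identity3} yields a stationary biharmonic-type equation whose nonlinear principal part is governed by the maximal monotone operator $\partial\widetilde E + \tfrac{\kappa}{2}\mathrm{Id}$, so existence and the regularity $\phi_{k+1}\in\mathcal D(\partial\widetilde E)$ follow from Proposition~\ref{ellipticreg} combined with standard monotone operator theory. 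Next, solve \eqref{identity2} for $M_{k+1}\in W^{2,2}_n(\Omega)$ with transport frozen at $\widehat v$: the non-degeneracy of $\xi(\phi_k)$ together with the monotonicity of $x\mapsto|x|^2 x$ makes the underlying operator uniformly monotone and coercive, so Browder-Minty applies. Finally, solve the Stokes-like problem \eqref{identity1} for $v_{k+1}\in\WND$ via Lax-Milgram once $\phi_{k+1},\mu_{k+1},M_{k+1}$ are in hand; the reformulation \eqref{JKIdent} cancels the antisymmetric part of the convection, preserving coercivity. The operator $\mathcal T$ is continuous and, by the regularity gain in each sub-problem together with Rellich's theorem, compact; hence Schauder's fixed point theorem provides a solution once an a priori bound is in place.

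To produce simultaneously this a priori bound and \eqref{discreteestimate}, I would test with $\widetilde\psi_1=v_{k+1}$ in \eqref{identity1}, $\widetilde\psi_2=-\dvr(\xi(\phi_k)\nabla M_{k+1})+\tfrac{\xi(\phi_k)}{\alpha^2}(|M_{k+1}|^2 M_{k+1}-M_k)$ in \eqref{identity2}, $\mu_{k+1}$ in \eqref{identity3}, and pointwise multiply \eqref{identity4} by $(\phi_{k+1}-\phi_k)/h$ before integrating. Upon summation, the pressure drops out by $\dvr v_{k+1}=0$; the cross terms $\int\nabla\mu_{k+1}\phi_k\cdot v_{k+1}$, $\int(v_{k+1}\cdot\nabla)\phi_k\,\mu_{k+1}$ and $\int H_0(\phi_{k+1},\phi_k)\tfrac{|\nabla M_{k+1}|^2}{2}\tfrac{\phi_{k+1}-\phi_k}{h}$ cancel with their counterparts stemming from \eqref{identity1}, \eqref{identity4} and \eqref{identity2} respectively. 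The definition \eqref{H0} of $H_0$ is calibrated precisely so that $(\phi_{k+1}-\phi_k)H_0(\phi_{k+1},\phi_k)=\xi(\phi_{k+1})-\xi(\phi_k)$, which is the algebraic mechanism producing the telescoping of the magnetic contributions $\tfrac12\xi(\phi)|\nabla M|^2$ and $\tfrac{\xi(\phi)}{4\alpha^2}(|M|^2-1)^2$ in $E_{tot}$.

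The four remaining telescoping pieces are handled by: (i) the discrete identity $(\rho_{k+1}v_{k+1}-\rho_k v_k)\cdot v_{k+1}=\tfrac12(\rho_{k+1}|v_{k+1}|^2-\rho_k|v_k|^2)+\tfrac12(\rho_{k+1}-\rho_k)|v_{k+1}|^2$, whose last term combines with the reformulated $J$-contribution from \eqref{JKIdent} and the discrete mass balance $\tfrac{\rho_{k+1}-\rho_k}{h}+\dvr(\rho_k v_{k+1})+\dvr J_{k+1}=0$ derived from \eqref{identity3} and the affine dependence \eqref{rhok} of $\rho$ on $\phi$; (ii) the convexity of the quadratic $\tfrac12|\cdot|^2$, which telescopes $\tfrac{\eta}{2}|\nabla\phi|^2$; (iii) the subgradient inequality $\widetilde\Psi_0'(\phi_{k+1})(\phi_{k+1}-\phi_k)\geq\widetilde\Psi_0(\phi_{k+1})-\widetilde\Psi_0(\phi_k)$, which telescopes the singular convex part; and (iv) Lemma~\ref{algebriclem} applied to the non-monotone cubic magnetization term, which telescopes the saturation penalization and, crucially, deposits the additional non-negative remainders $\tfrac{1}{4}(|M_{k+1}|^2-|M_k|^2)^2$, $\tfrac{1}{2}|M_{k+1}\cdot(M_{k+1}-M_k)|^2$ and $\tfrac{1}{2}|M_{k+1}-M_k|^2$ that can be absorbed into the dissipation $\bigl\|\dvr(\xi(\phi_k)\nabla M_{k+1})-\tfrac{\xi(\phi_k)}{\alpha^2}(|M_{k+1}|^2 M_{k+1}-M_k)\bigr\|^2_{L^2}$ on the left-hand side of \eqref{discreteestimate} by means of a Cauchy-Schwarz decomposition.

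The main technical obstacle is the interplay between the fixed point argument and the implicit constraint $|\phi_{k+1}|\leq 1$: this is automatic provided $\phi_{k+1}\in\mathcal D(\partial\widetilde E)$ (cf.\ Remark~\ref{implicitassurtion}), and the latter is delivered by Proposition~\ref{ellipticreg}, but one must verify that the target space of $\mathcal T$ is compatible with the domain on which the magnetization and velocity sub-problems are posed. The energy estimate \eqref{discreteestimate} then prescribes the radius of the ball on which Schauder's theorem is applied; passage to the limit in any intermediate approximation (for instance a Galerkin truncation of the velocity-magnetization subsystem) poses no difficulty because all nonlinearities involve compact embeddings and the convex-splitting quantities are continuous in the relevant norms, yielding the required fixed point and thereby the desired quadruple with regularity \eqref{regularityk1}.
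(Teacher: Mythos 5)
Your derivation of the discrete energy inequality is essentially the paper's: the same four test functions, the calibration $H_0(\phi_{k+1},\phi_k)(\phi_{k+1}-\phi_k)=\xi(\phi_{k+1})-\xi(\phi_k)$, the convexity of $\widetilde\Psi_0$, and Lemma~\ref{algebriclem} for the cubic term. Two small remarks there: your discrete kinetic-energy identity is missing the term $\tfrac{1}{2}\rho_k|v_{k+1}-v_k|^2$ (the correct expansion is \eqref{someidentities}$_{(iii)}$); since that term is nonnegative and is dropped anyway, the final inequality is unaffected, but the identity as you state it is false. Also, the nonnegative remainders produced by Lemma~\ref{algebriclem} need no ``absorption by Cauchy--Schwarz'': they sit on the left-hand side with a favorable sign and are simply discarded.

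The genuine gap is in the existence argument. You build a decoupled solution map $\mathcal T:(\widehat v,\widehat M)\mapsto(v_{k+1},M_{k+1})$ and propose to apply Schauder's theorem on a ball ``whose radius is prescribed by the energy estimate \eqref{discreteestimate}.'' But \eqref{discreteestimate} is available only for solutions of the \emph{fully coupled} system, i.e.\ for fixed points of $\mathcal T$: its proof hinges on cancellations between the sub-equations (the magnetization convection against the Kelvin-force terms in \eqref{identity1}, the Cahn--Hilliard transport against $\nabla\mu_{k+1}\phi_k\cdot v_{k+1}$, and the $H_0$-terms against the $\xi(\phi_k)\nabla M_{k+1}\cdot\nabla(M_{k+1}-M_k)$ contribution). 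When the frozen inputs $(\widehat v,\widehat M)$ differ from the outputs these cancellations are not available, and the bounds you obtain on $\mathcal T(\widehat v,\widehat M)$ grow polynomially in $\|(\widehat v,\widehat M)\|$ (e.g.\ through the source terms $H_0\,|\nabla\widehat M|^2/2$ in \eqref{identity4} and the frozen transport $\int(\widehat v\cdot\nabla)\phi_k\,\mu_{k+1}$), so Schauder's self-mapping hypothesis $\mathcal T(B_R)\subset B_R$ cannot be verified for fixed $h$. This is exactly why the paper applies the Leray--Schauder principle to $\mathcal F_k\circ\mathcal N_k^{-1}$, which only requires a priori bounds on solutions of the homotopy equation $z=\lambda(\mathcal F_k\circ\mathcal N_k^{-1})z$, $\lambda\in[0,1]$; these solutions retain the coupled cancellation structure (with $\lambda$-dependent corrections handled in \eqref{energylambda}--\eqref{normboundM}), and the bound requires additional ingredients your sketch does not supply: the control of the mean value $\int_\Omega\mu_{k+1}$ via Lemma~\ref{avestimates}, and the elliptic bootstrap from \eqref{divMinq}, \eqref{eqmu} and Proposition~\ref{ellipticreg} that upgrades the energy-level bounds to the $W^{2,2}$-- resp.\ $\mathcal D(\partial\widetilde E)$--norms entering \eqref{regularityk1}. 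To repair your argument you would either have to switch to a Leray--Schauder/Schaefer-type scheme with these homotopy estimates, or introduce an intermediate approximation in which the self-map property can actually be checked; as written, the fixed-point step does not close.
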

	\begin{proof}
		For simplicity in notations we will omit the subscript $k+1$ and we use the notation $(v,M,\phi,\mu,J,\rho)=(v_{k+1},M_{k+1},\phi_{k+1},\mu_{k+1},J_{k+1},\rho_{k+1})$ in the rest of the proof. We will perform the proof in two steps (cf.\ Section \ref{inqsatsweak} and \ref{existencetimediscrete}).\\
		\subsection{Any weak solution $(v, M, \phi, \mu)$ of \eqref{timediscretesystem} in the sense of Definition~\ref{defweaksolta} satisfies \eqref{discreteestimate}--\eqref{defEtot}}\label{inqsatsweak}
	    In the following computations we will need some identities in the spirit of \cite{AbDeGa113}. We gather those identities in the following and refer to the proof of \cite[Lemma 4.3]{AbDeGa113} for details.
		\begin{equation}\label{someidentities}
			\begin{array}{ll}
				&(i)\,\,\displaystyle\int_{\Omega}\left((\mbox{div}\,J)\frac{v}{2}+\left(J\cdot\nabla\right)v\right)\cdot v=\int_{\Omega}\mbox{div}\,\left(J\frac{|v|^{2}}{2}\right)=0,\\
				&(ii)\,\,\displaystyle \int_{\Omega} \left(\mbox{div}(\rho_{k}v\otimes v)-(\nabla\rho_{k}\cdot v)\frac{v}{2}\right)\cdot v=0,\\
				&(iii)\,\,\displaystyle (\rho v-\rho_{k}v_{k})\cdot v=\left(\rho\frac{|v|^{2}}{2}-\rho_{k}\frac{|v_{k}|^{2}}{2}\right)+(\rho-\rho_{k})\frac{|v|^{2}}{2}+\rho_{k}\frac{|v-v_{k}|^{2}}{2}.
			\end{array}
		\end{equation}
	 First we consider the test function $\widetilde{\psi}_{1}=v$ in \eqref{identity1} and use the identities \eqref{someidentities} to render 
		\begin{equation}\label{identity1*}
			\begin{split}
				&\int_{\Omega} \frac{\rho|v|^{2}-\rho_{k}|v_{k}|^{2}}{2h}
				+\int_{\Omega}\rho_{k}\frac{|v-v_{k}|^{2}}{2h}-\int_{\Omega}\left(\frac{{\xi(\phi_{k})}}{\alpha^{2}}(|M|^{2}M-M_{k})\nabla M\right)\cdot v\\
				&+\int_{\Omega}\left(\mathrm{div}(\xi(\phi_{k})\nabla M)\nabla M\right)\cdot v
				=-2\int_{\Omega}\nu(\phi_{k})|\mathbb{D} v|^{2}-\int_{\Omega}(v\cdot\nabla)\mu\phi_{k}.
			\end{split}
		\end{equation}
		Next choosing $\widetilde\psi_2=-\mbox{div}(\xi(\phi_{k})\nabla M)+\frac{\xi(\phi_{k})}{\alpha^{2}}(|M|^{2}M-M_{k})$ in \eqref{identity2} we infer
		\begin{equation}\label{identity2*}
			\begin{array}{ll}
				&\displaystyle\int_{\Omega}\frac{(M-M_{k})}{h}\cdot \Bigl(-\mbox{div}(\xi(\phi_{k})\nabla M)
				+\frac{\xi(\phi_{k})}{\alpha^{2}}(|M|^{2}M-M_{k})\Bigr)\\
				&\displaystyle +\int_{\Omega}(v\cdot\nabla)M\cdot\Bigl(-\mbox{div}(\xi(\phi_{k})\nabla M)
				+\frac{\xi(\phi_{k})}{\alpha^{2}}(|M|^{2}M-M_{k})\Bigr)\\
				&\displaystyle+\int_{\Omega}\Bigl|\mbox{div}(\xi(\phi_{k})\nabla M)-\frac{\xi(\phi_{k})}{\alpha^{2}}(|M|^{2}M-M_{k})\Bigr|^{2}=0.
			\end{array}
		\end{equation}
		Multiplying \eqref{identity3} by $\mu$ and integrating over $\Omega$ one has
		\begin{equation}\label{identity3*}
			\begin{array}{l}
				\displaystyle\int_{\Omega}\frac{\phi-\phi_{k}}{h}\mu-\int_{\Omega}(v\cdot\nabla)\mu\phi_{k}=-\int_{\Omega}|\nabla\mu|^{2}.
			\end{array}
		\end{equation}
		Finally, multiplying \eqref{identity4} by $\displaystyle-\frac{\phi-\phi_{k}}{h}$ and integrating in $\Omega$ we have 
		\begin{equation}\label{identity4*}
			\begin{array}{ll}
				&\displaystyle-\int_{\Omega}\mu\frac{\phi-\phi_{k}}{h}-\kappa\int_{\Omega}\frac{\phi^{2}-\phi_{k}^{2}}{2h}+\int_{\Omega} H_{0}(\phi,\phi_{k})\frac{|\nabla M|^{2}}{2}\frac{(\phi-\phi_{k})}{h}+\int_{\Omega}\frac{H_{0}(\phi,\phi_{k})}{4\alpha^{2}}(|M|^{2}-1)^{2}\frac{(\phi-\phi_{k})}{h}\\
				&\displaystyle=-\eta\int_{\Omega}\nabla\phi\cdot\nabla\frac{(\phi-\phi_{k})}{h}-\int_{\Omega}\widetilde{\Psi}'_{0}(\phi)\frac{(\phi-\phi_{k})}{h}.
			\end{array}
		\end{equation}
		Adding the expressions \eqref{identity1*}--\eqref{identity4*} and recalling \eqref{H0}, we have
		\begin{equation}\label{energy1stp}
			\begin{split}
				&\frac{1}{2}\int_{\Omega}(\rho|v|^{2}-\rho_{k}|v_{k}|^{2})
				+\int_{\Omega}\rho_{k}\frac{|v-v_{k}|^{2}}{2}
				+2h\int_{\Omega}\nu(\phi_{k})|\mathbb{D} v|^{2}
				+\int_{\Omega}(M-M_{k})\cdot \Bigl(-\mbox{div}(\xi(\phi_{k})\nabla M)\\
				&+\frac{\xi(\phi_{k})}{\alpha^{2}}(|M|^{2}M-M_{k})\Bigr)+h\int_{\Omega}\Bigl|\mbox{div}(\xi(\phi_{k})\nabla M)-\frac{\xi(\phi_{k})}{\alpha^{2}}(|M|^{2}M-M_{k})\Bigr|^{2}\\
				&\displaystyle +\frac{1}{2}\int_{\Omega}\bigl(\xi(\phi)-\xi(\phi_{k})\bigr)|\nabla M|^{2}+\frac{1}{4\alpha^{2}}\int_{\Omega}\bigl(\xi(\phi)-\xi(\phi_{k})\bigr)(|M|^{2}-1)^{2}-\frac{\kappa}{2}\int_{\Omega}({\phi^{2}-\phi_{k}^{2}})\\
				&+\eta\int_{\Omega}\nabla\phi\cdot(\nabla\phi-\nabla\phi_{k})+\int_{\Omega}\widetilde{\Psi}_{0}'(\phi)(\phi-\phi_{k})+h\int_{\Omega}|\nabla \mu|^{2}=0.
			\end{split}
		\end{equation}
		Integrating by parts the fourth term of \eqref{energy1stp}, using
		\begin{equation}\label{algebricidnty}
			\begin{split}
				\ A\cdot(A-B)=\frac{|A|^{2}}{2}-\frac{|B|^{2}}{2}+\frac{|A-B|^{2}}{2}\,\,\text{ for all }\,\,A,\,B\in\mathbb{R}^{m},\,m\in\mathbb{N},
			\end{split}
		\end{equation}
		to expand $\xi(\phi_{k})\nabla M\cdot(\nabla M-\nabla M_{k})$ and $\nabla\phi\cdot(\nabla\phi-\nabla\phi_{k})$ respectively
		and using Lemma~\ref{algebriclem} on the term $(M-M_{k})\cdot\frac{\xi(\phi_{k})}{\alpha^{2}}(|M|^{2}M-M_{k})$,
		we render
		\begin{equation}\label{energy2stp}
			\begin{split}
				&\frac{1}{2}\int_{\Omega}(\rho|v|^{2}-\rho_{k}|v_{k}|^{2})
				+\int_{\Omega}\rho_{k}\frac{|v-v_{k}|^{2}}{2}+2h\int_{\Omega}\nu(\phi_{k})|\mathbb{D} v|^{2}+\frac{1}{2}\int_{\Omega}\xi(\phi)|\nabla M|^{2}\\
				&-\frac{1}{2}\int_{\Omega}\xi(\phi_{k})|\nabla M_{k}|^{2}+\frac{1}{2}\int_{\Omega}\xi(\phi_{k})|\nabla M-\nabla M_{k}|^{2}+\frac{1}{4\alpha^{2}}\int_{\Omega}\xi(\phi)\bigl(|M|^{2}-1\bigr)^{2}\\
				&-\frac{1}{4\alpha^{2}}\int_{\Omega}\xi(\phi_{k})\bigl(|M_{k}|^{2}-1\bigr)^{2}
				+\frac{1}{4\alpha^{2}}\int_{\Omega}\xi(\phi_{k})\bigl(|M|^{2}-|M_{k}|^{2}\bigr)^{2}\\
				&+\frac{1}{2\alpha^{2}}\int_{\Omega}\xi(\phi_{k})|M\cdot(M-M_{k})|^{2}
				+\frac{1}{2\alpha^{2}}\int_{\Omega}\xi(\phi_{k})|M-M_{k}|^{2}\\
				& +h\int_{\Omega}\Bigl|\mbox{div}(\xi(\phi_{k})\nabla M)-\frac{\xi(\phi_{k})}{\alpha^{2}}(|M|^{2}M-M_{k})\Bigr|^{2}+\frac{\eta}{2}\int_{\Omega}|\nabla\phi|^{2}-\frac{\eta}{2}\int_{\Omega}|\nabla\phi_{k}|^{2}\\
				&+\frac{\eta}{2}\int_{\Omega}|\nabla\phi-\nabla\phi_{k}|^{2}+\int_{\Omega}\left(\widetilde{\Psi}_{0}(\phi)-\widetilde{\Psi}_{0}(\phi_{k})\right)-\frac{\kappa}{2}\int_{\Omega}({\phi^{2}-\phi_{k}^{2}})+
				h\int_{\Omega}|\nabla \mu|^{2}\leqslant0,
			\end{split}
		\end{equation}
		where we have used
		\begin{equation}\label{ConvConseq}\
		\int_{\Omega}\widetilde{\Psi}_{0}'(\phi)(\phi-\phi_{k})\geqslant \int_{\Omega}\left(\widetilde{\Psi}_{0}(\phi)-\widetilde{\Psi}_{0}(\phi_{k})\right),
		\end{equation}
	    which follows from the convexity of $\widetilde{\Psi}_{0}.$ Dropping some positive terms from the left hand side of the inequality \eqref{energy2stp} and recalling \eqref{deftpsi0}, we conclude the obtainment of the discrete energy estimate \eqref{discreteestimate}.
		\subsection{Proof of the existence of weak solutions to \texorpdfstring{\eqref{timediscretesystem}}{someref}}\label{existencetimediscrete}
		We will apply the Leray-Schauder fixed point principle to prove the existence of a weak solution to the discretized system \eqref{timediscretesystem}. We start by considering the following spaces 
		\begin{equation}\label{defXY}
			\begin{split}
				& X=\WND\times W^{2,2}_{n}(\Omega)\times \mathcal{D}(\partial\widetilde{E})\times W^{2,2}_{n}(\Omega),\\
				& Y= \bigl(\WND\bigr)'\times L^2(\Omega)\times L^{2}(\Omega)\times L^{2}(\Omega),
			\end{split}
		\end{equation}
		with the norm defined as the sum of the individual components of the Cartesian products. We will write \eqref{timediscretesystem} in operator notation and for that we introduce $\mathcal{N}_{k},\mathcal{F}_{k}:X\to Y$. For $w=(v,M,\phi,\mu)\in X,$ the operator $\mathcal{N}_{k}$ is defined as follows
		\begin{equation}\label{Lk}
			\begin{split}
				\mathcal{N}_{k}(w)=\begin{pmatrix}
					\mathcal{A}v\\[3.mm]
					\displaystyle-\dvr(\xi(\phi_{k})\nabla M)+\frac{\xi(\phi_{k})}{\alpha^{2}}(|M|^{2}M-M_{k})+\int_\Omega M\\[3.mm]
					\partial\widetilde{E}(\phi)+\phi\\[3.mm]
					\displaystyle-\Delta\mu+\int_\Omega\mu
				\end{pmatrix} ,
			\end{split}
		\end{equation}
		where $\mathcal{A}: \WND\to \bigl(\WND\bigr)'$ is given for all $v\in \WND$ by
		\begin{align*}
			\langle\mathcal{A}v,\widetilde\psi_1\rangle&=2\int_{\Omega}\nu(\phi_{k})\mathbb{D} v\cdot\mathbb{D} \widetilde\psi_1\text{ for all }\widetilde\psi_1\in\WND.
		\end{align*}
	     For $w=(v,M,\phi,\mu)\in X,$ the operator $\mathcal{F}_{k}$ is defined as 
		\begin{equation}\label{Fk}
			\begin{array}{ll}
				\mathcal{F}_{k}(w)=\begin{pmatrix}
					\displaystyle\ -\frac{\rho v-\rho_{k}v_{k}}{h}-\mbox{div}(\rho_{k}v\otimes v)-\nabla\mu\phi_{k}+\frac{\xi(\phi_{k})}{\alpha^{2}}(|M|^{2}M-M_{k})\nabla M-\dvr(\xi(\phi_{k})\nabla M)\nabla M\\
		\hfill \displaystyle-\frac{1}{2}\left(\dvr\,J-\frac{\rho-\rho_{k}}{h}-v\cdot\nabla\rho_{k}\right)v-\left(J\cdot\nabla\right)v
					\\[8.mm]
					\displaystyle\ -\frac{M-M_{k}}{h}-(v\cdot\nabla)M+\int_\Omega M\\[8.mm]
					\displaystyle \ \mu+\kappa\frac{(\phi+\phi_{k})}{2}- H_{0}(\phi,\phi_{k})\frac{|\nabla M|^{2}}{2}-\frac{H_{0}(\phi,\phi_{k})}{4\alpha^{2}}(|M|^{2}-1)^{2}+{\phi}\\[8.mm]
					 \displaystyle\  -\frac{\phi-\phi_{k}}{h}-(v\cdot\nabla)\phi_{k}+\int_\Omega\mu
				\end{pmatrix}.
			\end{array}
		\end{equation}
		One observes that $w=w_{k+1}(=(v_{k+1},M_{k+1},\phi_{k+1},\mu_{k+1})\in X)$ is a weak solution to the time discrete problem \eqref{timediscretesystem} iff the following holds
		$$\mathcal{N}_{k}(w)=\mathcal{F}_k(w)\,\,\mbox{in}\,\, Y.$$
		To prove the existence of a solution to this operator equation we next show some properties of the operators $\mathcal{N}_{k}$ and $\mathcal{F}_{k}.$
		
		\subsubsection{Invertibility and continuity of the inverse of $\mathcal{N}_{k}$ between suitable spaces }\label{invertNk}
		Let us consider the operator $\mathcal{N}_{k}$ component wise. It is well known that $\mathcal{A}: \WND\to \bigl(\WND\bigr)'$ is bounded and invertible, and $\mathcal{A}^{-1}:\bigl(\WND\bigr)'\longrightarrow\WND$ is bounded and continuous. For a proof one can adapt the arguments we are going to use to prove similar issues for the second component of $\mathcal{N}_{k}$. We choose to present a detailed argument for the second component since it is more involved than the first one.\\
		We remark that the second component of  $\mathcal{N}_{k}$ defines a bounded operator from $W^{2,2}_{n}(\Omega)\longrightarrow L^{2}(\Omega).$ 
		As a first step towards proving that the second component of $\mathcal{N}_{k}$ admits of a bounded and continuous inverse from $L^{2}(\Omega)$ to $W^{2,2}_{n}(\Omega),$ we first claim that the operator
		\begin{equation}\label{Moperator}
	    	\mathcal{B}_{k}(M)=-\dvr_{N}(\xi(\phi_{k})\nabla M)+\frac{\xi(\phi_{k})}{\alpha^{2}}(|M|^{2}M-M_{k})+\int_\Omega M:W^{1,2}(\Omega)\longrightarrow (W^{1,2}(\Omega))',
		\end{equation} 
		where $\dvr_{N}$ is interpreted in the following weak sense
		$$\langle-\dvr_N \Phi,\psi_2\rangle=\int_\Omega \Phi\cdot\nabla\psi_2\text{ for all }\psi_2\in W^{1,2}(\Omega)\,\,\mbox{and}\,\, \Phi\in L^{2}(\Omega),$$
	  is invertible and $\mathcal{B}_{k}^{-1}:(W^{1,2}(\Omega))'\longrightarrow W^{1,2}(\Omega)$ is bounded and continuous. In order to show this, we consider an arbitrary couple $M_{1},\,\,M_{2}\in W^{1,2}(\Omega).$ Then we compute the following duality product
		\begin{equation}\label{computedifference}
			\begin{split}
				&\langle \mathcal{B}_{k}(M_{1})-\mathcal{B}_{k}(M_{2}),M_{1}-M_{2}\rangle\\
				&=\int_\Omega\xi(\phi_k)|\nabla (M_1-M_2)|^2
				+\int_\Omega\frac{\xi(\phi_k)}{\alpha^2}\bigl(|M_1|^2M_1-|M_2|^2M_2\bigr)\cdot(M_1-M_2)+\left(\int_\Omega M_1-M_2\right)^2\\
				&=I_{1}+I_{2}+I_{3}.
			\end{split}
		\end{equation} 
		One observes $I_1\geq c_1\|\nabla (M_1-M_2)\|^2_{L^2(\Omega)}$ since $\xi$ is non degenerate (cf.\ \eqref{XiAssum}). The monotonicity of $\alpha\mapsto |\alpha|^2\alpha$ implies $I_2\geq 0.$ Employing the lower bound on $\xi$ again we infer $I_1+I_3\geq c\|M_1-M_2\|^2_{W^{1,2}(\Omega)}.$\\
		Hence 
		$$\displaystyle\langle \mathcal{B}_{k}(M_{1})-\mathcal{B}_{k}(M_{2}),M_{1}-M_{2}\rangle \geqslant c\|M_{1}-M_{2}\|^{2}_{W^{1,2}(\Omega)},$$
		implying $\mathcal{B}_{k}:W^{1,2}(\Omega)\longrightarrow (W^{1,2}(\Omega))'$ is strongly monotone.\\
		Since $W^{1,2}(\Omega)\hookrightarrow L^{6}(\Omega),$ one justifies the boundedness of $\mathcal{B}_{k}:W^{1,2}(\Omega)\longrightarrow (W^{1,2}(\Omega))'.$ Now using the Lebesgue dominated convergence theorem one checks that  $\mathcal{B}_k$ is radially continuous on $W^{1,2}(\Omega)$, i.e., for each pair $M,\widetilde M\in W^{1,2}(\Omega)$ the function $t\in\mathbb{R}\mapsto \langle\mathcal{B}_k(M+t\widetilde{M}),\widetilde M\rangle$ is continuous. It is not hard to check that $\langle \mathcal{B}_k(M),M\rangle \geq c\|M\|^2_{W^{1,2}(\Omega)}-c_k$, for any $M\in W^{1,2}(\Omega)$ with $c_k$ depending on $M_k$. The latter inequality implies that $\mathcal{B}_k$ is coercive on $W^{1,2}(\Omega)$, i.e.,
		$$\lim_{\|M\|_{W^{1,2}(\Omega)}\to\infty}\frac{\langle\mathcal{B}_k(M),M\rangle}{\|M\|_{W^{1,2}(\Omega)}}=\infty.$$ 
		Using \cite[Theorem 2.14]{Ro05} we obtain the existence of the inverse operator $\mathcal{B}_k^{-1}:(W^{1,2}(\Omega))'\to W^{1,2}(\Omega)$ that is bounded and Lipschitz continuous.\\
		We now claim that
		\begin{equation}\label{claim}
		\begin{array}{l}
		 \mathcal{B}^{-1}_{k}: L^{2}(\Omega)\longrightarrow W^{2,2}_{n}(\Omega)\,\,\mbox{is bounded and continuous.}
		\end{array}
		\end{equation}
		The proof of this claim can be performed by using a boot-strap argument and using the regularity results for the following set of equations 
		\begin{equation}\nonumber
		\begin{alignedat}{2}
		\Delta M=&\frac{1}{\xi(\phi_k)}\Bigl(F-\xi'(\phi_{k})\nabla M\nabla\phi_k+\frac{\xi(\phi_{k})}{\alpha^{2}}\bigl(|M|^{2}M-M_{k}\bigr)\Bigr)\quad&&\text{ in }\Omega,\\
		\partial_{n}M=&0&&\text{ on }\partial\Omega,
		\end{alignedat}
		\end{equation}
	    for any $F\in L^{2}(\Omega)$, where we apply the fact that $\mathcal{B}_k^{-1}:(W^{1,2}(\Omega))'\to W^{1,2}(\Omega)$ is bounded and continuous. We refer the readers to \cite[Section 4.2.1, p~14-15]{KMS20} for a concrete proof. This proves our claim that the second component of $\mathcal{N}_{k}$ has a bounded and continuous inverse from $W^{2,2}_{n}(\Omega)$ and $L^{2}(\Omega).$\\[2.mm]
		Next we consider the third component of $\mathcal{N}_{k}.$ Recalling the definition of $\widetilde{E}$ from \eqref{defdom}--\eqref{Ephi}, it can be justified in view of Proposition \ref{ellipticreg} that
		\begin{equation}\label{PartET}
		    \partial\widetilde{E}+I:\mathcal{D}(\partial\widetilde{E})\longrightarrow L^{2}(\Omega)\text{ is invertible with  bounded inverse}
		\end{equation}
		($\mathcal{D}(\partial{\widetilde{E}})$ is identified with a subspace of $W^{2,2}_{n}(\Omega)$ as in \eqref{domainsubg}), where $I:\mathcal{D}(\partial\widetilde{E})\longrightarrow L^{2}(\Omega)$ is the inclusion map. Moreover, we can follow arguments from \cite[p.~466-467]{AbDeGa113} to show that the inverse operator
		\begin{equation}
		    (\partial\widetilde E+I)^{-1}:L^{2}(\Omega)\longrightarrow W^{2-s,2}_n(\Omega)\text{ is continuous for any }s\in(0,\tfrac{1}{4}).
		\end{equation}
		From now on we fix $s=\frac{1}{8}$ for definiteness.\\
		Finally let us consider the last component of $\mathcal{N}_{k}.$ From standard elliptic theory, the operator 
		$$-\Delta(\cdot)+\int_{\Omega}\cdot: W^{2,2}_{n}(\Omega)\longrightarrow L^{2}(\Omega)$$ 
		is bounded invertible with bounded and continuous inverse.\\
		In summary we have shown that  
		\begin{equation}\label{NKInvert}
		\text{the map }\mathcal{N}_{k}:X\longrightarrow Y\text{ is bounded, invertible, the inverse is bounded}   
		\end{equation}
		and the inverse map is continuous from $Y$ to $\widetilde{X},$ where 
		\begin{equation}\label{defXt}
			\begin{array}{l}
				\widetilde{X}=\WND\times W^{2,2}_{n}(\Omega)\times W^{\frac{15}{8},2}_{n}(\Omega)\times W^{2,2}_{n}(\Omega).
			\end{array}
		\end{equation}
		Next we show that $\mathcal{F}_{k}:\widetilde{X}\longrightarrow Y$ is continuous and compact.
		
		\subsubsection{The operator $\mathcal{F}_{k}:\widetilde{X}\longrightarrow Y$ is continuous and compact}
		For the operator $\mathcal{F}_{k}=(\mathcal{F}_{k}^{1},\mathcal{F}_{k}^{2},\mathcal{F}_{k}^{3},\mathcal{F}_{k}^{4}),$ we will verify the continuity and compactness component wise.\\
		Let us start with $\mathcal{F}^{1}_k:\widetilde{X}\longrightarrow (W^{1,2}_{0,\dvr}(\Omega))'.$  In this direction we will collect estimates of terms appearing in the expression of $\mathcal{F}_{k}^{1}.$ The following estimates are obtained by using H\"{o}lder's inequality and standard Sobolev embeddings.
		\begin{equation}\label{boundednesFk1}
		\begin{split}
		\|\rho v\|_{L^{\frac{3}{2}}(\Omega)}&\leqslant C\|v\|_{W^{1,2}(\Omega)}\left(\|\phi\|_{L^{2}(\Omega)}+1\right),\\
		\|\mbox{div}(\rho_{k}v\otimes v)\|_{L^{\frac{3}{2}}(\Omega)}&\leqslant C_{k}\|v\|^{2}_{W^{1,2}(\Omega)},\\
		\|\nabla\mu\phi_{k}\|_{L^{\frac{3}{2}}(\Omega)}&\leqslant C_{k}\|\mu\|_{W^{2,2}(\Omega)},\\
		\|\xi(\phi_{k})(|M|^{2}M-M_{k})\nabla M\|_{L^{\frac{3}{2}}(\Omega)}&\leqslant C_{k}\left(\|M\|^{4}_{W^{2,2}(\Omega)}+\|M\|_{W^{2,2}(\Omega)}\right),\\
		\|\mathrm{div}(\xi(\phi_{k})\nabla M)\nabla M\|_{L^{\frac{3}{2}}(\Omega)}&\leqslant C_{k}\|M\|^{2}_{W^{2,2}(\Omega)},\\
		\|\left(\mbox{div}J\right)v\|_{L^{\frac{3}{2}}(\Omega)}&\leqslant C\|v\|_{W^{1,2}(\Omega)}\|\mu\|_{W^{2,2}(\Omega)},\\
		\|\left(J\cdot\nabla\right)v\|_{L^{\frac{3}{2}}(\Omega)}&\leqslant C\|v\|_{W^{1,2}(\Omega)}\|\mu\|_{W^{2,2}(\Omega)},\\
		\|(v\cdot\nabla\rho_{k})v\|_{L^{\frac{3}{2}}(\Omega)}&\leqslant C_{k}\|v\|^{2}_{W^{1,2}(\Omega)}.
		\end{split}
		\end{equation}
		Hence the estimates above prove the boundedness of $\mathcal{F}_{k}^{1}$ from $\widetilde{X}$ to $L^{\frac{3}{2}}(\Omega).$ One can use similar estimates to show that $\mathcal{F}_{k}^{1}$ is continuous from $\widetilde{X}$ to $L^{\frac{3}{2}}(\Omega).$ Next the compact embedding $L^{\frac{3}{2}}(\Omega)\longrightarrow (W^{1,2}_{0,\dvr}(\Omega))'$ furnishes the continuity and compactness of $\mathcal{F}^{1}_{k}:\widetilde{X}\longrightarrow (W^{1,2}_{0,\dvr}(\Omega))'.$\\
		Now we prove that $\mathcal{F}_{k}^{2}:\widetilde{X}\longrightarrow L^{2}(\Omega)$ is continuous and compact. One first observes
		\begin{equation}\label{vgMW132}
		\displaystyle\|(v\cdot\nabla)M\|_{W^{1,\frac{3}{2}}(\Omega)}\leqslant C\|v\|_{W^{1,2}(\Omega)}\|M\|_{W^{2,2}(\Omega)}
		\end{equation}
		 as in \cite[p.~16]{KMS20}. The continuity of $\mathcal{F}_{k}^{2}:\widetilde{X}\longrightarrow W^{1,\frac{3}{2}}(\Omega)$ relies on a similar estimate and can be concluded without any difficulty. Further in view of the compactness of the embedding of $W^{1,\frac{3}{2}}(\Omega)$ to $L^2(\Omega)$ the asserted continuity and compactness follows.\\
		 Next we show that $\mathcal{F}_{k}^{3}: \widetilde{X}\longrightarrow L^{2}(\Omega)$ is continuous and compact. For the proof we take a different route than the ones used for $\mathcal{F}^{i}_{k},$ $i\in\{1,2\}.$ First we introduce
		 $$\widetilde{Y}=L^{2}(\Omega)\times W^{\frac{15}{8},2}(\Omega)\times W^{\frac{7}{4},2}(\Omega)\times W^{\frac{15}{8},2}(\Omega).$$
		 One observes that the embedding $\widetilde{X}\longrightarrow\widetilde{Y}$ is compact (since in dimension three the embedding $W^{m+k,2}\hookrightarrow W^{m,2}$ is compact for any $0<k<\frac{3}{2}$). We study the boundedness and continuity of the operator $\mathcal{G}:\widetilde Y\longrightarrow L^2(\Omega)$ that is defined as in the third component of \eqref{Fk}. Once we verify its boundedness and continuity we immediately conclude that $\mathcal F^3_k:\widetilde{X}\longrightarrow L^{2}(\Omega)$ is bounded, continuous and compact as the composition of the embedding $\widetilde{X}\longrightarrow \widetilde{Y}$ and $\mathcal G$.  In order to conclude the boundedness of $\mathcal G$, we collect the following estimates:
		 \begin{equation}\label{boundednesFk3}
		 \begin{split}
		 	\|H_{0}(\phi,\phi_{k})|\nabla M|^{2}\|_{L^{2}(\Omega)}&\leqslant C_{k}\|M\|^{2}_{W^{\frac{15}{8},2}(\Omega)},\\[1.mm]
		 	\|H_{0}(\phi,\phi_{k})(|M|^{2}-1)^{2}\|_{L^{2}(\Omega)}&\leqslant C_{k}\left(\|M\|^{4}_{W^{\frac{15}{8},2}(\Omega)}+1\right).
		 \end{split}
		 \end{equation}
		 Indeed, since $\xi(\cdot)\in C^{1}(\mathbb{R})$, one infers $\|H_{0}(\phi,\phi_{k})\|_{L^{\infty}(\Omega)}\leqslant C_{k}$ by using 
		 the mean value theorem and the upper bound of $\xi'(\cdot)$ (cf.\ assumption \eqref{XiAssum}). Further the fact that $\nabla M$ is bounded in $W^{\frac{7}{8},2}(\Omega)$ and the continuous embedding $W^{\frac{7}{8},2}(\Omega)\hookrightarrow L^{4}(\Omega)$ proves \eqref{boundednesFk3}$_{1}.$ Whereas \eqref{boundednesFk3}$_{2}$ is a consequence of the continuous embedding $W^{\frac{15}{8},2}(\Omega)\hookrightarrow L^{\infty}(\Omega).$ The boundedness and continuity of the linear terms in the expression of $\mathcal{F}_{k}^{3}$ are trivially concluded.\\ 
		 In the spirit of the boundedness estimates \eqref{boundednesFk3}, the continuity of the nonlinear terms in $\mathcal{F}_{k}^{3},$  i.e.\  $\displaystyle\frac{1}{2}H_{0}(\phi,\phi_{k})|\nabla M|^{2}$ and $\displaystyle\frac{1}{4\alpha^2}H_{0}(\phi,\phi_{k})(|M|^{2}-1)^{2},$ can be proved by the arguments as in \cite[p.~16]{KMS20} adjusted to the current set-up.
		Hence we have proved that $\mathcal{F}_{k}^{3}: \widetilde{X}\longrightarrow L^{2}(\Omega)$ is continuous and compact.\\
		Next we consider $\mathcal{F}_{k}^{4}.$ In the spirit of \eqref{vgMW132} we derive the following estimate
		\begin{equation}\label{vgpW132}
		\displaystyle\|(v\cdot\nabla)\phi_{k}\|_{W^{1,\frac{3}{2}}(\Omega)} \leqslant C_{k}\|v\|_{W^{ ,2}(\Omega)},
		\end{equation}
		which verifies the boundedness of $\mathcal{F}_{k}^{4}:\widetilde{X}\longrightarrow W^{1,\frac{3}{2}}(\Omega).$ By \eqref{vgpW132} the continuity of $\mathcal{F}_{k}^{4}:\widetilde{X}\longrightarrow W^{1,\frac{3}{2}}(\Omega)$ can be concluded in a straight forward manner. Next using the compact embedding $W^{1,\frac{3}{2}}(\Omega)\hookrightarrow L^{2}(\Omega),$ one at once renders that $\mathcal{F}_{k}^{4}:\widetilde{X}\longrightarrow L^{2}(\Omega)$ is continuous and compact.\\
		In view of the arguments above we finally have proved that $\mathcal{F}_{k}:\widetilde{X}\longrightarrow Y$ is continuous and compact.
	\subsubsection{The fixed point argument}
		We now show the existence of a $w\in X$ (we recall the definition of $X$ from \eqref{defXY}$_{1}$) satisfying
		\begin{equation}\label{fxdpntmap}
			\begin{array}{l}
				\mathcal{N}_{k}(w)=\mathcal{F}_{k}(w)\,\,\mbox{in}\,\, Y.
			\end{array}
		\end{equation}
		For that purpose it is sufficient to prove the existence of a fixed point of the operator $\mathcal{F}_{k}\circ \mathcal{N}^{-1}_{k}$ on $Y$, i.e., the existence of $z\in Y$ satisfying
		\begin{equation}\label{2ndfxd}
			\begin{array}{l}
				z=(\mathcal{F}_{k}\circ \mathcal{N}^{-1}_{k})z \,\,\mbox{in}\,\,Y,
			\end{array}
		\end{equation}
		since the invertibility of the operator $\mathcal{N}_{k}:{X}\to Y$ implies the obtainment of $w\in {X}$ satisfying \eqref{fxdpntmap} by using $w=\mathcal{N}^{-1}_{k}(z)$.\\
		In order to show the existence of a fixed point of the operator equation \eqref{2ndfxd} we apply the Leray-Schauder fixed point theorem \cite[Theorem 10.3]{GilTr01} to the continuous and compact operator $\mathcal{F}_{k}\circ \mathcal{N}^{-1}_{k}:Y\longrightarrow Y.$ To this end we verify that:
		\begin{equation}\label{lerayfxpt}
			\begin{array}{l}
				\mbox{There exists}\,\, r>0 \,\,\mbox{such that if}\,\, z\in Y\,\, \mbox{solves}\,\, z=\lambda(\mathcal{F}_{k}\circ \mathcal{N}^{-1}_{k})z\,\,\mbox{with}\,\,\lambda\in[0,1],\\
				\mbox{then it holds}\,\,\|z\|_{Y}\leqslant r.
			\end{array}
		\end{equation}
		Let $z\in Y$ satisfy $z=\lambda(\mathcal{F}_{k}\circ \mathcal{N}^{-1}_{k})z$ in $Y$ with some $\lambda\in [0,1]$. Then
		$$w=(v,M,\phi,\mu)=\mathcal{N}_{k}^{-1}z,$$
		solves
		\begin{equation}\label{LkFk}
			\begin{array}{l}
		    \mathcal{N}_{k}(w)-\lambda\mathcal{F}_{k}(w)=0\,\,\mbox{in}\,\, Y.
			\end{array}
		\end{equation}
		Let us first prove that
		\begin{equation}\label{boundtw}
			\|w\|_{\widetilde{X}}\leqslant C_{k},
		\end{equation}
		with $C_k$ independent of $\lambda\in[0,1].$ Then we will bootstrap the regularity to have
			\begin{equation}\label{boundw}
			\|w\|_{{X}}\leqslant C_{k},
			\end{equation}
			with $C_k$ independent of $\lambda\in[0,1],$ 
		from which \eqref{lerayfxpt} follows due to the boundedness of $\mathcal{N}_k: X\longrightarrow Y$, cf.\ \eqref{NKInvert}.  One recalls the definitions of $\mathcal{N}_{k}$ and $\mathcal{F}_{k}$ from \eqref{Lk} and \eqref{Fk}, tests the first component of \eqref{LkFk} by $v$, the second component by $-\dvr(\xi(\phi_k)\nabla M)+\frac{\xi(\phi_{k})}{\alpha^{2}}(|M|^{2}M-M_{k})$, the third component by $\frac{\phi-\phi_{k}}{h}$ and the fourth component by $\mu$. The application of \eqref{algebricidnty}, Lemma~\ref{algebriclem} and identities \eqref{someidentities} (similar arguments leading to \eqref{energy2stp} from \eqref{energy1stp}) yield the following after dropping some positive terms from the left hand side (similar to the obtainment of \eqref{discreteestimate} from \eqref{energy2stp})
		\begin{equation}\label{energylambda}
			\begin{split}
				&\frac{\lambda}{h}\left(\frac{1}{2}\int_{\Omega}\rho|v|^{2}-\frac{1}{2}\int_{\Omega}\rho_{k}|v_{k}|^{2}\right)+2\int_{\Omega}\nu(\phi_{k})|\mathbb{D}v|^{2}+\frac{\lambda}{h}\left(\frac{1}{2}\int_{\Omega}\xi(\phi)|\nabla M|^{2}
				-\frac{1}{2}\int_{\Omega}\xi(\phi_{k})|\nabla M_{k}|^{2}\right)\\
				& +\frac{\lambda}{h}\left(\frac{1}{4\alpha^{2}}\int_{\Omega}\xi(\phi)\bigl(|M|^{2}-1\bigr)^{2}-\frac{1}{4\alpha^{2}}\int_{\Omega}\xi(\phi_{k})\bigl(|M_{k}|^{2}-1\bigr)^{2}\right)\\
				& +\int_{\Omega}\left|\mbox{div}(\xi(\phi_{k})\nabla M)-\frac{\xi(\phi_{k})}{\alpha^{2}}(|M|^{2}M-M_{k})\right|^{2}+\frac{1}{h}\left(\frac{\eta}{2}\int_{\Omega}|\nabla\phi|^{2}-\frac{\eta}{2}\int_{\Omega}|\nabla\phi_{k}|^{2}\right)\\
				&+\frac{1}{h}\int_{\Omega}\left(\widetilde{\Psi}_{0}(\phi)-\widetilde{\Psi}_{0}(\phi_{k})\right)-\lambda\frac{1}{h}\int_{\Omega}\kappa\frac{(\phi^{2}-\phi_{k}^{2})}{2}+\int_{\Omega}|\nabla \mu|^{2}+(1-\lambda)\left(\int_\Omega\mu\right) ^{2}\\
				&\displaystyle+\frac{(1-\lambda)}{h}\int_{\Omega}\left(\frac{\phi^{2}}{2}-\frac{\phi^{2}_{k}}{2}\right)
				+(1-\lambda)\int_\Omega M \int_\Omega\left(-\mbox{div}(\xi(\phi_{k})\nabla M)+\frac{\xi(\phi_{k})}{\alpha^{2}}(|M|^{2}M-M_{k})\right)\leqslant0.
			\end{split}
		\end{equation}
		Once again we recall that in obtaining the above inequality one expands $\xi(\phi_{k})\nabla M\cdot(\nabla M-\nabla M_{k})$ and $\nabla\phi\cdot(\nabla\phi-\nabla\phi_{k})$ by using \eqref{algebricidnty}
		and uses Lemma~\ref{algebriclem} to expand the term $(M-M_{k})\cdot\frac{\xi(\phi_{k})}{\alpha^{2}}(|M|^{2}M-M_{k}).$\\
		In order to obtain the inequality \eqref{energylambda} we also have used \eqref{ConvConseq}.
		When $0\leqslant\lambda<1,$ we use Young's and H\"{o}lder's inequality to estimate the term appearing in the last line of \eqref{energylambda} to infer:
		\begin{equation}\label{Jensen}
			\begin{split}
				&\left|(1-\lambda)\int_\Omega M \int_{\Omega}\left(-\mbox{div}(\xi(\phi_{k})\nabla M)+\frac{\xi(\phi_{k})}{\alpha^{2}}(|M|^{2}M-M_{k})\right)\right|\\
				&\leqslant (1-\lambda)\epsilon|\Omega|\int_{\Omega}\left|\mbox{div}(\xi(\phi_{k})\nabla M)-\frac{\xi(\phi_{k})}{\alpha^{2}}(|M|^{2}M-M_{k})\right|^{2}+c\frac{1-\lambda}{\epsilon}\left(\int_\Omega M\right)^{2}
			\end{split}
		\end{equation}
		for some positive parameter $\epsilon>0$.\\
		Since $|1-\lambda|\leqslant 1,$ for small enough choice of the parameter $\epsilon>0,$ the first term on the right hand side  \eqref{Jensen} can be absorbed in the fifth summand appearing on the left hand side of \eqref{energylambda}. We will now estimate the second term on the right hand side of \eqref{Jensen}. In that direction we follow the arguments used to show \cite[p.~18, $(4.32)$]{KMS20} to obtain
		\begin{equation}\label{normboundM}
		\ \int_{\Omega}|\nabla M|^{2}+\int_{\Omega} |M|^{4}+(1-\lambda)\left(\int_\Omega M\right)^{2}\leqslant C_{k},
		\end{equation}
		where $C_{k}>0$ is independent of $\lambda>0.$\\
	   For a small enough choice of the parameter  $\epsilon>0,$ using \eqref{normboundM} and \eqref{Jensen} in \eqref{energylambda} we obtain in particular 
		\begin{equation}\label{energylambda*}
			\begin{split}
				& 2h\int_{\Omega}\nu(\phi_{k})|\mathbb{D}v|^{2}
				+\frac{h}{2}\int_{\Omega}\left|\mbox{div}(\xi(\phi_{k})\nabla M)-\frac{\xi(\phi_{k})}{\alpha^{2}}(|M|^{2}M-M_{k})\right|^{2}+\frac{\eta}{2}\int_{\Omega}|\nabla\phi|^{2}\\
				&+\int_{\Omega}\widetilde{\Psi}_{0}(\phi)+h\int_{\Omega}|\nabla \mu|^{2}+h(1-\lambda)\left(\int_\Omega \mu \right)^{2}-\lambda\int_{\Omega}\kappa \frac{\phi^{2}}{2}\\
				&\leqslant \int_{\Omega}\frac{\rho_{k}|v_{k}|^{2}}{2}+\frac{1}{2}\int_{\Omega}\phi_{k}^{2}+\frac{\eta}{2}\int_{\Omega}|\nabla\phi_{k}|^{2}+\int_{\Omega}|\kappa|\frac{\phi^{2}_{k}}{2}+\int_{\Omega}\widetilde{\Psi}_{0}(\phi_{k})+\frac{1}{2}\int_{\Omega}\xi(\phi_{k})|\nabla M_{k}|^{2}\\
				&+\frac{1}{4\alpha^{2}}\int_{\Omega}\xi(\phi_{k})\bigl(|M_{k}|^{2}-1\bigr)^{2}+ C_{k} \leqslant C_{k}.
			\end{split}
		\end{equation}
		Indeed, we obtain \eqref{energylambda*} from \eqref{energylambda} by using the positive lower bound on $\rho$ from \eqref{BoundsOnRho}, which follows since $w=(v,M,\phi,\mu)=\mathcal{N}_{k}^{-1}z\in X$, cf.\ \eqref{NKInvert}, implying  $\phi\in\mathcal{D}(\partial\widetilde{E})$ and hence $\phi\in[-1,1]$ a.e.  The fact that $\phi\in[-1,1]$ a.e.\  and $\lambda\in[0,1]$ alongside $\widetilde{\Psi}_0\in C([-1,1])$ allows us to obtain
		$$\left|\int_{\Omega}\widetilde{\Psi}_0(\phi)\right|\leqslant C\quad\mbox{and}\quad \left|\int_{\Omega}\kappa\frac{\phi^{2}}{2}\right|\leqslant C,$$ 
		for some positive constant $C$ and hence the term $\displaystyle\int_{\Omega}\widetilde{\Psi}_{0}(\phi)$ and $\displaystyle-\lambda\int_{\Omega}\kappa\frac{\phi^{2}}{2}$ can be dropped from the left hand side of \eqref{energylambda*}. Hence  
		\begin{equation}\label{energylambda**}
			\begin{split}
				& 2h\int_{\Omega}\nu(\phi_{k})|\mathbb{D}v|^{2}
				+\frac{h}{2}\int_{\Omega}\left|\mbox{div}(\xi(\phi_{k})\nabla M)-\frac{\xi(\phi_{k})}{\alpha^{2}}(|M|^{2}M-M_{k})\right|^{2}+\frac{\eta}{2}\int_{\Omega}|\nabla\phi|^{2}\\
				&+h\int_{\Omega}|\nabla \mu|^{2}+h(1-\lambda)\left(\int_\Omega \mu \right)^{2}\leqslant C_{k}. 
			\end{split}
		\end{equation}
		One uses \eqref{energylambda**}, \eqref{nuAssum}, Korn's and Poincar\'e's inequality and the fact that $\phi\in[-1,1]$ a.e.\ to render $\|v\|_{W^{1,2}(\Omega)}+\|\phi\|_{W^{1,2}(\Omega)}\leqslant C_{k}.$  We conclude $\|M\|_{W^{1,2}(\Omega)}\leqslant C_{k}$ independently of $\lambda$ from \eqref{normboundM}. The second term of \eqref{energylambda**}$_{1}$ provides
		\begin{equation}\label{divMinq}
			\left\|\dvr(\xi(\phi_{k})\nabla M)-\frac{\xi(\phi_{k})}{\alpha^{2}}(|M|^{2}M-M_{k})\right\|_{L^{2}(\Omega)}\leqslant C_{k}.
		\end{equation}
		One can now use elliptic regularity results and a bootstrap argument (exactly as the one used to prove the claim \eqref{claim}, cf.\ \cite[Section 4.2.1, p.~14--15]{KMS20}) to furnish that $\|M\|_{W^{2,2}(\Omega)}\leqslant C_k$ from \eqref{divMinq}. \\
		Next one obtains  $\|\nabla\mu\|_{L^{2}(\Omega)}\leqslant C_{k}$ from \eqref{energylambda**}. In view of Poincar\'{e}'s inequality it is sufficient to show 
		\begin{equation}\label{MuAvEst}
			\left|\int_\Omega\mu\right|\leqslant C_k,
		\end{equation}
		in order to prove $\|\mu\|_{W^{1,2}(\Omega)}\leqslant C_{k}.$ 
		For $\lambda\in[0,\frac{1}{2}),$ \eqref{MuAvEst} follows from the estimate of the last term which appears in the left hand side of \eqref{energylambda**}. For $\lambda\in[\frac{1}{2},1]$ we follow similar arguments used to show \eqref{boundsubdiff}. Hence \eqref{MuAvEst} holds independently of the values of $\lambda\in[0,1]$ and as a consequence $\|\mu\|_{W^{1,2}(\Omega)}\leqslant C_{k}$ follows.\\
		So far we have obtained the following
		\begin{equation}\label{estimatesofar}
			\begin{array}{l}
				\displaystyle\|v\|_{W^{1,2}(\Omega)}+\|M\|_{W^{2,2}(\Omega)}+\|\phi\|_{W^{1,2}(\Omega)}+\|\mu\|_{W^{1,2}(\Omega)}\leqslant C_{k}.
			\end{array}
		\end{equation}
		Further one recalls that $\mu$ solves the following equation
		\begin{equation}\label{eqmu}
			\begin{alignedat}{2}
				\displaystyle-\Delta\mu+\int_\Omega{\mu}&=-\lambda\frac{\phi-\phi_{k}}{h}-\lambda(v\cdot\nabla)\phi_{k}+\lambda\int_\Omega\mu&&\quad\mbox{in}\quad\Omega,\\
				 \partial_{n}\mu&=0 &&\quad\mbox{in}\quad\partial\Omega.
			\end{alignedat}
		\end{equation}
		In view of the fact that $\lambda\in[0,1]$ and the estimate \eqref{estimatesofar} we observe that the right hand side of \eqref{eqmu}$_1$ can be estimated in $L^{2}(\Omega)$ and hence by standard elliptic regularity theory
		$$\|\mu\|_{W^{2,2}(\Omega)}\leqslant C_{k}.$$
		Further, from the identity
		\begin{equation}\label{pwiseidentity}
			\begin{array}{l}
				\displaystyle\partial\widetilde{E}(\phi)+\phi=\lambda\phi+\lambda\mu+\lambda\kappa\frac{\phi+\phi_{k}}{2}-\lambda H_{0}(\phi,\phi_{k})\frac{|\nabla M|^{2}}{2}-\lambda\frac{H_{0}(\phi,\phi_{k})}{4\alpha^{2}}\left(|M|^{2}-1\right)^{2}
			\end{array}
		\end{equation} 
		and \eqref{estimatesofar} one has
		\begin{equation}\label{estdtE}
			\begin{array}{l}
				\|\partial\widetilde{E}(\phi)+\phi\|_{L^{2}(\Omega)}\leqslant C_{k}.
			\end{array}
		\end{equation}
		Inequality \eqref{estdtE} along with the estimate of $\phi$ from \eqref{estimatesofar} imply that $\|\partial\widetilde{E}(\phi)\|_{L^{2}(\Omega)}\leqslant C_{k}.$ Next in view of the inequality \eqref{regularityphi} one in particular concludes that
		$$\|\phi\|_{W^{\frac{15}{8},2}(\Omega)}\leqslant C_{k}$$ and hence altogether we have
		\begin{equation}\label{twtX}
			\begin{array}{l}
				\|w\|_{\widetilde{X}}+\|\partial\widetilde{E}(\phi)\|_{L^{2}(\Omega)}=\|(v,M,\phi,\mu)\|_{\widetilde X}+\|\partial\widetilde{E}(\phi)\|_{L^{2}(\Omega)}\leqslant C_{k}.
			\end{array}
		\end{equation}
		Once again using \eqref{regularityphi} and \eqref{twtX} one at once concludes \eqref{boundw} and consequently proves \eqref{lerayfxpt}. Finally the fact that $\mathcal{N}_{k}:X\rightarrow Y$ has a bounded inverse yields the existence of a fixed point to the operator equation \eqref{fxdpntmap}. This finishes the proof of Theorem~\ref{existenceweaksoldiscrete}. 
	\end{proof}
	
	\section{Proof of Theorem~\ref{Thm:Main}}\label{Thmmain}	
	Let $T>0$ be fixed. Let $0=t_0<t_1<\ldots <t_k<\ldots$, $k\in\eN_0$ be a strictly increasing sequence such that for each $k\in\eN_0$ $t_{k+1}-t_k=h$ where $h=\frac{1}{N}$ for $N\in\eN$ fixed. Applying Theorem~\ref{existenceweaksoldiscrete} successively, we construct a sequence $\{(v_{k+1},M_{k+1},\phi_{k+1},\mu_{k+1})\}$, $k\in\eN_0$ of solutions to problem \eqref{timediscretesystem} assuming $(v_k,M_k,\phi_k)\in L^2_{\dvr}(\Omega)\times W^{2,2}_n(\Omega)\times W^{2,2}_n(\Omega)$ with $-1\leqslant\phi_{k}\leqslant 1.$ Obviously, the assumption $(M_0,\phi_0)\in W^{1,2}(\Omega)\times W^{1,2}(\Omega)$ excludes the possibility of application of Theorem~\ref{existenceweaksoldiscrete} directly. Instead,
	we consider sequences $\{M_0^N\}\subset W^{2,2}_n(\Omega)$, $\{\phi^N_0\}\subset W^{2,2}_n(\Omega)$ with $|\phi^{N}_{0}|\leqslant 1$ such that
	\begin{equation}\label{InitMPhiApp}
		\begin{alignedat}{2}
			M^N_0&\to M_0&&\text{ in }W^{1,2}(\Omega),\\
			\phi^N_0&\to \phi_0&&\text{ in }W^{1,2}(\Omega)
		\end{alignedat}
	\end{equation}
	as $N\to\infty$. Such an approximating sequence $\{\phi^{N}_{0}\}$ can be constructed by solving a heat equation with initial data $\phi_{0},$ setting $\phi^{N}_{0}$ as the solution to the heat equation at $t=\frac{1}{N}$ and using parabolic regularity. The details of this construction can be found in \cite[Section 5.1, p.~471]{AbDeGa113}. Similar arguments apply in constructing $\{M^{N}_{0}\}.$ Adapting the notation from \cite{KMS20} we introduce two types of interpolants related to the unknowns. At first we fix $N\in\eN$. The piecewise constant interpolants of $(v,M,\phi)$ are defined on $[-h,\infty)$ and the one of $\mu$ on $[0,\infty)$ via
	\begin{equation}\label{definterpolinitial}
		v^N(t)=v_0,\ M^N(t)=M^N_0,\ \phi^N(t)=\phi^N_0\text{ for }t\in[-h,0)
	\end{equation}
	and 
	\begin{equation}\label{definterpol}
		f^N(t)=f_k\text{ for }t\in[(k-1)h,kh),
	\end{equation}
	where $f^N$ stands for the interpolants $v^N$, $M^N$, $\phi^N$, $\mu^N$ and $f_k$ represents the corresponding $v_k$, $M_k$, $\phi_k$ and $\mu_k$, $k\in \eN$. 
	We note that
	\begin{equation}\label{rhoN}
	\begin{array}{ll}
	 \rho^N=\frac{1}{2}(\widetilde\rho_1+\widetilde\rho_2)+\frac{1}{2}(\widetilde\rho_2-\widetilde\rho_1)\phi^N.
	 \end{array}
	\end{equation}
	Next, a piecewise affine interpolant $\widetilde f^N$ is defined as
	\begin{equation}\label{AffInterpolantDef}
		\widetilde f^N(t)=\frac{(k+1)h-t}{h}f^N(t-h)+\frac{t-kh}{h}f^N(t)\text{ for }t\in[kh,(k+1)h),\,\,k\in\mathbb{N}_{0}.
	\end{equation}
	For our purposes it is sufficient to consider only the piecewise affine interpolants $\{\widetilde {\rho v}^N\}$, $\{\widetilde M^N\}$ and $\{\widetilde \phi^N\}$, where the convention $(\rho v)^{N}=\rho^{N}v^{N}$ is used. We also introduce the notation for the shift and the difference quotient in time of a function $f$ as follows
	\begin{align*}
		f_h(t)=&f(t-h),\\
		\partial^{-}_{t,h}f(t)=&\frac{1}{h}(f-f_h)(t).
	\end{align*}
	As immediate consequences of the latter definition and \eqref{AffInterpolantDef} one gets
	\begin{equation}\label{AffInterpProp}
		\begin{split}
			\tder\widetilde f^N(t)&=\partial^-_{t,h}f^N(t)\,\,\mbox{for all}\,t\in [kh,(k+1)h),\,k\in\mathbb{N}_{0},\\
			\|\widetilde f^N\|_{L^p(0,\tau;X)}&\leqslant \|f^N\|_{L^p(0,\tau;X)}+\|f^N_h\|_{L^p(0,\tau;X)}\text{ for any }p\in[1,\infty]\,\,\mbox{and}\,\,\tau>0.
		\end{split}
	\end{equation}
	We will next state identities that are satisfied by interpolants. Let $\tau\in(0,\infty)$ is chosen arbitrarily. We find $k_\tau\in\eN_0$ such that $\tau\in [k_\tau h,(k_{\tau}+1)h)$. Further, we fix an arbitrary $\psi_1\in L^2(0,\infty;V(\Omega))$, set $\widetilde\psi_1=\int_{kh}^b\psi_1$ in \eqref{identity1}, where 
	\begin{equation*}
		b= \begin{cases}
			(k+1)h & k<k_\tau,\\
			\tau & k=k_\tau,
		\end{cases}
	\end{equation*}
	and sum the resulting expressions over $k\in\{0,1,\dots, k_\tau\}$ to obtain 	
	\begin{equation}\label{MomEqInterp}
		\begin{split}
			\int_0^\tau&\left(\int_{\Omega} \partial^-_{t,h}(\rho^Nv^N)\cdot\psi_{1}-\int_{\Omega}(\rho^N_hv^N\otimes v^N)\cdot\nabla{\psi}_{1}-\int_{\Omega}(v^N\otimes J^N)\cdot \nabla\psi_{1}\right.\\
			&\quad\left.-\int_{\Omega}\left(\frac{{\xi(\phi^N_h)}}{\alpha^{2}}(|M^N|^{2}M^N-M^N_h)\nabla M^N\right)\cdot\psi_{1}+\int_{\Omega}\left(\dvr\left(\xi(\phi^N_h)\nabla M^N\right)\nabla M^N\right)\cdot\psi_{1}\right)\\
			=&\int_0^\tau\left(-2\int_{\Omega}\nu(\phi^{N}_{h})\mathbb{D}v^{N}\cdot\mathbb{D}\psi_{1}-\int_{\Omega}\nabla\mu^N\phi^N_h\cdot\psi_{1}\right)
		\end{split}
	\end{equation}
	for all $\tau\in (0,\infty)$ and $\psi_1\in L^2(0,\infty;V(\Omega)),$ where $\displaystyle J^{N}=-\frac{\widetilde{\rho}_{2}-\widetilde{\rho}_{1}}{2}\nabla\mu^{N}$. In obtaining \eqref{MomEqInterp} from \eqref{identity1}, we once again use identity \eqref{JKIdent}. 
	Similarly, we get
	\begin{equation}\label{intidentity2}
		\begin{split}
			&\int_{0}^{\tau}\left(\int_{\Omega}\partial^{-}_{t,h}M^{N}\cdot\psi_{2}+\int_{\Omega}(v^{N}\cdot\nabla)M^{N}\cdot\psi_{2}\right)\\
			& =\int_{0}^{\tau}\int_{\Omega}\left(\dvr({\xi(\phi^{N}_{h})}\nabla
			M^{N})-\frac{\xi(\phi^{N}_{h})}{\alpha^{2}}(|M^{N}|^{2}M^{N}-M^{N}_{h})\right)\cdot\psi_{2},
		\end{split}
	\end{equation}
	for all $\tau\in(0,\infty)$ and $\psi_{2}\in L^2(0,\infty;W^{1,2}(\Omega))$.
	Moreover, by obvious manipulations we deduce from \eqref{identity3} that
	\begin{equation}\label{intidentity3}
		\int_{0}^{\tau}\left(\int_{\Omega} \partial^{-}_{t,h}\phi^{N}\psi_{3}+\int_{\Omega}(v^{N}\cdot\nabla)\phi^{N}_{h}\psi_{3}\right)=-\int_{0}^{\tau}\int_{\Omega}\nabla\mu^{N}\cdot\nabla\psi_{3}
	\end{equation}
	for all $\tau\in(0,\infty)$ and $\psi_3\in L^2(0,\infty;W^{1,2}(\Omega))$ and from \eqref{identity4} it follows that
	\begin{equation}\label{EqMuInterp}
		\begin{split}
			&\int_0^\tau\int_\Omega\left(\mu^N+\kappa\frac{\phi^N+\phi^N_h}{2}-H_{0}(\phi^N,\phi^N_h)\frac{|\nabla M^N|^{2}}{2}-\frac{H_{0}(\phi^N,\phi^N_h)}{4\alpha^{2}}(|M^N|^{2}-1)^{2}\right)\psi_4\\
			&=\int_0^\tau\int_\Omega\left(-\eta\Delta\phi^N+\widetilde{\Psi}'_{0}(\phi^N)\right)\psi_4
		\end{split}
	\end{equation}
	for all $\tau\in(0,\infty)$ and $\psi_4\in L^\infty(0,\tau;L^\infty(\Omega))$.
	\subsection{Compactness of sequences of interpolants}
	The goal of this section is to collect all the necessary convergences of (sub)sequences of interpolants allowing for the passage $h\to 0$ (equivalently $N\rightarrow\infty$) in order to show the existence of a weak solution to the original problem. 
	
	\subsubsection{Uniform bounds on sequences of interpolants and compactness}\label{Ubic}
	In order to obtain the uniform bounds we begin with the energy inequality for the interpolants $v^N$, $M^N$, $\phi^N$ and $\mu^N$. Summing in \eqref{discreteestimate} over $k\in\eN_0$ we obtain
	\begin{equation}\label{EnIneqInterp}
		\begin{split}
			E_{tot}(v^N(t),M^N(t),\phi^N(t))&+2\int_0^t\int_\Omega\nu(\phi^{N}_{h})|\mathbb{D} v^{N}|^{2}+\int_0^t\int_\Omega|\nabla\mu^N|^2\\
			&+\int_0^t\int_\Omega \left|\dvr(\xi(\phi^N_h)\nabla M^N)-\frac{\xi(\phi^N_h)}{\alpha^2}(|M^N|^2M^N-M^N_h)\right|^2\\
			&\leqslant E_{tot}(v_0,M^N_0,\phi^N_0)\leqslant C\left(E_{tot}(v_{0},M_{0},\phi_{0})+1\right)
		\end{split}
	\end{equation}
	first for each $t\in h\eN_0,$ where the inequality in the final line of \eqref{EnIneqInterp} follows from \eqref{InitMPhiApp}. As all interpolants involved in the latter inequality are constant on intervals of the form $[kh,(k+1)h),$ one concludes that \eqref{EnIneqInterp} holds for all $t\in[0,\infty)$. Therefore recalling the definition of $E_{tot}$ in \eqref{defEtot} and using \eqref{BoundsOnRho} we conclude from \eqref{EnIneqInterp} that in particular
	\begin{equation}\label{CInterpolantsBound}
		\begin{array}{ll}
			&\{v^N\}\text{ is bounded in } L^\infty(0,T+1;L^2(\Omega))\cap L^2(0,T+1;W^{1,2}(\Omega)),\\
			&\{v^N\}\text{ is bounded in }L^\frac{10}{3}(Q_{T+1})	,\\
			&\{M^N\}\text{ is bounded in } L^\infty(0,T+1;W^{1,2}(\Omega)),\\
			&\{\phi^N\}\text{ is bounded in } L^\infty(0,T+1;W^{1,2}(\Omega)),\\
			&\{\nabla \mu^N\}\text{ is bounded in } L^2(Q_{T+1}),\\
			&\{\dvr(\xi(\phi^N_h)\nabla M^N)-\frac{\xi(\phi^N_h)}{\alpha^2}(|M^N|^2M^N-M^N_h)\}\text{ is bounded in }L^2(0,T+1;L^2(\Omega)),\\[2.mm]
			&|\phi^{N}|\leqslant 1\,\,\text{a.e.\  in}\,\, Q_{T}.
		\end{array}
	\end{equation}
	The bound \eqref{CInterpolantsBound}$_{1}$ is obtained by using \eqref{nuAssum} and Korn inequality. All the sequences in \eqref{CInterpolantsBound}$_{1}$-\eqref{CInterpolantsBound}$_{6}$ in the respective norms are bounded by $C(E_{tot}(v_{0},M_{0},\phi_{0})+1)^{\frac{1}{2}}$ as a consequence of \eqref{EnIneqInterp}. Since $\phi_{k+1}\in\mathcal{D}(\partial\widetilde E)$ for all $k\in\mathbb{N}_{0}$, we have $|\phi_{k+1}|\leqslant 1$; one uses the fact that $|\phi^{N}_{0}|\leqslant 1$ and the definition of the interpolants \eqref{definterpolinitial}--\eqref{definterpol} to conclude \eqref{CInterpolantsBound}$_{7}.$ Let us note that \eqref{CInterpolantsBound}$_2$ is a consequence of bounds \eqref{CInterpolantsBound}$_{1},$ the embedding $W^{1,2}(\Omega)\hookrightarrow L^{6}(\Omega)$ and the following interpolation
	$$[L^{\infty}(0,T+1;L^{2}(\Omega)),L^{2}(0,T+1;L^{6}(\Omega))]_{\theta=\frac{3}{5}}=L^{\frac{10}{3}}(Q_{T+1}).$$
	The boundedness \eqref{CInterpolantsBound}$_3$ follows by \eqref{EnIneqInterp}, assumption \eqref{XiAssum} and the bound of $\left\{\frac{\xi(\phi^N_h)}{\alpha^2}(|M^N|^2-1)^2\right\}$ in $L^\infty(0,T+1;L^1(\Omega))$.
	Applying \eqref{boundsubdiff} we have
	\begin{equation*}
		\int_0^{T+1}\left|\int_\Omega\mu^{N}\right|\leqslant G(T+1)
	\end{equation*}
	for a monotone function $G:(0,\infty)\to(0,\infty)$. Combining the latter bound with \eqref{CInterpolantsBound}$_5$ we get
	\begin{equation}\label{MuCInterpBound}
		\{\mu^N\}\text{ is bounded in }L^2(0,T+1;W^{1,2}(\Omega)).
	\end{equation}
	Moreover, by the definition of a time shifted function we get
	\begin{equation}\label{TimeShiftBounds}
		\begin{alignedat}{2}
			\{v^N_h\}&\text{ is bounded in }&&L^\infty(0,T+1;L^2(\Omega)),\\
			\{M^N_h\}&\text{ is bounded in }&&L^\infty(0,T+1;W^{1,2}(\Omega)),\\
			\{\phi^N_h\}&\text{ is bounded in }&&L^\infty(0,T+1;W^{1,2}(\Omega)).
		\end{alignedat}
	\end{equation}
    All the sequences in \eqref{TimeShiftBounds} in the respective norms are bounded by $C(E_{tot}(v_{0},M_{0},\phi_{0})+1)^{\frac{1}{2}}.$ We conclude directly from the definition of the interpolants that
	\begin{equation}\label{PhiInterpBound}
		\{\phi^N\},\{\phi^N_h\},\{\widetilde\phi^N\}\subset [-1,1].
	\end{equation}
	Taking into account the definition of $\rho^N$ (as defined in \eqref{rhoN}) and $\rho^N_h$ it follows that
	\begin{equation}\label{RhoNBound}
		\{\rho^N\},\{\rho^N_h\}\text{ are bounded in }L^\infty(0,T+1;W^{1,2}(\Omega))\text{ and }L^\infty(Q_{T+1}).
	\end{equation}
	As a consequence of bounds \eqref{CInterpolantsBound}$_{1,2,3,4}$ and \eqref{MuCInterpBound} one has up to subsequences that are not explicitly relabeled
	\begin{equation}\label{InterpWeakCon}
		\begin{alignedat}{2}
			v^N&\rightharpoonup v&&\text{ in }L^2(0,T;W^{1,2}(\Omega)),\\
			v^N&\rightharpoonup^* v&&\text{ in }L^\infty(0,T;L^2(\Omega)),\\
			v^N&\rightharpoonup v&&\text{ in }L^\frac{10}{3}(Q_T),\\
			M^N&\rightharpoonup^* M&&\text{ in }L^\infty(0,T;W^{1,2}(\Omega)),\\
			\phi^N&\rightharpoonup^* \phi&&\text{ in }L^\infty(0,T;W^{1,2}(\Omega)),\\
			\mu^N&\rightharpoonup \mu&&\text{ in }L^2(0,T;W^{1,2}(\Omega)).
		\end{alignedat}
	\end{equation}
	Next we will collect some strong convergence results of the interpolants. Some of these results are already proved in \cite{KMS20}. First following the arguments from \cite[$(5.31)$, $(5.32)$ and $(5.33)$]{KMS20} we obtain
	\begin{equation}\label{PhiInterpStrongComp}
		\phi^N,\phi^N_h,\widetilde\phi^N\to \phi\text{ in }L^2(0,T;L^4(\Omega)).
	\end{equation}
	By \cite[$(5.39)$ and $(5.40)$]{KMS20} we have
	\begin{equation}\label{MInterpStrongComp}
		M^N\to M\text{ in }L^8(0,T;L^4(\Omega)),\ M^N_h\to M\text{ in }L^2(Q_T).
	\end{equation}
	Moreover, the convergence 
	\begin{equation}\label{SeqDvrConv}
		\dvr(\xi(\phi^N_h)\nabla M^N)-\frac{\xi(\phi^N_h)}{\alpha^2}(|M^N|^2M^N-M^N_h)\rightharpoonup \dvr(\xi(\phi)\nabla M)-\frac{\xi(\phi)}{\alpha^2}(|M|^2M-M)\text{ in }L^2(Q_T)
	\end{equation}
	follows by \cite[$(5.58)$]{KMS20}.
	Combining \eqref{PhiInterpBound} with \eqref{PhiInterpStrongComp} we get up to a nonrelabeled subsequence
	\begin{equation}\label{PhiInterpConv}
		\phi^N\to\phi\text{ in }L^p(Q_T) \text{ for all }p\in[1,\infty)\text{ and a.e.\  in }Q_T.
	\end{equation}
	To prove this claim one uses the strong convergence of $\phi^{N}$ from \eqref{PhiInterpStrongComp}, boundedness from \eqref{PhiInterpBound} and the following inequalities 
	\begin{equation*}
	\begin{array}{ll}
	\displaystyle\|\phi^{N}-\phi\|^{p}_{L^{p}(Q_{T})}&\displaystyle\leqslant 
	 C \int_{0}^{T}\|\phi^{N}-\phi\|^{(1-\frac{4}{p})p}_{L^{\infty}(\Omega)}\|\phi^{N}-\phi\|^{{4}}_{L^{4}(\Omega)}\\
	 &\displaystyle\leqslant C \int_{0}^{T}\|\phi^{N}-\phi\|^{(1-\frac{4}{p})p}_{L^{\infty}(\Omega)}\|\phi^{N}-\phi\|^{{2}}_{L^{\infty}(\Omega)}\|\phi^{N}-\phi\|^{{2}}_{L^{4}(\Omega)}\\
	 &\displaystyle \leqslant C\|\phi^{N}-\phi\|^{2+(1-\frac{4}{p})p}_{L^{\infty}(Q_{T})}\|\phi^{N}-\phi\|^{2}_{L^{2}(0,T;L^{4}(\Omega))}
	\end{array}
	\end{equation*}
	for $p\in[4,\infty)$ and  $\|\cdot\|_{L^{p}(\Omega)}\leqslant C\|\cdot\|_{L^{4}(\Omega)}$ for any $p\in[1,4).$
	 Taking into account the definition of $\rho^N$ (we refer to \eqref{rhoN}) and $\rho^N_h$ we obtain
	\begin{equation}\label{RhoInterpConv}
		\rho^N,\rho^N_h\to\rho\text{ in }L^p(Q_T) \text{ for all }p\in[1,\infty)\text{ and a.e.\  in }Q_T.
	\end{equation}
	We now focus on the proof of the compactness of interpolants for the velocity with respect to the topology of a suitable function space. This proof is not straightforward as no uniform bound is available on a sequence of time derivatives of piecewise affine  interpolants for the velocity. We investigate the convergence of $\{\widetilde{\rho v}^N\}$. From \eqref{MomEqInterp} it follows in particular that
	\begin{equation}\label{reformwk}
	\begin{split}
	\int_0^T& \langle \partial^-_{t,h}(\rho^Nv^N),\psi_1\rangle \\
	=&\int_0^T\int_\Omega\left( \rho^N_hv^N\otimes v^N+v^N\otimes J^N\right)\cdot\nabla\psi_1-2\nu(\phi^{N}_{h})\mathbb{D}v^{N}\cdot\mathbb{D}\psi_1\\
	&\qquad\quad+\left(\left(\frac{\xi(\phi^N_h)}{\alpha^2}(|M^N|^2M^N-M^N_h)-\dvr(\xi(\phi^N_h)\nabla M^N)\right)\nabla M^N-\nabla \mu^N\phi^N_h\right)\cdot\psi_1
	\end{split}
	\end{equation}
	for all $\psi_1\in L^8(0,\infty;V(\Omega))$. We note that $\left\{\rho^N_h v^N\otimes v^N\right\}$ is bounded in $L^2(0,T;L^{\frac{3}{2}}(\Omega))$ and $\{v^N\otimes J^N\}$ is bounded in $L^\frac{8}{7}(0,T;L^\frac{4}{3}(\Omega)).$ The explanation of achieving these bounds can be found in \cite[p.~474]{AbDeGa113}. Further one easily shows that $\{\nabla\mu^N\phi^N_h\}$ is bounded in $L^2(Q_{T})$ by using that $\{\phi^{N}_{h}\}$ is bounded in $L^{\infty}(Q_{T})$ and $\{\mu^{N}\}$ is bounded in $L^{2}(0,T;W^{1,2}(\Omega)).$ Using \eqref{nuAssum} and \eqref{CInterpolantsBound}$_{1},$  $\nu(\phi^{N}_{h})\mathbb{D}v^{N}$ is bounded in $L^{2}(Q_{T}).$ Moreover, by \eqref{CInterpolantsBound}$_{3,6}$ we have the bound on\\ $\left\{\left(\frac{\xi(\phi^N_h)}{\alpha^2}(|M^N|^2M^N-M^N_h)-\dvr(\xi(\phi^N_h)\nabla M^N)\right)\nabla M^N\right\}$ in $L^2(0,T;L^1(\Omega))$. Since $\partial_{t}\widetilde{\rho v}^{N}=\partial^{-}_{t,h}(\rho^{N}v^{N})$, one uses \eqref{reformwk} and the fact that the Leray projector $\mathbb{P}_{\dvr}$  commutes with the time derivative to infer that
	\begin{equation}\label{DiffQRVNBound}
	\left\{\tder\mathbb{P}_{\dvr}\left(\widetilde{\rho v}^N\right)\right\}\text{ is bounded in }L^\frac{8}{7}(0,T;(V(\Omega))').
	\end{equation}
	Taking into account the uniform bound on $\left\{\mathbb{P}_{\dvr}\left(\widetilde{\rho v}^N\right)\right\}$ in $L^\infty(0,T;L^2(\Omega))$, which follows from the continuity of $\mathbb{P}_{\dvr}$, \eqref{CInterpolantsBound}$_{1}$ and \eqref{RhoNBound}, the Aubin-Lions lemma gives the compactness of $\left\{\mathbb{P}_{\dvr}\left(\widetilde{\rho v}^N\right)\right\}$ with respect to the strong topology of $L^2(0,T;(W^{1,2}_{0,\dvr}(\Omega))')$. Moreover, in view of \eqref{InterpWeakCon}$_{1}$ and the almost everywhere convergence \eqref{RhoInterpConv} we can follow line by line the arguments presented in \cite[p.~90-91, $(3.95)$]{WeberThesis} to conclude that
	\begin{equation}\label{WTRhoVWeakly}
	\mathbb{P}_{\dvr}\left(\widetilde{\rho v}^N\right)\rightharpoonup \mathbb{P}_{\dvr}(\rho v)\text{ in }L^2(0,T;L^2_{\dvr}(\Omega)).
	\end{equation}
	Consequently, for a nonrelabeled subsequence we obtain
	\begin{equation}\label{WTRhoVStrongly}
	\mathbb{P}_{\dvr}\left(\widetilde{\rho v}^N\right)\to \mathbb{P}_{\dvr}(\rho v)\text{ in }L^2(0,T;(W^{1,2}_{0,\dvr}(\Omega))').
	\end{equation}
	Since
	\begin{equation*}
	\mathbb{P}_{\dvr}\left(\widetilde{\rho v}^N(t)\right)-\mathbb{P}_{\dvr}\left(\rho^N(t)v^N(t)\right)=(t-(k+1)h)\partial_{t}\mathbb{P}_{\dvr}\left(\widetilde{\rho v}^{N}\right)(t)\text{ for all } t\in[kh,(k+1)h), k\in\eN_0
	\end{equation*}
	and $|t-(k+1)h|\leqslant h$ we infer using  \eqref{DiffQRVNBound} that
	\begin{equation}\label{trvconv}
	\begin{array}{l}
	\mathbb{P}_{\dvr}\left(\widetilde{\rho v}^N\right)-\mathbb{P}_{\dvr}\left(\rho^Nv^N\right)\to 0\text{ in } L^\frac{8}{7}(0,T;(V(\Omega))').
	\end{array}
	\end{equation}
	Next we consider the following combination of interpolation and duality
	\begin{equation*}
	    (W^{1,2}_{0,\dvr}(\Omega))'=\left((L^2_{\dvr}(\Omega), V(\Omega))_{\frac{1}{2},2}\right)' =\left(L^2_{\dvr}(\Omega), (V(\Omega))'\right)_{\frac{1}{2},2},
	\end{equation*}
	where the first equality is a special case of \cite[(5.2.17)]{Abels2007} and the second one follows by \cite[Theorem 3.7.1]{BerLof76}. Employing the inequality that corresponds to the latter interpolation we obtain
	\begin{equation*}
	\begin{array}{ll}
	&\displaystyle \left\|\mathbb{P}_{\dvr}(\widetilde{\rho v}^{N})-\mathbb{P}_{\dvr}(\rho^{N}v^{N})\right\|_{L^{2}(0,T;(W^{1,2}_{0,\dvr}(\Omega))')}\\
	&\displaystyle\leqslant C\left\|\mathbb{P}_{\dvr}(\widetilde{\rho v}^{N})-\mathbb{P}_{\dvr}(\rho^{N}v^{N})\right\|_{L^{\infty}(0,T;L^{2}_{\dvr}(\Omega))}^{\frac{1}{2}}\left\|\mathbb{P}_{\dvr}(\widetilde{\rho v}^{N})-\mathbb{P}_{\dvr}(\rho^{N}v^{N})\right\|_{L^{1}(0,T;(V(\Omega))')}^{\frac{1}{2}}.
	\end{array}
	\end{equation*}
	Combining the latter inequality with the bound on $\left(\mathbb{P}_{\dvr}(\widetilde{\rho v}^{N})-\mathbb{P}_{\dvr}(\rho^{N}v^{N})\right)$ in $L^{\infty}(0,T;L^{2}(\Omega))$ 
	and the convergence \eqref{trvconv} we furnish that 
	\begin{equation}\label{trvconv*}
	\begin{array}{l}
	\mathbb{P}_{\dvr}(\widetilde{\rho v}^N)-\mathbb{P}_{\dvr}(\rho^Nv^N)\to 0\,\,\mbox{in}\,\, L^{2}(0,T;((W^{1,2}_{0,\dvr}(\Omega))').
	\end{array}
	\end{equation}
	The convergences \eqref{WTRhoVStrongly} and \eqref{trvconv*} together furnish that
	\begin{equation}\label{RhoVNStrongly}
	\mathbb{P}_{\dvr}(\rho^Nv^N)\to\mathbb{P}_{\dvr}(\rho v)\text{ in }L^2(0,T; (W^{1,2}_{0,\dvr}(\Omega))').
	\end{equation} 
	Next, by \eqref{RhoInterpConv} and \eqref{InterpWeakCon}$_3$ we conclude $(\rho^N)^\frac{1}{2}v^N\rightharpoonup \rho^\frac{1}{2}v$ in $L^2(Q_T)$. Furthermore, combining \eqref{RhoVNStrongly} and \eqref{InterpWeakCon}$_1$ it follows that 
	\begin{equation}\label{limpassagerv2}
	\begin{array}{ll}
	\displaystyle\int_0^T\int_\Omega \rho^N|v^N|^2&\displaystyle=\int_0^T\langle \mathbb{P}_{\dvr}(\rho^Nv^N),v^N\rangle_{(W^{1,2}_{0,\dvr}(\Omega))',W^{1,2}_{0,\dvr}(\Omega)}\\
	& \displaystyle\to\int_0^T\langle \mathbb{P}_{\dvr}(\rho v),v\rangle_{(W^{1,2}_{0,\dvr}(\Omega))',W^{1,2}_{0,\dvr}(\Omega)}
	\displaystyle=\int_0^T\int_\Omega \rho|v|^2
	\end{array}
	\end{equation}
	implying $\|(\rho^N)^\frac{1}{2}v^N\|_{L^2(Q_T)}\to \|\rho^\frac{1}{2}v\|_{L^2(Q_T)}$. Hence passing to a nonrelabeled subsequence one has
	\begin{equation}
	(\rho^N)^\frac{1}{2}v^N\to \rho^\frac{1}{2}v\text{ in }L^2(Q_T) \text{ and a.e.\  in }Q_T.
	\end{equation}
	Moreover, \eqref{RhoInterpConv} and the existence of a positive lower bound on $\{\rho^N\}$, obtained by a similar argument as in Remark~\ref{Rem:DensityBound}, imply $(\rho^N)^{-\frac{1}{2}}\to \rho^{-\frac{1}{2}}$ a.e.\  in $Q_T$. As \eqref{CInterpolantsBound}$_2$  ensures the equiintegrability of the sequence $\{|v^N|^q\}$, $q\in[1,\frac{10}{3})$ we conclude by the Vitali convergence theorem
	\begin{equation}\label{VNStrongly}
	v^N=(\rho^N)^{-\frac{1}{2}}(\rho^N)^\frac{1}{2}v^N\to v\text{ in }L^q(Q_T),\ q\in[1,\tfrac{10}{3}).
	\end{equation}
	In particular the strong convergence $v^{N}\rightarrow v$ in $L^{2}(Q_{T})$ (as a consequence of \eqref{VNStrongly}), the boundedness of $(v^{N}-v)$ in $L^{2}(0,T;L^{6}(\Omega))$ and the following interpolation inequality
	$$\displaystyle\|v^{N}-v\|_{L^{2}(0,T;L^{4}(\Omega))}\leqslant C\|v^{N}-v\|_{L^{2}(0,T;L^{6}(\Omega))}^{\frac{3}{4}}\|v^{N}-v\|_{L^{2}(Q_{T})}^{\frac{1}{4}}$$
	provides
	\begin{equation}\label{strngconvL24}
	\begin{array}{l}
	v^N\to v\,\,\text{ in }\,\,L^{2}(0,T;L^{4}(\Omega)).
	\end{array}
	\end{equation}
	The last important convergence is 
	\begin{equation}\label{MNStrongly}
	M^N\to M\text{ in }L^2(0,T;W^{1,2}(\Omega)).
	\end{equation}
	The convergence \eqref{MNStrongly} is crucial in order to pass to the limit in the term containing $|\nabla M^{N}|^{2}$ in \eqref{EqMuInterp} and the term comprising of $\nabla M^{N}$ in the momentum equation \eqref{MomEqInterp}. The proof of \eqref{MNStrongly} relies on the monotone structure of $\dvr(\xi(\phi^{N}_{h})\nabla M^{N})$ and can be done by following the arguments used to show \cite[$(5.41),$ Section 5.1.2]{KMS20}. In order to do so, the strong convergence of $M^{N}$ to $M$ in $L^{8}(0,T;L^{4}(\Omega))$ and $v^{N}$ to $v$ in $L^{2}(0,T;L^{4}(\Omega))$ are used. Since these convergences are available in the present scenario (we refer to \eqref{MInterpStrongComp} and \eqref{strngconvL24}), we face no particular difficulty to follow line by line the proof of \cite[$(5.41),$ Section 5.1.2]{KMS20}.
	\subsubsection{Some uniform estimates on $M^{N}$ and $\phi^{N}$}\label{phiNMNuest}
	In this section we will obtain further uniform estimates that involve the integrability of $\nabla M^N$ w.r.t. spatial variables for an exponent greater than $2$, the integrability of the second gradient of $\phi^{N}$ and of $\widetilde{\Psi}'_{0}(\phi^{N})$ w.r.t. spatial variables for an exponent greater than $1$ depending only on the energy estimate \eqref{EnIneqInterp} for the interpolants. These improved estimates will aid in recovering the weak formulation of Cahn-Hilliard equations. In that direction we will make use of an abstract elliptic regularity result from \cite{konrad}. The central result of this section is Lemma~\ref{UregMphi} which will be proved by using the following result.
	\begin{lem}\label{Lem:RegImpr}
		Let $\Omega$ be a bounded domain of class $C^{1}$ in $\mathbb{R}^{d},$ $d\geqslant 2$. Let $\widetilde{\xi}:\Omega\rightarrow \mathbb{R}^{+}$ be a bounded, measurable function satisfying
		\begin{equation}\label{nondegeneracy}
				0<c_1\leq\widetilde{\xi}(\cdot)\leqslant c_2\text{ on }\Omega, \text{ for some}\,\, c_{1},c_{2}>0.
		\end{equation}
		Then there is $p>2$ such that a solution $M=(M_{1},M_{2},M_{3})\in W^{1,2}(\Omega)$ of the following elliptic problem with homogeneous Neumann boundary condition
		\begin{equation}\label{Neumannprob}
			\begin{alignedat}{2}
				\dvr ({\widetilde{\xi}\nabla M})=&g&&\text{ in }\Omega,\\
				\partial_{n}M=&0&&\text{ on }\partial\Omega,
			\end{alignedat}
		\end{equation}
		where $g\in L^s(\Omega)$ with $s\geq \frac{2d}{d+2}$ satisfies
		\begin{equation}\label{W1pest}
			\begin{array}{l}
				\|M\|_{W^{1,p}(\Omega)}\leqslant C\left(\|g\|_{L^s(\Omega)}+\|M\|_{W^{1,2}(\Omega)}\right).
			\end{array}
		\end{equation}
		The constant $C$ depends on $d,$ $c_{1}$  and $c_{2}$ and the domain $\Omega.$
	\end{lem}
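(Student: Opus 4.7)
The plan is to reduce the vector-valued Neumann problem to three scalar Neumann problems (one for each component $M_i$, $i=1,2,3$) and then to apply an abstract $W^{1,p}$-regularity result for linear scalar elliptic equations with bounded measurable coefficients and pure Neumann boundary data, which is precisely the content of the cited paper \cite{konrad}. Since the coefficient matrix here is $\widetilde{\xi}(x)\mathbb{I}$ with $\mathbb{I}$ the $3\times 3$ identity, the system decouples: each component satisfies independently
\begin{equation*}
\dvr(\widetilde{\xi}\nabla M_i) = g_i \text{ in }\Omega, \qquad \partial_n M_i=0\text{ on }\partial\Omega
\end{equation*}
in the weak sense $\int_\Omega \widetilde{\xi}\nabla M_i\cdot\nabla \psi = -\int_\Omega g_i\psi$ for all $\psi\in W^{1,2}(\Omega)$.

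First I would check that $g_i$ defines a continuous functional on $W^{1,2}(\Omega)$: since $s\geq \frac{2d}{d+2}$ and $W^{1,2}(\Omega)\hookrightarrow L^{2d/(d-2)}(\Omega)$ for $d\geq 3$ (and into any $L^q$, $q<\infty$, for $d=2$), duality gives $L^s(\Omega)\hookrightarrow (W^{1,2}(\Omega))'$. Hence Lax--Milgram together with the uniform ellipticity \eqref{nondegeneracy} places us in the abstract framework of \cite{konrad}. Next I would invoke Gr\"oger's theorem from that reference: under \eqref{nondegeneracy} and $C^1$-regularity of $\partial\Omega$, there exists an exponent $p=p(d,c_1,c_2,\Omega)>2$, lying sufficiently close to $2$, such that every weak solution belongs to $W^{1,p}(\Omega)$ and satisfies
\begin{equation*}
\|M_i\|_{W^{1,p}(\Omega)}\leq C\bigl(\|g_i\|_{(W^{1,p'}(\Omega))'}+\|M_i\|_{W^{1,2}(\Omega)}\bigr).
\end{equation*}
By choosing $p$ close enough to $2$, the conjugate $p'$ stays close to $2$ as well, so its Sobolev exponent $(p')^{*}=dp'/(d-p')$ is close to $2^*=2d/(d-2)$; in particular $((p')^{*})'$ is close to $\frac{2d}{d+2}$ and the assumption $s\geq \frac{2d}{d+2}$ therefore forces $L^s(\Omega)\hookrightarrow (W^{1,p'}(\Omega))'$, so the dual-norm of $g_i$ is dominated by $C\|g_i\|_{L^s(\Omega)}$. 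Summing over $i=1,2,3$ then yields the asserted bound \eqref{W1pest}.

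The real work is already buried inside the cited theorem: it is a self-improvement of integrability of $\nabla M$ beyond the natural exponent $2$, proved via interior and boundary Caccioppoli inequalities combined with Gehring's reverse-H\"older lemma, where the $C^1$ assumption on $\partial\Omega$ is used to flatten the boundary. A technical subtlety worth keeping in mind is that the exponent $p$ and the constant $C$ degenerate as the ellipticity ratio $c_2/c_1$ grows, with $p\searrow 2$ in that limit; in the application of Lemma~\ref{Lem:RegImpr} to $\widetilde\xi=\xi(\phi^N_h)$ the bounds in \eqref{XiAssum} are uniform in $N$, which is precisely what guarantees that a single $p>2$, independent of $N$, can be extracted for the whole sequence of approximations.
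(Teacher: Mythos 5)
Your proposal follows essentially the same route as the paper: decouple the system into three scalar Neumann problems, invoke Gr\"oger's $W^{1,p}$-result from \cite{konrad} for uniformly elliptic operators with merely bounded, measurable coefficients, control the right-hand side by duality and Sobolev embedding, and observe that $p$ and $C$ depend only on $d$, $c_1$, $c_2$ and $\Omega$, hence are uniform along the sequence $\widetilde\xi=\xi(\phi^N_h)$. One presentational difference: Gr\"oger's Theorem~1 is an isomorphism statement, and the pure Neumann operator $\dvr(\widetilde\xi\nabla\cdot)$ is not invertible (constants lie in its kernel); the paper therefore first rewrites the equation as in \eqref{Neumannprob*}, adding the zeroth-order term $M_k$ on both sides, and applies the theorem to the resulting coercive operator $A$ -- this is precisely where the $\|M\|_{W^{1,2}(\Omega)}$ term on the right of \eqref{W1pest} comes from. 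You quote that estimate directly without saying how it follows from the isomorphism statement; this is a one-line reduction, so not a substantive gap, but it should be made explicit.

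There is, however, a genuine flaw in your duality bookkeeping at the borderline exponent. For $p>2$ one has $(p')^{*}<2^{*}$, hence $\bigl((p')^{*}\bigr)'>\frac{2d}{d+2}$ strictly, so the embedding $L^{s}(\Omega)\hookrightarrow \bigl(W^{1,p'}(\Omega)\bigr)'$ requires $s\geq \bigl((p')^{*}\bigr)'$; the hypothesis $s\geq\frac{2d}{d+2}$ does \emph{not} force this when $s=\frac{2d}{d+2}$ (take $d=3$, $s=\frac{6}{5}$: then $s'=6$, while $W^{1,p'}(\Omega)$ only embeds into $L^{(p')^{*}}(\Omega)$ with $(p')^{*}<6$, whatever $p>2$ is chosen). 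Thus, as written, your argument proves the lemma only for $s>\frac{2d}{d+2}$, with the admissible $p$ additionally constrained by $s$; this suffices for the application in Section~\ref{phiNMNuest}, where $s=\frac{3}{2}$, but it does not cover the statement as formulated. The paper proceeds differently at this step: the right-hand side of \eqref{Neumannprob*}$_1$ is estimated in $(W^{1,2}(\Omega))'$, which embeds into $(W^{1,p}(\Omega))'$ for every $p>2$, and the inverse estimate \eqref{AIneq} is phrased in terms of the $(W^{1,p}(\Omega))'$-norm, so no smallness of $p$ relative to $s$ enters there.
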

	\begin{proof}The result stated in Lemma~\ref{Lem:RegImpr} is a special case of the more general result in \cite[Remark 13]{konrad}. For the convenience of the readers we present the proof. First, we rewrite \eqref{Neumannprob} component wise in the form
		\begin{equation}\label{Neumannprob*}
			\begin{alignedat}{2}
				-\sum\limits_{i=1}^{d}\partial_{i}({\widetilde{\xi}}\partial_{i}M_{k})+M_{k}=&-g_{k}+M_{k}&&\text{ in }\Omega,\\
				\partial_{n}\,M_k=&0&&\text{ on }\partial\Omega,
			\end{alignedat}
		\end{equation}
		for $k\in\{1,2,3\}$. The operator $A:W^{1,2}(\Omega)\to(W^{1,2}(\Omega))^{'}$ appearing in the weak form of the latter problem is defined as 
		\begin{equation*}
			\langle Au,w\rangle=\int_\Omega a_{ij}\partial_i u\partial_j w+uw\text{ for }u,w\in W^{1,2}(\Omega)
		\end{equation*}
		with the matrix $(a_{ij})_{i,j\in\{1,..,d\}}$ given by
		\begin{equation}\nonumber
			a_{ij}(x)=\left\{ \begin{array}{ll}
				\widetilde{\xi}(x)&\quad\mbox{when}\quad i=j,\\
				0&\quad\mbox{when}\quad i\neq j.
			\end{array}\right.
		\end{equation}
		Taking into account \eqref{nondegeneracy} we infer $a_{ij}(\cdot)\in L^{\infty}(\Omega)$ and the ellipticity condition 
		$$\sum_{i,j=1}^{d}a_{ij}(x)\theta_{i}\theta_{j}\geqslant c_{1}|\theta|^{2}\text{ for any }\theta=(\theta_{1},\ldots,\theta_{d})\in\mathbb{R}^d.$$
		In order to apply \cite[Theorem 1]{konrad}, we note that as $\Omega$ is of class $C^1$, assumptions of \cite[Theorem 1]{konrad} are fulfilled, cf.\ \cite[Remark 1 and 7]{konrad}. Hence we conclude the existence of some $p>2$ such that the operator $A$ maps $W^{1,p}(\Omega)$ onto $(W^{1,p}(\Omega))^{'}$. The upper bound on $p$ can be found in \cite[Theorem 1]{konrad}, from which one also infers the inequality
		\begin{equation}\label{AIneq}
			\|A^{-1}f\|_{W^{1,p}(\Omega)}\leqslant c\|f\|_{(W^{1,p}(\Omega))^{'}}
		\end{equation}
		due to the linearity of $A^{-1}$. We notice that the constant $c$ depends on $\Omega$ and constants from \eqref{nondegeneracy}. In order to conclude \eqref{W1pest}, we employ \eqref{AIneq} with $f$ being the right hand side of \eqref{Neumannprob*}$_1$ and use the embedding  $L^s(\Omega)\hookrightarrow(W^{1,2}(\Omega))'$ for $s\geqslant \frac{2d}{(d+2)}$ and $(W^{1,2}(\Omega))^{'}$ to $(W^{1,p}(\Omega))^{'}$ for $p>2$. 
	\end{proof}
	\begin{lem}\label{UregMphi}
		Let $(v^{N},M^{N},\phi^{N},\mu^{N})$ be the interpolants defined by \eqref{definterpolinitial}--\eqref{definterpol}, satisfying \eqref{MomEqInterp}--\eqref{EqMuInterp} and the energy estimate \eqref{EnIneqInterp}. Then $M^{N}$ satisfies
		\begin{equation}\label{boundM}
			\begin{array}{l}
				\|M^{N}\|_{L^{2}(0,T;W^{1,p}(\Omega))}\leqslant CE^{+}_{tot}(v_0,M_0,\phi_0)^{\frac{3}{2}},
			\end{array}
		\end{equation}
		where $E^{+}_{tot}(v_0,M_0,\phi_0)=\left(E_{tot}(v_0,M_0,\phi_0)+1\right),$ for some $p>2$ and the positive constant $C$ might depend on $\alpha$, $c_{1},$ $c_{2},$ $c_{3}$ , cf.\ \eqref{XiAssum}, Sobolev embedding constants and the domain $\Omega.$\\
		Further $\phi^N$ and $\widetilde{\Psi}'_{0}(\phi^N)$ satisfy
		\begin{equation}\label{boundphiPsi}
				\|\phi^{N}\|_{L^{2}(0,T;W^{2,q}(\Omega))}+\|\widetilde{\Psi}'_{0}(\phi^{N})\|_{L^2(0,T;L^q(\Omega))}\leqslant CE^{+}_{tot}(v_0,M_0,\phi_0)^3.
		\end{equation}
			where $1<q=\frac{2p}{p+2}<2$ and the positive constant $C$ in \eqref{boundphiPsi} depends on $\alpha$, $c_{1},$ $c_{2},$ $c_{3},$ Sobolev embedding constants and the domain $\Omega.$
	\end{lem}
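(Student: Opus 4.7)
The plan is to prove the two estimates sequentially. First I will establish the $L^2(0,T;W^{1,p})$ bound on $M^N$ by applying the elliptic regularity result of Lemma~\ref{Lem:RegImpr} pointwise in time. Then I will feed this improved regularity of $\nabla M^N$ into the identity \eqref{EqMuInterp} and invoke the Cahn-Hilliard regularity result of Proposition~\ref{ellipticreg} to obtain the bounds on $\phi^N$ and $\widetilde{\Psi}'_{0}(\phi^N)$.

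For the bound on $M^N$ I would first isolate $\dvr(\xi(\phi^N_h)\nabla M^N)$ from \eqref{CInterpolantsBound}$_6$ by writing
\begin{equation*}
\dvr(\xi(\phi^N_h)\nabla M^N) = R^N + \frac{\xi(\phi^N_h)}{\alpha^2}\bigl(|M^N|^2M^N - M^N_h\bigr),
\end{equation*}
where $\|R^N\|_{L^2(Q_{T+1})}\leqslant C E^+_{tot}(v_0,M_0,\phi_0)^{1/2}$. By the embedding $W^{1,2}(\Omega)\hookrightarrow L^6(\Omega)$, \eqref{CInterpolantsBound}$_3$ and \eqref{XiAssum}, the cubic term is bounded in $L^\infty(0,T;L^2(\Omega))\hookrightarrow L^2(0,T;L^{3/2}(\Omega))$ by a polynomial in $E^+_{tot}(v_0,M_0,\phi_0)$. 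Altogether $\dvr(\xi(\phi^N_h)\nabla M^N)\in L^2(0,T;L^{3/2}(\Omega))$ with an analogous bound. For $d\in\{2,3\}$ we have $3/2\geqslant \tfrac{2d}{d+2}$, and $\xi(\phi^N_h(t))$ satisfies \eqref{nondegeneracy} uniformly in $N$ and $t$ by \eqref{XiAssum}. Applying Lemma~\ref{Lem:RegImpr} pointwise to the boundary value problem solved by $M^N(t)$ yields, for some $p>2$ depending only on $c_1,c_2,\Omega,d$,
\begin{equation*}
\|M^N(t)\|_{W^{1,p}(\Omega)}\leqslant C\bigl(\|\dvr(\xi(\phi^N_h)\nabla M^N)(t)\|_{L^{3/2}(\Omega)}+\|M^N(t)\|_{W^{1,2}(\Omega)}\bigr).
\end{equation*}
Squaring, integrating in $t$ and absorbing $\|M^N\|_{L^2(0,T;W^{1,2}(\Omega))}$ via \eqref{CInterpolantsBound}$_3$ then gives \eqref{boundM}.

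For the second bound I would read \eqref{EqMuInterp} pointwise as
\begin{equation*}
\partial\widetilde{E}(\phi^N) = \mu^N + \kappa\frac{\phi^N+\phi^N_h}{2} - H_0(\phi^N,\phi^N_h)\frac{|\nabla M^N|^2}{2} - \frac{H_0(\phi^N,\phi^N_h)}{4\alpha^2}(|M^N|^2-1)^2
\end{equation*}
where $\partial\widetilde{E}(\phi^N)=-\eta\Delta\phi^N+\widetilde{\Psi}'_0(\phi^N)$ in view of Proposition~\ref{ellipticreg}. Setting $q=\tfrac{2p}{p+2}\in(1,2)$ and using the interpolation
\begin{equation*}
\|\nabla M^N\|_{L^4(0,T;L^{2q}(\Omega))}\leqslant C\|\nabla M^N\|_{L^\infty(0,T;L^2(\Omega))}^{1/2}\|\nabla M^N\|_{L^2(0,T;L^p(\Omega))}^{1/2}
\end{equation*}
together with \eqref{boundM} and \eqref{CInterpolantsBound}$_3$, I conclude that $|\nabla M^N|^2$ is bounded in $L^2(0,T;L^q(\Omega))$ by a polynomial in $E^+_{tot}(v_0,M_0,\phi_0)$. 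The terms $\mu^N$, $\phi^N$, $\phi^N_h$ and $(|M^N|^2-1)^2$ are controlled in $L^2(0,T;L^q(\Omega))$ via \eqref{MuCInterpBound}, \eqref{PhiInterpBound} and the embedding $W^{1,2}(\Omega)\hookrightarrow L^6(\Omega)$, while $|H_0(\phi^N,\phi^N_h)|\leqslant c_3$ by \eqref{XiAssum} and the mean value theorem. Hence $\partial\widetilde{E}(\phi^N)\in L^2(0,T;L^q(\Omega))$ with an explicit bound in $E^+_{tot}(v_0,M_0,\phi_0)$. Applying \eqref{regLp} with exponent $q$ pointwise in time, squaring and integrating over $(0,T)$, and absorbing $\|\phi^N\|_{L^2(0,T;L^2(\Omega))}\leqslant C$ via \eqref{PhiInterpBound} then yields \eqref{boundphiPsi}.

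The delicate point is the self-referential coupling of the two estimates: the $W^{1,p}$ regularity of $M^N$ with $p>2$ is precisely what is needed so that $|\nabla M^N|^2$ lives in $L^q$ in space with $q>1$, which in turn is what allows Proposition~\ref{ellipticreg} to yield an $L^q$ bound on $\widetilde\Psi'_0(\phi^N)$. The crucial structural observation making the whole scheme work is that the constant $p>2$ furnished by \cite{konrad} through Lemma~\ref{Lem:RegImpr} depends only on the ellipticity bounds $c_1,c_2$ and on $\Omega,d$, and not on any regularity of $\phi^N_h$ beyond measurability. Since a uniform $W^{1,2}\cap L^\infty$ bound on $\phi^N_h$ is all that the energy estimate provides, no Schauder- or Calderón–Zygmund-type argument would be applicable here, and it is this uniform-coefficient elliptic regularity that circumvents the obstacle.
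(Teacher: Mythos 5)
Your proof is correct and follows essentially the same route as the paper: Gr\"oger-type $W^{1,p}$ regularity (Lemma~\ref{Lem:RegImpr}) applied to the discrete magnetization equation viewed as a Neumann problem with merely bounded, measurable, nondegenerate coefficient $\xi(\phi^N_h)$, followed by feeding $|\nabla M^N|^2\in L^2(0,T;L^q)$ into the Cahn--Hilliard relation and invoking \eqref{regLp} of Proposition~\ref{ellipticreg}. The only cosmetic differences are that the paper estimates $\partial^{-}_{t,h}M^N$ (by duality) and $(v^N\cdot\nabla)M^N$ (by H\"older and Sobolev embedding) separately, whereas you bundle them into the dissipation term controlled by \eqref{CInterpolantsBound}$_6$, and that your control of $(|M^N|^2-1)^2$ in $L^2(0,T;L^q)$ through $L^\infty(0,T;L^{3/2}(\Omega))$ requires $q\leqslant\tfrac{3}{2}$, i.e.\ $p\leqslant 6$, which can be assumed without loss of generality since the $W^{1,p}$ bound persists upon decreasing $p$ (the paper instead uses H\"older with the $L^2(0,T;L^p(\Omega))$ bound on $M^N$).
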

	\begin{proof}
		In order to prove \eqref{boundM} we consider \eqref{intidentity2} for a.e.\  $t\in(0,T)$ as the elliptic problem
		\begin{equation}\label{bounddivL2}
			\begin{alignedat}{2}
				\dvr(\xi(\phi^N_h)\nabla M^N)=&\partial^{-}_{t,h}M^N+(v^N\cdot\nabla)M^N+\frac{\xi(\phi^N_h)}{\alpha^2}(|M^N|^2M^N-M^N_h)&&\text{ in }\Omega,\\
				\partial_{n}M^{N}=&0&&\text{ on }\partial\Omega,
			\end{alignedat}
		\end{equation}
		which is possible since, in view of \eqref{CInterpolantsBound}, all the terms involved in \eqref{bounddivL2} are defined a.e. We focus on the estimate of the right hand side in \eqref{bounddivL2}$_1$. Using the H\"older inequality, the Sobolev embedding and \eqref{CInterpolantsBound} we get
		\begin{equation}\label{MConvTermEst}
			\begin{split}
				\|(v^N\cdot\nabla) M^N\|_{L^2(0,T;L^\frac{3}{2}(\Omega))}&\leqslant \|v^N\|_{L^2(0,T;L^6(\Omega))}\|\nabla M^N\|_{L^\infty(0,T;L^2(\Omega))}\\
				&\leqslant C\| v^N\|_{L^2(0,T;W^{1,2}(\Omega))}\|\nabla M^N\|_{L^\infty(0,T;L^2(\Omega))}
				\leqslant C E^{+}_{tot}(v_0,M_0,\phi_0).
			\end{split}
		\end{equation}
		Since $L^{2}(0,T;W^{1,2}(\Omega))$ is dense in $L^{2}(0,T;L^{3}(\Omega)),$ one can choose test functions $\psi_{2}\in L^{2}(0,T;L^{3}(\Omega))$ in \eqref{intidentity2} and use \eqref{CInterpolantsBound}$_{1,6}$ to compute the following
		\begin{equation}\label{TDerMEst}
			\begin{array}{ll}
				\displaystyle\|\partial^{-}_{t,h}M^N\|_{L^2(0,T;L^\frac{3}{2}(\Omega))}
				&\displaystyle=\sup_{\{\psi_{2}\in L^{2}(0,T;L^{3}(\Omega))\suchthat \|\psi_{2}\|_{L^{2}(0,T;L^{3}(\Omega))}\leq 1\}}\int_{0}^{T}\int_{\Omega}\partial^{-}_{t,h}M^N\psi_{2}\\[4.mm]
				&\displaystyle\leq C\bigg(\| v^N\|_{L^2(0,T;W^{1,2}(\Omega))}\|\nabla M^N\|_{L^\infty(0,T;L^2(\Omega))}\\
				&\qquad +\left\|\dvr(\xi(\phi^N_h)\nabla M^N)-\frac{\xi(\phi^N_h)}{\alpha^2}(|M^N|^2M^N-M^N_h)\right\|_{L^{2}(Q_{T})}\bigg)\\[3.mm]
				&\displaystyle\leq C\left(E^{+}_{tot}(v_0,M_0,\phi_0)+E^{+}_{tot}(v_0,M_0,\phi_0)^\frac{1}{2}\right).
			\end{array}
		\end{equation}
		Moreover, using bounds on $\xi$ in \eqref{XiAssum}, the H\"older inequality, the Sobolev embedding and the definition of $M^N_h$ we arrive at
		\begin{equation}\label{RestEst}
			\begin{split}
				\left\|\frac{\xi(\phi^N_h)}{\alpha^2}(|M^N|^2M^N-M^N_h)\right\|_{L^2(0,T;L^\frac{3}{2}(\Omega))}\leq& C\left(\|M^N\|^3_{L^6(0,T;L^\frac{9}{2}(\Omega))}+\|M^N_h\|_{L^2(0,T;L^\frac{3}{2}(\Omega))}\right)\\
				\leq& C\left(\|M^N\|^3_{L^\infty(0,T;W^{1,2}(\Omega))}+\|M^N_h\|_{L^\infty(0,T;W^{1,2}(\Omega))}\right)\\
				\leq& C\left(E^{+}_{tot}(v_0,M_0,\phi_0)^\frac{3}{2}+E^{+}_{tot}(v_0,M_0,\phi_0)^\frac{1}{2}\right).
			\end{split}
		\end{equation}
		Applying Lemma~\ref{Lem:RegImpr} with $s=\frac{3}{2}$ to \eqref{bounddivL2} we obtain
		\begin{equation*}
		\begin{split}
			\|M^N\|_{W^{1,p}(\Omega)}\leqslant& C\Biggl(\|\partial^{-}_{t,h}M^N\|_{L^\frac{3}{2}(\Omega)}+\|(v^N\cdot\nabla) M^N\|_{L^\frac{3}{2}(\Omega)}+\left\|\frac{\xi(\phi^N_h)}{\alpha^2}(|M^N|^2M^N-M^N_h)\right\|_{L^\frac{3}{2}(\Omega)}\\
			&+\|M^N\|_{W^{1,2}(\Omega)}\Biggr)
			\end{split}
		\end{equation*}
		a.e.\  in $(0,T)$ with $p>2$ and the constant $C$ independent of the time variable. We combine the latter inequality with \eqref{MConvTermEst}, \eqref{TDerMEst}, \eqref{RestEst} and the Young inequality to conclude \eqref{boundM}.
		
		To show \eqref{boundphiPsi}, we will use \eqref{EqMuInterp}. Using \eqref{CInterpolantsBound}$_{4},$ \eqref{MuCInterpBound} and \eqref{TimeShiftBounds}$_{3}$ one can bound the first two terms appearing in the left hand side of \eqref{EqMuInterp} in $L^{2}(0,T;L^{6}(\Omega))$ by a constant multiple of $E_{tot}(v_0,M^N_0,\phi^N_0)^{\frac{1}{2}}.$ Further since $\|H_{0}(\phi^N,\phi^N_h)\|_{L^{\infty}(Q_{T})}\leqslant c_{3}$ and by \eqref{CInterpolantsBound}$_3$ along with \eqref{boundM}, one has
		\begin{equation}\label{someLpbndgM}
			\begin{split}
				\tfrac{1}{2}&\left\|H_{0}(\phi^N,\phi^N_h)|\nabla M^N|^{2}\right\|_{L^{2}(0,T;L^{\frac{2p}{p+2}}(\Omega))}\\
				&\leqslant C\|\nabla M^{N}\|_{L^{2}(0,T;L^{p}(\Omega))}\|\nabla M^{N}\|_{L^{\infty}(0,T;L^{2}(\Omega))}\leqslant CE^{+}_{tot}(v_0,M_0,\phi_0),
			\end{split}
		\end{equation}
		and
		\begin{equation}\label{someLPbndgM*}
			\begin{split}
				&\tfrac{1}{4\alpha^2}\left\|H_{0}(\phi^N,\phi^N_h)(|M^N|^{2}-1)^{2}\right\|_{L^{2}(0,T;L^{\frac{2p}{p+2}}(\Omega))} \leqslant C\left\|M^{N}\|^{3}_{L^{\infty}(0,T;L^{6}(\Omega))}\|M^{N}\|_{L^{2}(0,T;L^{p}(\Omega))}+1\right)\\
				\vspace{1em}
				&\leqslant C\left( E^{+}_{tot}(v_0,M_0,\phi_0)^{\frac{3}{2}}E^{+}_{tot}(v_0,M_0,\phi_0)^{\frac{3}{2}}+1\right)\leqslant CE^{+}_{tot}(v_0,M_0,\phi_0)^{3}.
			\end{split}
		\end{equation}
		In both of \eqref{someLpbndgM} and \eqref{someLPbndgM*} the positive constant $C$ might depend on $\alpha$, $c_{1},$ $c_{2},$ $c_{3},$ Sobolev embedding constants and $|\Omega|.$ 
		From the discussion above (in particular the inequalities \eqref{someLpbndgM} and \eqref{someLPbndgM*}) one infers from \eqref{EqMuInterp}
		\begin{equation}\label{eqphiN}
			\begin{alignedat}{2}
				-\eta\Delta\phi^N+\widetilde{\Psi}'_{0}(\phi^N)=&f(M^{N},\phi^{N},\phi^{N}_{h},\mu^{N})&&\mbox{ in }\Omega,\\
				\partial_{n}\phi^{N}=&0&&\mbox{ on }\partial\Omega
			\end{alignedat}
		\end{equation} 
		a.e.\  in $(0,T)$ with 
		\begin{equation*}
		\left\|f(M^{N},\phi^{N},\phi^{N}_{h},\mu^{N})\right\|_{L^{2}(0,T;L^{\frac{2p}{p+2}}(\Omega))}\leqslant CE^{+}_{tot}(v_0,M_0,\phi_0)^3,
		\end{equation*}
		where $C$ might depend on $\alpha$, $c_{1},$ $c_{2},$ $c_{3},$ Sobolev embedding constants and the domain $\Omega.$\\
		Finally, applying the inequality \eqref{regLp} from Proposition~\ref{ellipticreg} to  \eqref{eqphiN} and the Young inequality again we obtain \eqref{boundphiPsi}.
	\end{proof}
	\subsubsection{Additional convergences of $\{M^N\}$, $\{\phi^N\}$, $\{\Psi'(\phi^N)\}$}\label{Additionalconv}
    In view of estimate \eqref{boundM}, we immediately obtain that for some $p>2$ we have up to a nonrelabeled subsequence 
    \begin{equation*}
        M^N\rightharpoonup M\text{ in }L^2(0,T;W^{1,p}(\Omega)), 
    \end{equation*}
    where $M$ comes from \eqref{InterpWeakCon}. This concludes \eqref{AddReg}$_1$. Similarly, by \eqref{boundphiPsi} we have up to a nonrelabeled subsequence 
    \begin{equation}\label{PhiNW2Cnv}
        \phi^N\rightharpoonup \phi\text{ in }L^2(0,T;W^{2,\frac{2p}{p+2}}(\Omega)),
    \end{equation}
    proving \eqref{AddReg}$_2$. The next task is to show that up to a nonrelabeled subsequence
    \begin{equation}\label{PsiPhiNCnv}
        \widetilde{\Psi}'_0(\phi^N)\rightharpoonup \widetilde{\Psi}'_0(\phi)\text{ in }L^2(0,T;L^{\frac{2p}{p+2}}(\Omega)),
    \end{equation}  
    from which \eqref{AddReg}$_3$ follows. We observe that the estimate of $\widetilde{\Psi}'_0(\phi^N)$ in \eqref{boundphiPsi} implies that
    \begin{equation*}
        \widetilde{\Psi}'_0(\phi^N)\rightharpoonup \zeta \text{ in }L^2(0,T;L^{\frac{2p}{p+2}}(\Omega)).
    \end{equation*}   
    Hence we have to identify $\zeta$. Let us begin with showing that 
		\begin{equation}\label{PointConvComp}
		\widetilde\Psi'_0(\phi^N)\to\widetilde\Psi'_0(\phi)\text{ a.e.\  in }Q_T.
		\end{equation}
		To this end we adopt arguments devised in the context of the Cahn--Hillard equations with a logarithmic free energy, see \cite[p.~1510]{DebDet95}, and developed  for the case of the Navier--Stokes--Cahn--Hilliard system with a singular potential, see \cite[p.~285]{FriGra12}.
		We define for arbitrary but fixed $\delta\in(0,1)$ the quantity $a_\delta=\min\left\{\widetilde\Psi'_0(1-\delta),-\widetilde\Psi'_0(-1+\delta)\right\}$. Then we have
		$a_\delta\leqslant |\widetilde\Psi'_0(s)|$ for $1>|s|>1-\delta$ as $\widetilde\Psi'_0$ is nondecreasing. Hence we obtain
		\begin{equation*}
		a_\delta\left|\left\{(t,x)\in Q_T: 1>|\phi^N(t,x)|>1-\delta\right\}\right|\leq\int_{Q_T}|\widetilde\Psi'_0(\phi^N)|\leqslant c
		\end{equation*}
		by \eqref{boundphiPsi} and H\"older's inequality.
		Combining the pointwise convergence $\phi^N\to \phi$ from \eqref{PhiInterpConv} and the Fatou Lemma with the latter inequality we conclude
		\begin{equation}\label{DeltaIneq}
		|\{(t,x)\in Q_T:1\geq |\phi(t,x)|\geq 1-\delta\}|\leqslant \liminf_{N\to\infty}\left|\left\{(t,x)\in Q_T:1>|\phi^N(t,x)|>1-\delta\right\}\right|\leqslant ca_\delta^{-1}
		\end{equation}
		for any $\delta\in(0,1)$. Taking into account assumption \eqref{PsiReg}$_1$ it follows that $a_\delta\to\infty$ as $\delta\to 0_+$. Hence the limit passage $\delta\to 0_+$ in \eqref{DeltaIneq} yields 
		\begin{equation*}
		|\{(t,x)\in Q_T:|\phi(t,x)|=1\}|=0,
		\end{equation*}
		in other words $|\phi|<1$ a.e.\  in $Q_T$. This bound, the pointwise convergence $\phi^N\to\phi$ from \eqref{PhiInterpConv} and the assumed regularity $\widetilde\Psi'_0\in C^1((-1,1))$, cf.\ Assumption 1.1, imply \eqref{PointConvComp}. Having \eqref{PointConvComp} and the bound on $\{\widetilde\Psi'_0(\phi^N)\}$ from \eqref{boundphiPsi} at hand we apply the Vitali convergence theorem to conclude that $\widetilde\Psi'_0(\phi^N)\to \widetilde\Psi'_0(\phi)$ in $L^1(Q_T)$. Hence we have $\zeta=\widetilde\Psi'_0(\phi)$ and \eqref{PsiPhiNCnv} is proved.\\
		The convergence \eqref{PsiPhiNCnv} along with the fact that $\phi\in[-1,1]$ and \eqref{deftpsi}-\eqref{deftpsi0} in particular imply that $\Psi'(\phi)\in L^{2}(0,T;L^{\frac{2p}{p+2}}(\Omega)).$
	\subsubsection{The energy inequality for the weak solution}\label{energyweak}
	This section is devoted to the proof of the fact that the quadruple $(v,M,\phi,\mu)$ obtained as limits of interpolants (we refer to \eqref{InterpWeakCon}) satisfies \eqref{EnIneq*}
	for all $t\in(0,T),$ where $E_{tot}$ is as defined in \eqref{defEtot}. To this end we take into account \eqref{InterpWeakCon}, \eqref{PhiInterpStrongComp}, \eqref{MInterpStrongComp}, \eqref{RhoInterpConv}, \eqref{strngconvL24} and \eqref{MNStrongly} and select subsequences that will not be relabeled such that for a.e.\  $t\in(0,T)$
	\begin{equation}\label{NConvergencesAETime}
		\begin{alignedat}{2}
			\rho^N(t)&\to\rho(t)&&\text{ in }L^2(\Omega),\\
			v^N(t)&\to v(t)&&\text{ in }L^4(\Omega),\\
			\nabla M^N(t)&\to \nabla M(t)&&\text{ in }L^2(\Omega),\\
			M^N(t)&\to M(t)&&\text{ in }L^4(\Omega),\\
			\nabla\phi^N(t)&\rightharpoonup\nabla \phi(t)&&\text{ in }L^2(\Omega),\\
			\phi^N(t)&\to\phi(t)&&\text{ in }L^2(\Omega)\text{ and a.e.\  in }\Omega.
		\end{alignedat}
	\end{equation}
	We want to show that for a.e.\  $t\in(0,T)$
	\begin{equation}\label{EtotWLSC}
		E_{tot}(v(t),M(t),\phi(t))\leqslant \liminf_{N\to\infty}E_{tot}(v^N(t),M^N(t),\phi^N(t)).
	\end{equation}
	We argue as in \cite[Section 5.2, p.~28--29]{KMS20} and focus only on the terms from $E_{tot}(v^N(t),M^N(t),\phi^N(t))$ that are not treated in \cite{KMS20}. We fix $t\in(0,T)$, in which convergences from \eqref{NConvergencesAETime} are available. By \eqref{NConvergencesAETime}$_{1,2}$ we get
	\begin{equation*}
		\lim_{N\to\infty}\frac{1}{2}\int_\Omega \rho^N(t)|v^N(t)|^2=\frac{1}{2}\int_\Omega \rho(t)|v(t)|^2.
	\end{equation*}
	Since $\widetilde{\Psi}_{0}\in C([-1,1]),$ we obtain by using \eqref{NConvergencesAETime}$_{6}$ and dominated convergence theorem
	\begin{equation*}
		\lim_{N\to\infty}\int_\Omega\widetilde \Psi(\phi^N(t))=\lim_{N\to\infty}\int_\Omega\left(\widetilde \Psi_0(\phi^N(t))-\frac{\kappa}{2}(\phi^N(t))^2\right)=\int_\Omega\left(\widetilde \Psi_0(\phi(t))-\frac{\kappa}{2}(\phi(t))^2\right)=\int_\Omega\widetilde \Psi(\phi(t)).
	\end{equation*}
	The remaining details for the proof of \eqref{EtotWLSC} can be found in \cite[Section 5.2, p.~28--29]{KMS20}. Applying the convergences from \eqref{InitMPhiApp} we conclude $E_{tot}(v_0,M^N_0,\phi^N_0)\to E_{tot}(v_0,M_0,\phi_0)$ in a straightforward way. Hence to conclude \eqref{EnIneq*} it suffices to combine \eqref{EtotWLSC}, the fact that
	$$\displaystyle\sqrt{2\nu(\phi^{N}_{h})}\mathbb{D}(v^{N})\rightharpoonup \sqrt{2\nu(\phi)}\mathbb{D}(v)\,\,\mbox{in}\quad L^{2}(Q_{T}),$$
	which can be proved as in \cite[eq. (5.57)]{KMS20},
	the weak lower semicontinuity of norms with \eqref{InterpWeakCon}$_{6}$ and \eqref{SeqDvrConv}.
	\subsubsection{Continuity with respect to time of $v,M,\phi$}
	This section aims to show that some of the limit functions obtained in previous sections are continuous w.r.t. time variable in a certain sense. Namely, we show
	\begin{equation}\label{TContWeak}
		\begin{split}
			\rho v\in &C_w([0,T];L^2(\Omega)),\\
			v\in &C_{w}([0,T];L^2(\Omega)),\\
			M\in &C_w([0,T];W^{1,2}(\Omega)),\\
			M\in &C([0,T];L^2(\Omega)),\\
			\phi\in &C_w([0,T];W^{1,2}(\Omega)),\\
			\phi\in &C([0,T];L^2(\Omega)).
		\end{split}
	\end{equation}
	First, for the proof of \eqref{TContWeak}$_{3,4,5,6}$ we refer to \cite[Section 5.3, p.~29]{KMS20}. Let us next prove \eqref{TContWeak}$_{1}.$ As $\rho\in L^\infty(Q_T)$ and $v\in L^\infty(0,T;L^2(\Omega))$, we have 
	\begin{equation}\label{rvLinf}
	\begin{array}{l}
	\displaystyle\rho v\in L^\infty(0,T;L^2(\Omega)).
	\end{array}
	\end{equation}
	Next in view of \eqref{DiffQRVNBound} one has up to a nonrelabeled subsequence
	\begin{equation*}
	\begin{array}{l}
	\displaystyle\tder\mathbb{P}_{\dvr}(\widetilde{\rho v}^N)\rightharpoonup \tder\mathbb{P}_{\dvr}(\rho v)\,\,\mbox{in}\,\, L^\frac{8}{7}(0,T;(V(\Omega))')
	\end{array}
	\end{equation*}
	(where the identification of the limit follows from \eqref{WTRhoVWeakly}). Then the fact that $\tder\mathbb{P}_{\dvr}(\rho v)\in L^\frac{8}{7}(0,T;(V(\Omega))')$ implies $\mathbb{P}_{\dvr}(\rho v)\in C([0,T];(V(\Omega))')$. This along with \eqref{rvLinf} renders 
	\begin{equation}\label{Cwrhov}
	\begin{array}{l}
	\mathbb{P}_{\dvr}(\rho v)\in C_{w}([0,T];L^{2}_{\dvr}(\Omega))
	\end{array}
	\end{equation}
	by using \cite[Ch. III, Lemma 1.4]{Tem77}.\\
	Next using the definition \eqref{Leray1}--\eqref{Leray} of the Leray projector $\mathbb{P}_{\dvr}$ we write
	\begin{equation}\label{rhov}
	\begin{array}{l}
	\displaystyle\rho v=\mathbb{P}_{\dvr}(\rho v)+\nabla p,
	\end{array}
	\end{equation}
	where $p(t)\in W^{1,2}(\Omega),$ $\displaystyle\int_{\Omega} p(t)=0$ and $p(t)$ solves the weak Neumann problem \eqref{Leray}. Now one can follow the arguments used in \cite[Section 5.2, p.~475--476]{AbDeGa113} to show that $\nabla p\in C_{w}([0,T];L^{2}(\Omega)).$ This along with \eqref{Cwrhov} furnishes the proof of \eqref{TContWeak}$_{1}.$\\
	Finally, we wish to show \eqref{TContWeak}$_{2}.$ By definition one needs to prove $v(\cdot,t_{n})\rightharpoonup v(\cdot,t)$ in $L^{2}(\Omega)$ for any sequence $\{t_n\}\subset[0,T]$ such that $t_{n}\rightarrow t.$ In view of the non-degeneracy of $\rho,$ one first infers from \eqref{rhov} $$v(\cdot,t)=\frac{1}{\rho(\cdot,t)}\mathbb{P}_{\dvr}(\rho v)(\cdot,t)+\frac{1}{\rho(\cdot,t)}\nabla p(\cdot,t),$$
	(with this definition one also defines $v$ in a set of measure zero, so that $v$ is defined everywhere in $[0,T]$)
	 uses \eqref{TContWeak}$_{1},$ $\nabla p\in C_{w}([0,T];L^{2}(\Omega))$ and the fact that $\rho\in C([0,T];L^{2}(\Omega))$ (which follows from \eqref{TContWeak}$_{6}$) to show that $v(\cdot,t_{n})\rightharpoonup v(\cdot,t)$ in $L^{1}(\Omega).$ Finally, since $v(\cdot,t_{n})$ is uniformly bounded in $L^{2}(\Omega),$ one concludes that $v(\cdot,t_{n})\rightharpoonup v(\cdot,t)$ in $L^{2}(\Omega)$ and thereby finishing the proof of \eqref{TContWeak}$_{2}.$
	\subsection{Recovering the weak formulations}\label{recoveringweak}
	In this section we verify that the quadruple $(v,M,\phi,\mu)$ satisfies the formulation of the problem in the sense of Definition~\ref{DefWS} by performing the limit passage $N\to\infty$ in \eqref{MomEqInterp}--\eqref{EqMuInterp}. We start with the momentum equation. We consider a fixed $\psi_1\in C^1_c([0,T);V(\Omega))$ in \eqref{MomEqInterp}. Since $\widetilde{\rho v}^{N}$ is bounded in $L^{\infty}(0,T;L^{2}(\Omega))$, which follows from \eqref{CInterpolantsBound}$_{1}$ and \eqref{RhoNBound}, we have 
	\begin{equation}\label{TRVTimeConv}
		\mathbb{P}_{\dvr}(\widetilde {\rho v}^N(t))\rightharpoonup\mathbb{P}_{\dvr}(\rho v(t))\text{ in }L^2(\Omega)\text{ for a.e.\  }t\in(0,T),
	\end{equation}
	where the weak limit in \eqref{TRVTimeConv} is identified by using \eqref{WTRhoVWeakly}. Fixing $\tau\in(0,T)$ such that \eqref{TRVTimeConv} holds we take into consideration that $\partial^-_{t,h}\rho^N v^N=\partial_t\widetilde{\rho v}^N$ by \eqref{AffInterpProp}$_1$ and integrate by parts with respect to time in \eqref{MomEqInterp}  to obtain
	\begin{equation*}
		\begin{split}
			&\int_\Omega\widetilde{\rho v}^N(\tau)\psi_1(\tau)-\int_\Omega\widetilde{\rho v}^N(0)\psi_1(0)+\int_0^\tau\left(\int_{\Omega}- \widetilde{\rho v}^N\cdot\tder\psi_{1}-\int_{\Omega}(\rho^N_hv^N\otimes v^N)\cdot\nabla{\psi}_{1}-\int_{\Omega}v^N\otimes J^N\cdot \nabla\psi_{1}\right.\\
			&\left.+\int_{\Omega}\left(\dvr(\xi(\phi^N_h)\nabla M^N)-\frac{{\xi(\phi^N_h)}}{\alpha^{2}}(|M^N|^{2}M^N-M^N_h)\right)\nabla M^N\cdot\psi_{1}\right)\\
			&=\int_0^\tau\left(-2\int_{\Omega}\nu(\phi^{N}_{h})\mathbb{D} v^N\cdot\mathbb{D}\psi_{1}-\int_{\Omega}\nabla\mu^N\phi^N_h\cdot\psi_{1}\right).
		\end{split}
	\end{equation*}
	Thanks to \eqref{TRVTimeConv} and the definition \eqref{Leray1}--\eqref{Leray} of the Leray projector, we pass to the limit in the first term on the left hand side of the latter identity. By the definition of $\widetilde{\rho v}^N(0)$ we have, employing also \eqref{InitMPhiApp}$_2$,
	\begin{equation*}
	\begin{split}
		\widetilde{\rho v}^N(0)=\rho^N(-h)v^N(-h)&\displaystyle=\tfrac{1}{2}\left(\widetilde \rho_1+\widetilde\rho_2+(\widetilde\rho_2-\widetilde\rho_1)\phi^N_0\right)v_0\\
		&\to \tfrac{1}{2}\left(\widetilde \rho_1+\widetilde\rho_2+(\widetilde\rho_2-\widetilde\rho_1)\phi_0\right)v_0=\rho_0v_0\text{ in }L^1(\Omega),
		\end{split}	
	\end{equation*}
	which allows us to perform the passage in the second term. To pass to the limit in the third term we use \eqref{WTRhoVWeakly} and the definition \eqref{Leray1}--\eqref{Leray} of the Leray projector. We perform the limit passage in the fourth term with the help of \eqref{RhoInterpConv} and \eqref{VNStrongly}. We recall that $\displaystyle J^N=-\frac{\widetilde\rho_2-\widetilde\rho_1}{2}\nabla\mu^N$. Hence combining the convergences \eqref{InterpWeakCon}$_6$ and \eqref{VNStrongly} ensures the limit passage in the fifth term. For the limit passage in the last term on the left hand side we use \eqref{SeqDvrConv} and \eqref{MNStrongly}. The limit passage on the right hand side is ensured by 
	$$\displaystyle {\nu(\phi^{N}_{h})}\mathbb{D}(v^{N})\rightharpoonup {\nu(\phi)}\mathbb{D}(v)\,\,\mbox{in}\quad L^{2}(Q_{T}),$$
	whose proof can be found in \cite[eq. (5.56)]{KMS20}
	and \eqref{InterpWeakCon}$_6$ combined with \eqref{PhiInterpStrongComp}. We arrive at
	\begin{equation}
		\begin{split}
			&\int_\Omega\rho v(\tau)\psi_1(\tau)-\int_\Omega\rho v(0)\psi_1(0)+\int_0^\tau\left(\int_{\Omega}-\rho v\cdot\tder\psi_{1}-\int_{\Omega}(\rho v\otimes v)\cdot\nabla{\psi}_{1}-\int_{\Omega}v\otimes J\cdot \nabla\psi_{1}\right.\\
			&\left.+\int_{\Omega}\left(\dvr(\xi(\phi)\nabla M)-\frac{{\xi(\phi)}}{\alpha^{2}}(|M|^{2}M-M)\right)\nabla M\cdot\psi_{1}\right)\\
			&=\int_0^\tau\left(-2\int_{\Omega}\nu(\phi)\mathbb{D} v\cdot\mathbb{D}\psi_{1}-\int_{\Omega}\nabla\mu\phi\cdot\psi_{1}\right).
		\end{split}
	\end{equation}
	Next we consider $t\in(0,T)$ and a sequence $\{\tau^k\},$ s.t. $\tau^k\to t$ and the latter identity holds for $\tau=\tau^k$. Employing \eqref{TContWeak}$_1$ and the fact that all terms under the integration sign over the time interval are integrable with respect to time we conclude \eqref{WeakForm}$_1$ by the limit passage $k\to \infty$. 
	
	We note that the validity of identities \eqref{WeakForm}$_{2,3}$ (by the limit passage in \eqref{intidentity2} and \eqref{intidentity3}) can be proved by following line by line the arguments used to show \cite[$(2.4)_{2}$ and $(2.4)_{3}$]{KMS20} in \cite[Section 5.4, p.~31]{KMS20}. In order to verify that \eqref{WeakForm}$_4$ is fulfilled, we pass to the limit $N\to\infty$ in \eqref{EqMuInterp}. In view of the convergences \eqref{InterpWeakCon}$_{6}$, \eqref{PhiInterpStrongComp}, \eqref{MNStrongly} and \eqref{MInterpStrongComp} we conclude 
	\begin{align*}
	   &\mu^N+\frac{\kappa}{2}(\phi^N+\phi^N_h)-H_0(\phi^N,\phi^N_h)\frac{|\nabla M^N|^{2}}{2}-\frac{H_0(\phi^N,\phi^N_h)}{4\alpha^{2}}(|M^N|^{2}-1)^{2}\\&\rightharpoonup \mu+\kappa\phi-\xi'(\phi)\frac{|\nabla M|^{2}}{2}-\frac{\xi'(\phi)}{4\alpha^{2}}(|M|^{2}-1)^{2} \text{ in }L^1(Q_T).
	\end{align*}		 
	Indeed, the passage to the limit in the first two terms is straightforward and the $L^1$ weak convergence of the remaining two terms is explained in detail in \cite[Section 5.4, p.~32]{KMS20}. For the limit passage in the terms on the right hand side of \eqref{EqMuInterp} we use the convergence
	\begin{equation*}  
	-\eta\Delta \phi^N+\widetilde \Psi'_0(\phi^N)\rightharpoonup -\eta\Delta \phi+\widetilde \Psi'_0(\phi)\text{ in }L^1(Q_T),
	\end{equation*}
	which follows by \eqref{PhiNW2Cnv} and \eqref{PsiPhiNCnv}. Thus we arrive at 
	  	\begin{equation*}
			\int_0^T\int_\Omega\left(\mu+\kappa\phi-\xi'(\phi)\frac{|\nabla M|^{2}}{2}-\frac{\xi'(\phi)}{4\alpha^{2}}(|M|^{2}-1)^{2}\right)\psi_4=\int_0^T\int_\Omega\left(-\eta\Delta\phi+\widetilde{\Psi}'_{0}(\phi)\right)\psi_4
	\end{equation*}
	for all $\psi_4\in L^\infty(0,T;L^\infty(\Omega))$. Hence it follows that identity \eqref{WeakForm}$_4$ is fulfilled.
		
	\subsection{The attainment of initial data \texorpdfstring{$v_0, M_0, \phi_0$}{InitC}}\label{attainmentinitial}
	In this section, we prove \eqref{InitDataAtt} with the help of \eqref{WeakForm}, which we proved in the previous section. First we show the following identities
	\begin{equation}\label{InitValIdent}
		\begin{alignedat}{2}
			v(0)=&v_0&&\text{ a.e.\ in }\Omega,\\
			M(0)=&M_0&&\text{ a.e.\ in }\Omega,\\
			\phi(0)=&\phi_0&&\text{ a.e.\ in }\Omega.
		\end{alignedat}
	\end{equation}
	Setting $\psi_3(t,x)=\theta(t)\vartheta(x)$ in \eqref{WeakForm}$_3$, where $\theta\in C^1_c([0,T))$ with $\theta(0)>0$ and $\vartheta\in C^{\infty}_{c}(\Omega)$ are arbitrary but fixed, we obtain using \eqref{TContWeak}$_{5}$
	\begin{equation*}
		\int_\Omega \phi_{0}\theta(0)\vartheta=\lim_{t\to 0_+}\int_\Omega \phi(t)\theta(t)\vartheta=\int_\Omega \phi(0)\theta(0)\vartheta,
	\end{equation*}
	which implies \eqref{InitValIdent}$_3.$	Setting $\psi_1(t,x)=\theta(t)\omega(x)$ in \eqref{WeakForm}$_1$, where $\theta\in C^1_c([0,T))$ with $\theta(0)>0$ and $\omega\in V(\Omega)$ are arbitrary but fixed, yields
	\begin{equation*}
		\int_\Omega \rho_0v_0\cdot\theta(0)\omega=\lim_{t\to 0_+}\int_\Omega \rho(t)v(t)\cdot\theta(t)\omega=\int_\Omega \rho(0)v(0)\cdot\theta(0)\omega,
	\end{equation*}
	where the second equality follows by \eqref{TContWeak}$_{1}$ and \eqref{InitValIdent}$_3$ implies $\rho(0)=\rho_0$. Setting in the latter identity $\omega=v_0-v(0)$, which is allowed due to the density of $V(\Omega)$ in $L^2_{\dvr}(\Omega)$, implies \eqref{InitValIdent}$_1$. We note that the fact that $\rho_0$ has a positive lower bound was also used. Finally, we repeat the above arguments to justify \eqref{InitValIdent}$_2$.
	
	With the help of \eqref{TContWeak} we will show that the energy inequality \eqref{EnIneq*} holds for all $t\in[0,T]$. We start by considering an arbitrary $t\in[0,T]$ and a sequence $\{t^k\}$ such that $t^k\geq t$, $t^k\to t$ as $k\rightarrow\infty$ and 
	\begin{equation}\label{tkconv}
		\begin{alignedat}{2}
			\rho(t^k)v(t^k)&\rightharpoonup \rho(t)v(t)&&\mbox{ in }L^2(\Omega),\\
			v(t^k)&\rightharpoonup v(t)&&\mbox{ in }L^2(\Omega),\\
			M(t^{k})&\rightharpoonup M(t)&&\mbox{ in } W^{1,2}(\Omega),\\
			M(t^{k})&\rightarrow M(t)&&\mbox{ in }L^{2}(\Omega),\\
			\phi(t^{k})&\rightharpoonup \phi(t)&&\mbox{ in }W^{1,2}(\Omega),\\
			\phi(t^{k})&\rightarrow \phi(t)&&\mbox{ in }L^{2}(\Omega)\text{ and a.e.\  in }\Omega,\\
			\rho(t^k)&\rightarrow \rho(t)&&\mbox{ in }L^{2}(\Omega)\text{ and a.e.\  in }\Omega
		\end{alignedat}
	\end{equation}
	and \eqref{EnIneq*} holds for each $t^k$. The convergence \eqref{tkconv}$_{7}$ follows from \eqref{tkconv}$_{6}$ by using the definition \eqref{DefRho} of $\rho.$ The existence of such a sequence $\{t^k\}$ is ensured by \eqref{TContWeak}. Because of the convexity of $|\cdot|^2$ (i.e.\  the inequality $|A|^{2}-|B|^{2}\geqslant 2B\cdot(A-B),$ for all $A,B\in\mathbb{R}^{m},$ $m\geqslant 1$) and convergences \eqref{tkconv}$_{1,2,7}$ it follows that
	\begin{equation}\label{1stPartEnLSC}
	\begin{array}{ll}
		\displaystyle\liminf_{k\to\infty}\frac{1}{2}\int_\Omega\rho(t^k)|v(t^k)|^2 &\displaystyle\geq  \liminf_{k\to\infty}\int_\Omega\left(\frac{1}{2}\rho(t^k)|v(t)|^2+\rho(t^k)v(t)\cdot(v(t^k)-v(t))\right)\\
		&\displaystyle=\frac{1}{2}\int_\Omega\rho(t)|v(t)|^2.
		\end{array}
	\end{equation}
	For the passage to the limit $k\to\infty$ in both terms we have used that $\rho(t^k)v(t)\to \rho(t)v(t)$ in $L^2(\Omega)$ (which follows by using Lebesgue's dominated convergence theorem and the fact that $\rho$ is bounded) and also \eqref{tkconv}$_{2}$ in the second term.
	Due to the weak lower semicontinutiy of convex functionals, the fact that $\widetilde{\Psi}\in C([-1,1])$ and \eqref{tkconv}$_{5,6}$, we obtain
	\begin{equation}\label{2ndPartEnLSC}
		\begin{split}
			&\liminf_{k\to\infty}\int_\Omega\left(\frac{\eta}{2}|\nabla\phi(t^k)|^2+\widetilde{\Psi}(\phi(t^k)) \right)\geq \int_\Omega \left(\frac{\eta}{2}|\nabla\phi(t)|^2+\widetilde{\Psi}(\phi(t)\right).
		\end{split}
	\end{equation}
	Moreover, we obtain
	\begin{equation}\label{3rdPartEnLSC}
		\begin{split}
			&\liminf_{k\to\infty} \int_{\Omega}\left(\xi(\phi(t^k))|\nabla M(t^k)|^2+\frac{\xi(\phi(t^k))}{\alpha^2}(|M(t^k)|^2-1)^2\right)\\
			&\geq \int_{\Omega}\left(\xi(\phi(t))|\nabla M(t)|^2+\frac{\xi(\phi(t))}{\alpha^2}(|M(t)|^2-1)^2\right),
		\end{split}
	\end{equation}
	by arguing as in \cite[(5.77)]{KMS20}. 
	Altogether, \eqref{1stPartEnLSC}, \eqref{2ndPartEnLSC}, \eqref{3rdPartEnLSC} and the absolute continuity of the map
	$$t\mapsto \int_0^t\left(\|\sqrt{2\nu}\mathbb{D} v\|^2_{L^2(\Omega)}+\|\nabla \mu\|^2_{L^2(\Omega)}+\left\|\dvr(\xi(\phi)\nabla M)-\frac{\xi(\phi)}{\alpha^2}M(|M|^2-1)\right\|^2_{L^2(\Omega)}\right)$$
	 imply that \eqref{EnIneq*} holds for all $t\in[0,T]$. Hence in particular it follows that
	\begin{equation}\label{LSIneq}
		\limsup_{t\to 0_+}E_{tot}(v(t),M(t),\phi(t))\leqslant E_{tot}(v_0,M_0,\phi_0).
	\end{equation}
	Employing again \eqref{TContWeak} along with \eqref{InitValIdent} (similarly as we have obtained \eqref{1stPartEnLSC}--\eqref{3rdPartEnLSC}) we deduce
	\begin{equation*}
		\liminf_{t\to 0_+}E_{tot}(v(t),M(t),\phi(t))\geq E_{tot}(v(0),M(0),\phi(0))=E_{tot}(v_0,M_0,\phi_0),
	\end{equation*}
	which along with \eqref{LSIneq} infers
	\begin{equation}\label{LimEn0}
		\lim_{t\to 0_+}E_{tot}(v(t),M(t),\phi(t))= E_{tot}(v_0,M_0,\phi_0).
	\end{equation}
	Taking into account the definition of $E_{tot},$ employing the inequalities $|A|^2-|B|^2\geq 2B\cdot(A-B)+2|A-B|^2$ (which follows from the strong convexity of $|\cdot|^2$) and $|A|^4-|B|^4\geq 4|B|^{2}B\cdot(A-B)$ (which follows from the convexity of $|\cdot|^4$) for all $A,B\in\mathbb{R}^m,$
	one obtains the following for each $t\in(0,T)$
	\begin{equation}\label{ineqdetot}
		\begin{split}
			E&_{tot}(v(t),M(t),\phi(t))-E_{tot}(v_0,M_0,\phi_0)\\
			\geq& \frac{1}{2}\int_\Omega(\rho(t)-\rho_0)|v_0|^2+ \int_\Omega \rho(t)v_0\cdot(v(t)-v_0)+\int_\Omega\rho(t)|v(t)-v_0|^2+\int_\Omega\left(\xi(\phi(t))-\xi(\phi_0)\right)|\nabla M_0|^2\\
			&+\int_\Omega 2\xi(\phi(t))\nabla M_0\cdot(\nabla M(t)-\nabla M_0)+2\int_\Omega \xi(\phi(t))|\nabla M(t)-\nabla M_0|^2\\
			&+\frac{1}{4\alpha^2}\int_\Omega \left(\xi(\phi(t))-\xi(\phi_0)\right)|M_0|^4 +\frac{1}{\alpha^2}\int_\Omega \xi(\phi(t))|M_0|^2M_0\cdot\left(M(t)-M_0\right)\\
			&-\frac{1}{2\alpha^2}\int_\Omega  \left(\xi(\phi(t))|M(t)|^2-\xi(\phi_0)|M_0|^2\right)+\frac{1}{4\alpha^2}\int_\Omega  \left(\xi(\phi(t))-\xi(\phi_0)\right)\\
			&+\eta\int_\Omega\nabla\phi_0\cdot\left(\nabla\phi(t)-\nabla\phi_0\right)+\eta\int_\Omega|\nabla\phi(t)-\nabla\phi_0|^2+\int_{\Omega}\left(\widetilde{\Psi}_{0}(\phi)-\widetilde{\Psi}_{0}(\phi_{0})\right)\\
			&-\frac{\kappa}{2}\int_\Omega \left(\phi^2(t)-\phi_0^2\right)
			=\sum_{m=1}^{14}I_m(t).
		\end{split}
	\end{equation}
	Now we show \eqref{InitDataAtt} by taking the limsup $t\to 0_+$ on both sides of the inequality \eqref{ineqdetot}. We consider an arbitrary sequence $\{t^k\}$ such that $t^k\to 0_+$ as $k\to\infty$. The sequence $\{t^k\}$ has a subsequence $\{t^{k'}\}$ such that the following holds
	\begin{equation}\label{PhiTkcnv}
		\phi(t^{k'})\to\phi_0,\ \text{ a.e.\ in }\Omega\text{ as }k'\to\infty
	\end{equation}
	by \eqref{TContWeak}$_6$ and \eqref{InitValIdent}$_3$. Accordingly, we have
	\begin{equation}\label{RhoTkcnv}
		\rho(t^{k'})\to\rho_0,\ \text{ a.e.\ in }\Omega\text{ as }k'\to\infty
	\end{equation}
	by \eqref{DefRho}. For the proof of
	\begin{equation}\label{LimImI}
		\lim_{k^{'}\to\infty}I_m(t^{k'})=0\text{ for }m=4,5,7,8,9,10,11
	\end{equation}
	we refer to \cite[(5.82)-(5.85)]{KMS20}. Next we deal with $I_1$, $I_2$, $I_{13}$ and $I_{14}$. Convergence \eqref{RhoTkcnv} and the fact that $\rho$ is a bounded function imply
	\begin{equation}\label{LimI1}
		\lim_{k'\to\infty}I_1(t^{k'})=0
	\end{equation}
	by the Lebesgue dominated convergence theorem. Moreover, we have that $\rho(t^{k'})v_0\to \rho_0v_0$ in $L^2(\Omega)$, which along with \eqref{TContWeak}$_{2}$ and \eqref{InitValIdent}$_1$ yields
	\begin{equation}\label{LimI2}
		\lim_{k'\to\infty}I_2(t^{k'})=0.
	\end{equation}
	Since $\widetilde{\Psi}_{0}\in C([-1,1]),$ the following
	\begin{equation}\label{LimI13}
		\lim_{k'\to\infty}I_{13}(t^{k'})=0
	\end{equation}
	is obtained as an immediate consequence of \eqref{PhiTkcnv} and the Lebesgue dominated convergence theorem.\\
	Finally, by \eqref{TContWeak}$_{6}$ and \eqref{InitValIdent}$_3$ we obtain
	\begin{equation}\label{LimI14}
		\lim_{k'\to\infty}I_{14}(t^{k'})=0.
	\end{equation}	  
	Hence 
	\begin{equation}\label{limsupinq}
		\limsup_{k '\to\infty} \left(\underline{\rho}\|v(t^{k'})-v_0\|^2_{L^2(\Omega)}+c_1\|\nabla M(t^{k'})-\nabla M_0\|^2_{L^2(\Omega)}+\eta\|\nabla\phi(t^{k'})-\nabla\phi_0\|^2_{L^2(\Omega)}\right)\leqslant 0
	\end{equation}
	follows from \eqref{ineqdetot} by \eqref{LimImI}--\eqref{LimI14} provided that we apply \eqref{XiAssum}$_2$ and take into consideration that there is a positive lower bound on $\rho$, which we denote by $\underline{\rho}.$\\
	The inequality \eqref{limsupinq} along with \eqref{TContWeak}$_{4,6}$ infer 
	\begin{equation}\label{LimTkP}
		\lim_{k '\to\infty}\left(\|v(t^{k'})-v_0\|_{L^2(\Omega)}+\|M(t^{k'})- M_0\|_{W^{1,2}(\Omega)}+\|\phi(t^{k'})-\phi_0\|_{W^{1,2}(\Omega)}\right)=0.
	\end{equation} 
	Since $\{t^{k}\}$ is an arbitrary sequence possessing a subsequence satisfying \eqref{LimTkP}, one concludes the proof of \eqref{InitDataAtt}. 
	\subsection{Attainment of the boundary condition and some regularity results for $M$ in Lebesgue spaces}\label{bndryregM} In this section we discuss the proofs of the items $(ii)$ and $(iii)$ of Theorem~\ref{Thm:Main}. For the proof of the item $(ii)$ we refer the readers to \cite[Section 6.1]{KMS20}. The item $(iii)$ was formally commented in \cite[Section 6.2]{KMS20} but one needs to suitably regularize the magnetization equation to make the arguments concrete. Here we provide the details for the proof of item $(iii).$\\
	In the direction of proving item $(iii)$ of Theorem~\ref{Thm:Main}, we first show that for given $v$ and $\phi$ in the functional settings \eqref{functionalspaces}$_{1,3}-$\eqref{AddReg}$_{2}$ there is a unique $M$ satisfying \eqref{functionalspaces}$_{2}-$\eqref{AddReg}$_{1}$ and solving the weak formulation \eqref{WeakForm}$_{2}$ of the magnetization equation.  
	 Since $\partial_{t}M\in L^{2}(0,T;L^{\frac{3}{2}}(\Omega)),$ equation \eqref{WeakForm}$_{2}$ can be rewritten as:
	 \begin{equation}\label{weakformM}
	 	\begin{array}{ll}
	 		\displaystyle\int_0^t\int_\Omega\biggl(\partial_{t}M+(v\cdot\nabla) M\biggl)\cdot\psi_{2}=-\int_0^t\int_\Omega \xi(\phi)\nabla M\cdot\nabla\psi_{2}-\int_0^t\int_\Omega\frac{1}{\alpha^2}\bigl(\xi(\phi)(|M|^2-1)M\bigr)\cdot\psi_{2}
	 	\end{array}
	 \end{equation}
	 for $t\in(0,T)$ and $\psi_{2}\in C^{1}_{c}(0,T;W^{1,2}(\Omega))$ or equivalently
	 \begin{equation}\label{equiweakM}
	 \begin{array}{ll}
	 	\displaystyle \int_\Omega \partial_{t}M\cdot\psi_{2}+\int_\Omega(v\cdot\nabla) M\cdot\psi_{2}=-\int_\Omega \xi(\phi)\nabla M\cdot\nabla\psi_{2}-\int_\Omega\frac{1}{\alpha^2}\bigl(\xi(\phi)(|M|^2-1)M\bigr)\cdot\psi_{2}
	 \end{array}
	 \end{equation}
	 for a.e.\ $t\in(0,T)$ and $\psi_{2}\in W^{1,2}(\Omega).$\\
	 Let $M_{1}$ and $M_{2}$ belong to \eqref{functionalspaces}$_{2}-$\eqref{AddReg}$_{1}$ and solve \eqref{weakformM} with $v$ and $\phi$ in the framework \eqref{functionalspaces}$_{1,3}-$ \eqref{AddReg}$_{2}.$ One can now take the difference of the equations solved by $M_{1}$ and $M_{2}$ and consider $(M_{1}-M_{2})$ as a test function, which is possible since $C^{1}_{c}(0,T;W^{1,2}(\Omega))$ is dense in $L^{2}(0,T;W^{1,2}(\Omega)).$ Consequently using the incompressibility of $v$ and the inequality $\bigl(|M_1|^2M_1-|M_2|^2M_2\bigr)\cdot(M_1-M_2)\geqslant 0$ (since the map $\alpha\mapsto |\alpha|^{2}\alpha$ is monotone) one furnishes 
	 $$\frac{1}{2}\|(M_{1}-M_{2})(t)\|^{2}_{L^{2}(\Omega)}\leqslant C\int_{0}^{t}\|M_{1}-M_{2}\|^{2}_{L^{2}(\Omega)},$$
	 for a.e.\ $t\in(0,T).$ Hence by the Gr\"{o}nwall inequality one at once renders that $M_{1}=M_{2}$ a.e.\ in $Q_{T}.$\\
	 Now we plan to use test functions of the form $|M|^{r-2}M$ with $r>2$ in \eqref{equiweakM}. But due to the lack of regularity (particularly one needs for a.e.\ $t\in (0,T),$ $M\in L^{r-1}(\Omega)$ for arbitrary $r>2$) this does not qualify as a test function. Instead we consider a regularized magnetization equation, i.e.\ we first take a sequence $\{\phi^{m}\}_{m}$ in $L^{2}(0,T;C^{\infty}(\overline{\Omega}))$ such that 
	 $$\phi^{m}\rightarrow \phi\,\,\mbox{in}\,\, L^{2}(Q_{T})$$
	 (such a sequence can easily be constructed by a suitable argument involving cut-off and convolution by mollifiers). Now let $M^{m}$ be the weak solution to \eqref{weakformM} or \eqref{equiweakM} corresponding to $\phi^{m}$ with boundary condition $\partial_{n}M^{m}\mid_{\Sigma_{T}}=0$ and initial condition $M^{m}(\cdot,0)=M_{0}\in W^{1,2}(\Omega).$ Our idea is to consider $|M^{m}|^{r-2}M^{m}$ as a test function in the equation solved by $(\phi^{m},M^{m})$ thereby proving an uniform estimate of $M^{m}$ in $L^{r}(\Omega)$ and next pass $m\rightarrow\infty$ to construct a weak solution $M$ corresponding to $\phi$ for \eqref{weakformM} or equivalently \eqref{equiweakM} which also solves the desired $L^{r}(\Omega)$ estimate. Of course, because of the uniqueness of the solution of the magnetization equation corresponding to the fixed pair $(v,\phi)$ and the initial data $M_{0},$ which we have already proved, this process will give the same $M$ solving \eqref{WeakForm}$_{2}.$\\
	 With the help of a time discretization scheme one can prove the existence of a weak solution $M^{m}\in L^{\infty}(0,T;W^{1,2}(\Omega))\cap W^{1,2}(0,T;L^{\frac{3}{2}}(\Omega))$ of \eqref{weakformM} or equivalently \eqref{equiweakM} corresponding to a vector field $v$ (satisfying \eqref{functionalspaces}$_{1}$) and $\phi^{m}.$  Moreover we notice that, in a strong form, this $M^{m}$ solves
	 	\begin{equation}\label{Mmstrng}
	 	\begin{alignedat}{2}
	 	\Delta M^{m}=&\frac{1}{\xi(\phi^{m})}\Bigl(\partial_{t}M^{m}+(v\cdot\nabla)M^{m}-\xi'(\phi^{m})\nabla M^{m}\cdot\nabla\phi^{m}+\frac{\xi(\phi^{m})}{\alpha^{2}}\bigl(|M^{m}|^{2}-1\bigr)M^{m}\Bigr)\quad&&\text{ in }\Omega,\\
	 	\partial_{n}M^{m}=&0&&\text{ on }\partial\Omega.\\
	 	\end{alignedat}
	 	\end{equation}
	 In view of the fact that $M^{m}\in L^{\infty}(0,T;W^{1,2}(\Omega))\cap W^{1,2}(0,T;L^{\frac{3}{2}}(\Omega))$ the right hand of \eqref{Mmstrng}$_{1}$ can be estimated in $L^{2}(0,T;L^{\frac{3}{2}}(\Omega))$ and hence by standard elliptic regularity $M^{m}\in L^{2}(0,T;W^{2,\frac{3}{2}}(\Omega))\hookrightarrow L^{2}(0,T;L^{r}(\Omega))$ for any $0<r<\infty.$ Hence for a.e.\  $t\in(0,T),$ $|M^{m}|^{r-1}M^{m}(t),$ $r>2$ can be used as a test function in \eqref{equiweakM}. Consequently
	  \begin{equation}\label{Lpmag}
	  \begin{array}{ll}
	  &\displaystyle\frac{1}{r}\partial_{t}\|M^{m}\|^{r}_{L^{r}(\Omega)}+\int_{\Omega}(v\cdot\nabla)M^{m}|M^{m}|^{r-2}M^{m}+\int_{\Omega} \xi(\phi^{m})(r-1)|M^{m}|^{r-2}|\nabla M^{m}|^{2}\\
	  &\displaystyle+\frac{1}{\alpha^{2}}\int_{\Omega}\xi(\phi^{m})|M^{m}|^{r+2}-\frac{1}{\alpha^{2}}\int_{\Omega}\xi(\phi^{m})|M^{m}|^{r}=0.
	  \end{array}
	  \end{equation} 
	  Once again integrating by parts the second term and using that $\mbox{div}\,v=0$ on $\Omega$ one concludes from \eqref{Lpmag} that:
	  \begin{equation}\label{LpbGron}
	  \begin{array}{l}
	  \displaystyle\partial_{t}\|M^{m}\|^{r}_{L^{r}(\Omega)}\leqslant \frac{c_{2}r}{\alpha^{2}}\|M^{m}\|^{r}_{L^{r}(\Omega)},
	  \end{array}
	  \end{equation}
	  where $c_{2}>0$ is the constant appearing in the assumption \eqref{XiAssum}. Now if one assumes $M_{0}\in W^{1,2}(\Omega)\cap L^{r}(\Omega),$ $r>6,$ using Gronwall's inequality one has the following from \eqref{LpbGron}:
	  \begin{equation}\label{afLpGron11}
	  \begin{array}{l}
	  \displaystyle\|M^{m}(t)\|_{L^{r}(\Omega)}\leqslant \|M_{0}\|_{L^{r}(\Omega)}e^{\frac{c_{2}}{\alpha^{2}}t}\quad\mbox{for all}\quad t\in[0,T].
	  \end{array}
	  \end{equation}
	  Additionally if $M_{0}\in L^{\infty}(\Omega),$ one can take the limit $r\rightarrow \infty$ in \eqref{afLpGron11} to conclude that:
	  \begin{equation}\label{afLpGronLin1}
	  \begin{array}{l}
	  \displaystyle\|M^{m}(t)\|_{L^{\infty}(\Omega)}\leqslant \|M_{0}\|_{L^{\infty}(\Omega)}e^{\frac{c_{2}}{\alpha^{2}}t}\quad\mbox{for all}\quad t\in[0,T].
	  \end{array}
	  \end{equation}
	  Now we let $m\rightarrow\infty$ in the equation solved by $(\phi^{m},M^{m}),$ i.e.\ \eqref{weakformM} with $(\phi,M)$ replaced by $(\phi^{m},M^{m}).$ The limit passage in the equation is obtained in a standard way (roughly it consists in showing the weak compactness of $M^{m}$ in $L^{2}(0,T;W^{1,2}(\Omega))$ and next using Aubin-Lions to achieve the strong compactness in $L^{2}(0,T;L^{4}(\Omega))$). Finally in view of the estimates \eqref{afLpGron11} and \eqref{afLpGronLin1}, which are independent of $m,$ one concludes \eqref{afLpGron} and \eqref{afLpGronLin}.
	  
	\subsection{Summary of the proof of Theorem~\ref{Thm:Main}}
	For the sake of the readers we summarize the proof of Theorem~\ref{Thm:Main} with exact references to the sections. 
	\begin{itemize}
	  	\item For the obtainment of the regularities \eqref{functionalspaces} with the exception of $M\in W^{1,2}(0,T;L^{\frac{3}{2}}(\Omega)),$ $\phi\in L^{2}(0,T;W^{2,1}(\Omega))$ and $\Psi'(\phi)\in L^{1}(Q_{T}),$ we refer the readers to \eqref{InterpWeakCon} and \eqref{TContWeak}. One can obtain the $W^{1,2}(0,T;L^{\frac{3}{2}}(\Omega))$ regularity of $M$ simply by estimating $\partial_{t}M\in L^{2}(0,T;L^{\frac{3}{2}}(\Omega))$ by using \eqref{diffviscoelastic*}$_{3}$ and the available regularities for $v$ and $M.$ More precisely $(v\cdot\nabla)M$ can be estimated in $L^{2}(0,T;L^{\frac{3}{2}}(\Omega))$ as in \eqref{MConvTermEst} and the boundedness of $\dvr(\xi(\phi)\nabla M)-\frac{\xi(\phi)}{\alpha^{2}}(|M|^{2}-1)M$ in $L^{2}(Q_{T})$ follows from \eqref{EnIneq*}. The additional $p-$regularities \eqref{AddReg} of $M,$ $\phi$ and $\Psi'(\phi)$ can be found in Section~\ref{Additionalconv} and they are of course stronger than $\phi\in L^{2}(0,T;W^{2,1}(\Omega))$ and $\Psi'(\phi)\in L^{1}(Q_{T})$ (stated as a part of \eqref{functionalspaces}).
	  	\item The weak formulation \eqref{WeakForm} solved by $(v,M,\phi,\mu)$ is proved in Section~\ref{recoveringweak}.
	  	\item The energy estimate \eqref{EnIneq*} is obtained in Section~\ref{energyweak}.
	  	\item The attainment of the initial data in the sense of \eqref{InitDataAtt} is obtained in Section~\ref{attainmentinitial}.
	  	\item The items $(ii)$ and $(iii)$ of Theorem~\ref{Thm:Main} corresponding to the attainment of boundary condition for $M$ and some regularity in Lebesgue spaces are proved in Section~\ref{bndryregM}.
	  \end{itemize}
	  In view of the above items we finally conclude the proof of Theorem~\ref{Thm:Main}.  \hfill $\Box$

\noindent{\textbf{Acknowledgment}.} This work is funded by the Deutsche Forschungsgemeinschaft (DFG, German Research Foundation), grant SCHL 1706/4-2, project number 391682204. S.M. is partially supported by the Alexander von Humboldt foundation. The work of M.K. received funding from the Czech Sciences Foundation (GA\v{C}R), GA19-04243S and in the framework of RVO: 67985840.

\bibliographystyle{plain}

\begin{thebibliography}{50}
	
	
	\bibitem{Abels2007}
	H.~Abels.
	\newblock Diffuse Interface Models for Two-Phase Flows of Viscous, Incompressible Fluids,
	\newblock{Habilitation Thesis},
	https://www.mis.mpg.de/preprints/ln/lecturenote-3607.pdf.
	
	\bibitem{abelsarma}
	H.~Abels. 
	\newblock On a diffuse interface model for two-phase flows of viscous, incompressible fluids with matched densities. 
	\newblock {\em Arch. Rat. Mech. Anal.} 194, 463--506, 2009.
	
		\bibitem{Abels2009}
		H.~Abels.
		\newblock {Existence of weak solutions for a diffuse interface model for viscous,
			incompressible fluids with general densities.}
		\newblock {\em Comm. Math. Phys.} 289, 45--73, 2009.
		
		
		\bibitem{Abels2012}
		H.~Abels.
		\newblock {Strong well-posedness of a diffuse interface model for a viscous, quasi-incompressible two-phase flow.}
		\newblock {\em SIAM J. Math. Anal.} 44(1), 316--340, 2012.
	
	
	    \bibitem{AbelsBreit}
		H.~Abels and D.~Breit.
		\newblock {Weak solutions for a non-Newtonian diffuse interface model with different densities.}
		\newblock {\em Nonlinearity.} 29(11), 3426--3453, 2016.
	
	\bibitem{AbDeGa113}
	H.~Abels, D.~Depner, and H.~Garcke.
	\newblock Existence of weak solutions for a diffuse interface model for
	two-phase flows of incompressible fluids with different densities.
	\newblock {\em J. Math. Fluid Mech.} 15(3), 453--480, 2013.
	
	
	\bibitem{AbDeGa213}
	H.~Abels, D.~Depner, and H.~Garcke.
	\newblock On an incompressible {N}avier-{S}tokes/{C}ahn-{H}illiard system with
	degenerate mobility.
	\newblock {\em Ann. Inst. H. Poincar\'{e} Anal. Non Lin\'{e}aire}
	30(6), 1175--1190, 2013.
	
	\bibitem{AbelsFeireisl}
	H.~Abels and E.~Feireisl.
	\newblock On a diffuse interface model for a two-phase flow of compressible viscous fluids.
	\newblock {\em Indiana Univ. Math. J.} 57(2), 659--698, 2008.
	
	\bibitem{AbelsGrun}
	H.~Abels, H.~Garcke, and G.~Gr\"{u}n.
	\newblock Thermodynamically consistent, frame indifferent diffuse interface models for incompressible two-phase flows with different densities.
	\newblock {\em Math. Models Meth. Appl. Sci.} 22(3), 1150013, 2011.
	
		\bibitem{AbelsWilke}
		H.~Abels and M.~Wilke.
		\newblock {Convergence to equilibrium for the Cahn–Hilliard equation with a logarithmic free energy.}
		\newblock {\em Nonlinear Analysis} 67, 3176--3193, 2007.
	
	\bibitem{anderson} 
	D.~M.~Anderson, G.~B.~McFadden, and A.~A.~Wheeler. 
	\newblock {Diffuse-interface methods in fluid mechanics.}
	\newblock {\em Annual Rev. Fluid Mech.} 30, 139--165, 1998.

	\bibitem{liubenesova}
	B.~Bene\v{s}ov\'{a}, J. Forster, C.~Liu, and A.~Schl\"{o}merkemper.
	\newblock Existence of weak solutions to an evolutionary model for
	magnetoelasticity.
	\newblock {\em SIAM J. Math. Anal.} 50(1), 1200--1236, 2018.
	
	\bibitem{BerLof76}
	J.~Bergh and J.~L\"ofst\"om.
	\newblock {\em Interpolation spaces}.
	\newblock Springer, Berlin-Heidelberg-New York, 1976.
	
	\bibitem{boyer}
	F.~Boyer.
	\newblock Mathematical study of multi-phase flow under shear through order parameter formulation. 
	\newblock {\em Asymptot. Anal.} 20(2), 175--212, 1999.
	
	\bibitem{boyer2}
	F.~Boyer.
	\newblock Nonhomogeneous Cahn–Hilliard fluids. 
	\newblock {\em Ann. Inst. H. Poincar\'{e} Anal. Non Lin\'{e}aire} 18(2), 225--259, 2001.
	
	\bibitem{Cahn} 
	J.~W.~Cahn and J.~E.~Hilliard.
	\newblock Free energy of a nonuniform system. I. Interfacial free energy.
	\newblock{\em J. Chem. Phys.} 28, 258--267, 1958.
		 
	\bibitem{FeireislLaurence}
	  L.~Cherfils, E.~Feireisl, M.~Mich\'{a}lek, A.~Miranville, M.~Petcu, and D.~Pra\v{z}\'ak.
	  \newblock {The compressible Navier-Stokes-Cahn-Hilliard equations with dynamic boundary conditions.}
	  \newblock {\em Math. Models. Methods Appl. Sci.} 29(14), 2557--2584, 2019.
	  
	  
	\bibitem{chipotshafrir}
	M.~Chipot, I.~Shafrir, V.~Valente, and G.~Vergara~Caffarelli.
	\newblock On a hyperbolic-parabolic system arising in magnetoelasticity.
	\newblock {\em J. Math. Anal. Appl.} 352(1), 120--131, 2009.
	
	\bibitem{DebDet95}
	A.~Debussche and L.~Dettori.
	\newblock On the {C}ahn-{H}illiard equation with a logarithmic free energy.
	\newblock {\em Nonlinear Anal.} 24, 1491--1514, 1995.
	
	\bibitem{DiFratta-etal2019}
	G.~Di Fratta, C.~Muratov,  F.~Rybakov, and V.~Slastikov,
	\newblock Variational principles of micromagnetics revisited.
	\newblock {\em SIAM J. Math. Anal.} 52(4), 3580--3599, 2020.
	
	
	\bibitem{ding}
	H.~Ding, P.~D.~M.~Spelt, and C.~Shu.
	\newblock Diffuse interface model for incompressible two-phase flows with large density ratios.
	\newblock {\em J. Comp. Phys.} 22, 2078--2095, 2007.
	
	\bibitem{Frigeri1}
	S.~Frigeri.
	\newblock On a nonlocal Cahn-Hilliard/Navier-Stokes system with degenerate mobility and singular potential for incompressible fluids with different densities.
	\newblock{\em Ann. Inst. H. Poincar\'{e} Anal. Non Lin\'{e}aire} 38(3), 647--687, 2021.
	
	\bibitem{Frigeri2}
	S.~Frigeri.
	\newblock Global existence of weak solutions for a nonlocal model for two-phase flows of incompressible fluids with unmatched densities.
	\newblock{\em Math. Models Methods Appl. Sci} 26(10), 1955--1993, 2016.
	
	
	\bibitem{FriGra12}
	S.~Frigeri and M.~Grasselli.
	\newblock Nonlocal {C}ahn-{H}illiard-{N}avier-{S}tokes systems with singular potentials.
    \newblock{\em Dyn. Partial Differ. Equ.} 9, 273--304, 2012.
	
	\bibitem{GalGrasselli}
	C.~G.~Gal, M.~Grasselli and H.~Wu.
	\newblock Global weak solutions to a diffuse interface model for incompressible two-phase flows with moving contact lines and different densities.
	\newblock{\em Arch. Rational Mech. Anal.} 234, 1--56, 2019.
	
	
	
	\bibitem{GilTr01}
	\newblock D.~Gilbarg and N.~S.~Trudinger.
	\newblock {\em Elliptic partial differential equations of second order}, Reprint of the 1998 edition.
	\newblock Springer-Verlag, Berlin, 2001. 
	
	\bibitem{Giorgini2}
		A.~Giorgini.
		\newblock {Well-posedness of the two dimensional Abels-Garcke-Gr\" un model for two-phase flows with unmatched-densities.}
		\newblock {\em Calc. Var. Partial Differential Equations} 60, 100, 2021.
		
	
	\bibitem{Giorgini}
	\newblock A.~Giorgini, A.~Miranville, and R.~Temam.
	\newblock Uniqueness and regularity for the
		{N}avier-{S}tokes-{C}ahn-{H}illiard system.
	\newblock {\em SIAM J. Math. Anal.} 51(3), 2535--2574, 2019.
	
	\bibitem{gurtin}
	\newblock  M.~E.~Gurtin, D.~Polignone, and J. Vi\~{n}als. 
	\newblock Two-phase binary fluids and immiscible fluids described by an order parameter. 
	\newblock{\em Math. Models Methods Appl. Sci.} 6(6), 815--831, 1996.
	
	\bibitem{hohenberg}
	\newblock P.~C.~Hohenberg and B.~I.~Halperin.
	\newblock  Theory of dynamic critical phenomena. 
	\newblock {\em Rev. Mod. Phys.} 49, 435--479, 1977.
	
	\bibitem{Jiang}
	\newblock J.~Jiang, Y.~Li and C.~Liu.
	\newblock Two-Phase incompressible flows with variable density: an energetic variational approach.
	\newblock {\em Discr. Cont. Dyn. Syst.} 37(6), 3243--3284, 2017.
	
	
	\bibitem{konrad}
	\newblock K.~Gr\"{o}ger.
	\newblock  A $W^{l,p}$--estimate for solutions
		to mixed boundary value problems
		for second order elliptic differential equations.
	\newblock {\em Math. Ann.} 283, 679--687, 1989.
	
	
	\bibitem{KMS20}
	\newblock M.~Kalousek, S.~Mitra, and A.~Schl\"omerkemper.
	\newblock Global existence of weak solutions to a diffuse interface model for magnetic fluids.
	\newblock {\em Nonlinear Anal. Real World Appl.} 59, 1468--1218, 2021.
	
	
	\bibitem{Kurzke}
	\newblock M.~W.~Kurzke.
	\newblock {\em Analysis of boundary vortices in thin magnetic films}.
	\newblock Ph.D. thesis, Fakult\"{a}t f\"{u}r Mathematik und Informatik der Universit\"{a}t Leipzig, 2004. 
	
   \bibitem{Lopes}
   \newblock J.~H.~Lopes and G.~Planas.
   \newblock Well-posedness for a non-isothermal flow of two viscous incompressible fluids.
   \newblock{\em Commun. Pure Appl. Anal.} 17(6), 2455--2477, 2018.
	
	
	
	\bibitem{lowengrub}
	\newblock J.~Lowengrub and L.~Truskinovsky.
	\newblock Quasi-incompressible {C}ahn-{H}illiard fluids and topological transitions.
	\newblock {\em Proc. R. Soc. Lond. A.} 454, 2617--2654, 1998.
	
		\bibitem{Meyers}
		N.~G.~Meyers.
		\newblock {An $L^{p}$--estimate for the gradient of solutions of second order elliptic divergence equations.}
		\newblock {\em  Ann. Scuola Norm. Sup. Pisa} 17, 189--206, 1963.
	
		\bibitem{Nittka}
		R.~Nittka.
		\newblock {	Regularity of solutions of linear second order elliptic and
			parabolic boundary value problems on Lipschitz domains.}
		\newblock {\em J. Differential Equations} 251, 860--880, 2011.
	
	
	
	\bibitem{Nochetto}
	R.~H.~Nochetto, A.~J.~Salgado, and T.~Ignacio.
	\newblock A diffuse interface model for two-phase ferrofluid flows.
	\newblock {\em Comput. Methods Appl. Mech. Engrg.} 309, 497--531, 2016.
	

	
	\bibitem{Ro05}
	\newblock T.~Roub\'{\i}\v{c}ek.
	\newblock {\em Nonlinear partial differential equations with applications.}
	\newblock Birkh\"{a}user Verlag, Basel, 2005.
	
	
	\bibitem{anjazab}
	A.~Schl\"{o}merkemper  and J.~\v{Z}abensk\'{y}.
	\newblock {Uniqueness of solutions for a mathematical model for magneto-viscoelastic flows.}
	\newblock {\em Nonlinearity} 31(3--4), 2989--3012, 2018.
	
	
	\bibitem{shenliu}
	J.~Shen and C.~Liu.
	\newblock {A phase field model for the mixture of two incompressible fluids and its approximation by a {F}ourier-spectral method.}
	\newblock {\em Phys. D} 179(3--4), 211--228, 2003.
	
	\bibitem{shenyang}
	J.~Shen and X.~Yang.
	\newblock Decoupled energy stable schemes for phase-field models of two-phase
	complex fluids.
	\newblock {\em SIAM J. Sci. Comput.} 36(1), B122--B145, 2014.

	\bibitem{star}
	V.~N.~Starovoitov.
	\newblock On the motion of a two-component fluid in the presence of capillary forces.
	\newblock {\em Mat. Zametki.} 62(2),
	293–305 (1997) transl. in {\em Math. Notes} 62(1--2), 244--254, 1997.
	
	
	\bibitem{Tem77}
	\newblock R.~Temam. 
	\newblock {\em Navier-{S}tokes equations. {T}heory and numerical analysis.}
	\newblock Studies in
	Mathematics and its Applications, Vol. 2. North-Holland Publishing Co., Amsterdam-New York-Oxford, 1977.
	
	\bibitem{WeberThesis}
	J.~T.~Weber.
	\newblock {\em Analysis of diffuse interface models for two-phase flows with and without surfactants}.
	\newblock {Ph.D. Thesis}, Fakult\"at f\"ur Mathematik der Universit\"at Regensburg, https://nbn-resolving.org/urn:nbn:de:bvb:355-epub-342471, 2016.
	
	\bibitem{yangmao}
	J.~Yang, S.~Mao, X.~He, X.~Yang, and Y.~Innian He.
	\newblock A diffuse interface model and semi-implicit energy stable finite
	element method for two-phase magnetohydrodynamic flows.
	\newblock {\em Comput. Methods Appl. Mech. Engrg.} 356, 435--464, 2019.
	
	\bibitem{yuefeng}
	P.~Yue, J.~Feng, C.~Liu, and J.~Shen.
	\newblock A diffuse-interface method for simulating two-phase flows of complex
	fluids.
	\newblock {\em J. Fluid Mech.} 515, 293--317, 2004.
	
	
\end{thebibliography}

\end{document}